\documentclass[11pt]{amsart}
\usepackage{amsmath}
\usepackage{bbm}
\usepackage{mathrsfs}
\parskip4pt plus2pt minus2pt

\usepackage[english]{babel}
\usepackage[utf8x]{inputenc}
\usepackage[T1]{fontenc}
\usepackage{wrapfig}

\usepackage[top=3cm,bottom=2cm,left=3cm,right=3cm,marginparwidth=1.75cm]{geometry}

\usepackage{amsmath,amssymb}
\usepackage{graphicx}
\usepackage[colorinlistoftodos]{todonotes}
\usepackage[colorlinks=true, allcolors=blue]{hyperref}
\usepackage{cite}
\usepackage{enumitem} 
\usepackage[all,cmtip]{xy}
\usepackage{tikz-cd}
\usepackage{lmodern}
\usepackage{pgfplots}
\usetikzlibrary{intersections}
\usepackage{relsize}


\newtheorem{theorem}{Theorem}
\newtheorem{proposition}[theorem]{Proposition}
\newtheorem{corollary}[theorem]{Corollary}
\newtheorem{lemma}[theorem]{Lemma}
\newtheorem{definition}{Definition}

\theoremstyle{remark} 
\newtheorem{remark}[]{Remark}
\newtheorem{example}[]{Example}

\usepackage{mathtools} 
\mathtoolsset{showonlyrefs,showmanualtags}
\numberwithin{equation}{section}
\usepackage{amsmath}



\newcommand{\R}{\mathbb{R}}
\newcommand{\Qp}{\mathbb{Q}_p}
\newcommand{\G}{\mathbb{G}}

\newcommand{\C}{\mathbb{C}}
\newcommand{\RP}{\mathbb{R}\mathrm{P}}
\newcommand{\QP}{\mathbb{Q}_p\mathrm{P}}
\newcommand{\Zp}{\mathbb{Z}_p}
\newcommand{\Zn}{\mathbb{Z}/p^n\mathbb{Z}}
\newcommand{\CP}{\mathbb{C}\mathrm{P}}

\newcommand{\la}{\lambda}
\renewcommand{\epsilon}{\varepsilon}


\newcommand{\be}{\begin{equation}}
\newcommand{\ee}{\end{equation}}

\title[Probabilistic enumerative geometry over $p$--adic numbers]{Probabilistic enumerative geometry over $p$--adic numbers: linear spaces on complete intersections}

\author{Rida Ait El Manssour}
\address{MPI-MiS Leipzig, Inselstra{\ss}e 22, 04103 Leipzig, Germany.}
\email{rida.manssour@mis.mpg.de}

\author{Antonio Lerario}
\address{SISSA, Via Bonomea 265, 34136 Trieste, Italy}
\email{lerario@sissa.it}

\begin{document}

\maketitle
\begin{abstract}
We compute the expectation of the number of linear spaces on a random complete intersection in $p$--adic projective space. Here ``random'' means that the coefficients of the polynomials defining the complete intersections are sampled uniformly form the $p$--adic integers. We show that as the prime $p$ tends to infinity the expected number of linear spaces on a random complete intersection tends to $1$. In the case of the number of lines on a random cubic in three-space and on the intersection of two random quadrics in four-space, we give an explicit formula for this expectation.  
\end{abstract}

\section{Introduction}
\subsection{Classical and probabilistic enumerative geometry}
In this paper we deal with the enumerative problem of counting the number of $k$--dimensional projective subspaces (called ``$k$-flats'') on a complete intersection in $n$--dimensional $p$--adic projective space.

If this problem is approached over an algebraically closed field, it is a classic of enumerative geometry and we get a generic answer, e.g. there are $27$ lines on a generic cubic surface in $\CP^3$. At this point the word ``generic'' has its standard meaning from algebraic geometry: the generic object of a family has a property if this property is true for all the elements of the family except possibly for a proper algebraic subset of the family; below we will also exploit the measure--theoretic nature of this notion. Over a non--algebraically closed field, in general, we do not get such a generic answer and the number of solutions depends on the choice of the defining equations for the complete intersection. For instance, on a generic real cubic surface in $\RP^3$ the number of real lines can be either $3, 7, 15$ or $27$ (meaning that all possibilities occur for open sets in the space of cubics). Over the field of $p$--adic numbers, to our knowledge, it is not even clear what these generic possibilities are. In fact a cubic polynomial over $p$--adic fields needs to have a lot of variables ($22$) to ensure that the cubic surface has at least one line (see \cite[Theorem 1.3 and Theorem 1.4]{brandes_dietmann_2020}).

This is a generalization of the problem of counting the number of zeroes of a polynomial of one variable: for the generic polynomial (i.e. for a polynomial whose discriminant is nonzero) the number of zeroes over $\C$ equals the degree of the polynomial; over a non-algebraically closed field (e.g. $\R$ or $\Qp$) it depends on the coefficients of the polynomial.

This motivates a probabilistic approach to the problem: when there is not a single \emph{generic} answer, we can put a probability distribution on the space of polynomials and ask for the \emph{expectation} of the number of solutions to the enumerative question. Over $\C$ this approach gives back the classical generic count.

Clearly the expectation of the number of solutions, in the non--algebraically closed case, depends on the choice of the probability distribution, but there are some distributions which are especially interesting as they have a clear geometric meaning: these are the distributions on the space of polynomials which are invariant under the action of the group of isometries of the projective space by change of variables; following the notation for the real case, we call them \emph{invariant distributions} (we will clarify this notion below).
For these distributions there are no preferred points or directions in the projective space.

\begin{example}Let $\{\xi_\alpha\}_{|\alpha|=d}$ be a family of independent gaussian variables with:
\be\label{eq:gaussian} \xi_{\alpha_0\cdots\alpha_n}\sim N\left(0,\frac{d!}{\alpha_0!\cdots \alpha_n!}\right).\ee Consider the following real polynomial with random coefficients:
\be\label{eq:kostlan} f(x)=\sum_{|\alpha|=d}\xi_{\alpha}\cdot x_0^{\alpha_0}\cdots x_n^{\alpha_n}.\ee
Using \eqref{eq:kostlan} we can turn the space $\R[x_0, \ldots, x_n]_{(d)}$ of real homogeneous polynomials of degree $d$ into a gaussian space, i.e. a space with a gaussian probability distribution. This distribution is called the \emph{Kostlan} distribution. The scaling coefficient for the variances of the gaussian variables $\xi_\alpha$ is what makes the distribution invariant under isometries: for every orthogonal transformation $g\in O(n+1)$ the random polynomial $f\circ R$ obtained by a linear change of variables has the same distribution of $f$. In this way, if we denote by $Z(f_1, \ldots, f_\nu)\subset \RP^n$ the common zero set of the polynomials $f_1, \ldots, f_\nu$, if they are sampled at random as in \eqref{eq:kostlan}, we have the notion of random complete intersection.  The expected cardinality of $Z(f_1, \ldots, f_n)\subset \RP^n$, with each $f_j$ of degree $d_j$ and defined as in \eqref{eq:kostlan}, is $\sqrt{d_1\cdots d_n}$, see \cite{ShSm3}. The expected number of real lines on a random real cubic surface $Z(f)\subset \RP^3$ defined by picking $f$ as in \eqref{eq:kostlan} is $6\sqrt{2}-3$, see \cite{BLLP}.
\end{example}

\begin{remark}In the real case, the probabilistic approach goes back to Kac \cite{kac43}, who computed the expected number of real zeroes of a random polynomial with i.i.d. standard gaussian coefficients. The geometric point of view of invariant distributions was first adopted by Edelman, Kostlan, Shub and Smale \cite{EdelmanKostlan95, ShSm3, EKS}, for counting the expectation of the number of solutions of a system of random equations. The extension of this approach to questions in real enumerative geometry was initiated by the second named author of this paper together with B\"urgisser \cite{PSC} and with Basu, Lundberg and Peterson \cite{BLLP}. In the real gaussian case the invariant distributions were classified by Kostlan \cite{Kostlan95} and, in a recent work \cite{3armoniche}, the first named author of this paper together with Belotti and Meroni provided a closed formula for the expectation of the number of real lines on a real random cubic surface for all the possible invariant distributions. 
\end{remark}
\subsection{The $p$--adic case} Let us now move to the $p$--adic case. We begin by setting up the geometric framework for the enumerative problems of our interest. Given homogeneous polynomials $f_1, \ldots, f_\nu$ with coefficients in $\Zp$ and of degrees $d_1, \ldots, d_\nu\in \mathbb{N}$, we denote by $Z(f_1, \ldots, f_\nu)\subset \QP^n$ their common zero set in the $p$--adic projective space. If the list of degrees $d_1, \ldots, d_\nu$ satisfies
\be\label{eq:codim} \sum_{j=1}^{\nu}\binom{k+d_j}{d_j}=(k+1)(n-k),\ee
 then for the generic choice of the polynomials, the number of $k$-flats on $Z(f_1, \ldots, f_\nu)$ is finite, see \cite[Th\'eor\`eme 2.1]{debarremanivel}.
 
As we already observed, the number of solutions to our enumerative problem strongly depends on the coefficients of the polynomials. For instance, already in the case $\nu=1$, $k=0$, $d=3$ and $n=1$, the possibilities for the number of $p$--adic zeroes in $\QP^1$ of a cubic polynomial $f\in \Zp[x_0, x_1]_{(3)}$ with nonzero discriminant are $0, 1$ or $3$ (note the difference with the Real case). It is therefore natural to approach this problem from the random point of view, and ask for the expectation of this number.
 
\begin{remark} Before turning to the probabilistic side, let us briefly explain the meaning of the condition \eqref{eq:codim}. We denote by $\Qp\G_{k,n}$ the Grassmannian of $k$-flats in $\QP^n.$ Every element $\ell\in \Qp\G_{k,n}$ can be seen as the projectivization $\ell=P(L)$ of a vector space $L\simeq \Qp^{k+1}\subset \Qp^{n+1}.$ The dimension of $\Qp\G_{k,n}$ is $(k+1)(n-k)$ (the right hand side of \eqref{eq:codim}). We denote by $\tau_{k,n}$ the the tautological vector bundle on $\Qp\G_{k,n}$: 
\be \tau_{k,n}=\{(P(L), v)\in \Qp\G_{k,n}\times \Qp^{n+1}\,|\,v\in L\}.\ee
The dual of this bundle is denoted by $\tau_{k,n}^*$: the fiber of the dual bundle over a point $\ell=P(L)$ is the set of linear functions on $L.$ For every $d\in \mathbb{N}$ we denote by $\mathrm{Sym}^{(d)}(\tau_{k,n}^*)$ the $d$--th symmetric power of $\tau_{k,n}^*$: the fiber of $\mathrm{Sym}^{(d)}(\tau_{k,n}^*)$ over a point $\ell=P(L)$ is the set of homogeneous polynomial functions of degree $d$ on $L.$  Notice that every $f\in \Zp[x_0, \ldots, x_n]_{(d)}$ gives rise to a section $\sigma_f$ of $\mathrm{Sym}^{(d)}(\tau_{k,n}^*)$, defined by $\sigma_f(\ell)=f|_{L}$. 

Given the list of degrees $d_1, \ldots, d_\nu$ we can consider the following vector bundle, with corresponding section:
 \be \label{eq:bundles}
\begin{tikzcd}
\bigoplus_{j=1}^{\nu}\Qp^{\binom{d_j+k}{d_j}} \arrow[rr, hook] &  & {\bigoplus_{j=1}^{\nu}\mathrm{Sym}^{(d_j)}(\tau_{k,n}^*)} \arrow[dd]                   \\
                                                               &  &                                                                                          \\
                                                               &  & {\Qp\G_{k,n}} \arrow[uu, "\sigma_{f_1}\oplus \cdots \oplus \sigma_{f_\nu}"', bend right]
\end{tikzcd}\ee
The rank of this vector bundle is $\sum_{j=1}^{\nu}\binom{k+d_j}{d_j}$ (the left hand side of \eqref{eq:codim}). The zero locus of the section $\sigma_{f_1}\oplus \cdots \oplus \sigma_{f_\nu}$ consists of the set of $k$--flats which are contained in $Z(f_1, \ldots, f_\nu)$. In particular we see that, if \eqref{eq:codim} is verified, for the generic choice of $f_1, \ldots, f_\nu$, the corresponding section vanishes at finitely many points (i.e. the zero locus of the section is zero--dimensional, in the language of \cite{debarremanivel}).
\end{remark}
 
 We now move to the probabilistic framework. The first step is to endow $\Zp[x_0, \ldots, x_n]_{(d)}$ with a probability distribution. To this end, we endow $\Qp$ with the Haar measure $\lambda$, normalized such that $\lambda(\Zp)=1$. In this way the $p$--adic integers become a probability space, and we call the corresponding distribution the \emph{uniform} distribution. This is a special case of what is called a \emph{Gaussian} distribution on $p$--adic fields, see \cite{EMMT}. Mimicking \eqref{eq:kostlan}, we also turn the space of polynomials with coefficients in $\Zp$ into a probability space.

\begin{definition}We define a probability distribution on $\Zp[x_0, \ldots, x_n]_{(d)}$ by
\be\label{eq:random} f(x)=\sum_{|\alpha|=d}\xi_{\alpha}\cdot x_0^{\alpha_0}\cdots x_n^{\alpha_n},\ee
where now $\{\xi_\alpha\}_{|\alpha|=d}$ is a family of i.i.d. uniform elements in $\Zp$. We will call this distribution the \emph{uniform} distribution.
\end{definition}
The uniform distribution on the space of polynomials is invariant under the action of $\mathrm{GL}_{n+1}(\Zp)$ by change of variables (Proposition \ref{propo:invariant}) and it has therefore a clear geometric meaning. Notice that proper algebraic sets in $\Zp[x_0, \ldots, x_n]_{(d)}$ have probability zero: for instance, with probability one the zero set of a random polynomial is  smooth.
The natural map $f\mapsto \sigma_f$ induces a probability distribution on the space of sections of the bundle \eqref{eq:bundles}. The zeroes  of $\sigma_{f_1}\oplus \cdots \oplus \sigma_{f_\nu}$ are nondegenerate with probability one.

Our first theorem computes the expectation (below denoted by ``$\mathbb{E}$'') of the number of zeroes of the random section $\sigma_{f_1}\oplus\cdots \oplus \sigma_{f_\nu}$, under the assumption \eqref{eq:codim}, i.e. the expectation of the number of $k$-flats on a random complete intersection $Z(f_1, \ldots, f_\nu)\subset \QP^n.$

\begin{theorem}\label{thm:complete}
Let $f_1, \ldots, f_\nu$ be independent random polynomials of degrees $d_1, \ldots, d_\nu$ sampled from the uniform distribution \eqref{eq:random}. Then
\be \lim_{p\to \infty}\mathbb{E}\#\{\textrm{$k$-flats on $Z(f_1, \ldots, f_\nu)\subset \QP^n$}\}=1.\ee
\end{theorem}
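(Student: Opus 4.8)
The plan is to compute the expectation by a $p$-adic integral-geometry (Kac–Rice type) argument, then analyze the leading-order behavior as $p\to\infty$. First I would pass to the bundle picture of \eqref{eq:bundles}: the $k$-flats on $Z(f_1,\dots,f_\nu)$ are exactly the zeros of the random section $\sigma=\sigma_{f_1}\oplus\cdots\oplus\sigma_{f_\nu}$, and under \eqref{eq:codim} these are nondegenerate with probability one. By $\mathrm{GL}_{n+1}(\Zp)$-invariance of the uniform distribution (Proposition \ref{propo:invariant}) the induced distribution on sections is invariant under the transitive action on $\Qp\G_{k,n}$, so the expected number of zeros equals $\mathrm{vol}(\Qp\G_{k,n})$ times a local density at a single fixed flat $\ell_0$, e.g. $\ell_0=P(\Qp^{k+1}\times 0)$. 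The local density is the probability-weighted count of solutions near $\ell_0$, which in the $p$-adic setting collapses (there is no Jacobian-determinant weight as over $\R$, since $|\det|$-factors are handled by the measure) to an honest $p$-adic volume/counting computation: one reduces everything modulo $p$ and counts solutions over the residue field $\mathbb{F}_p$ — this is where Hensel's lemma and the nondegeneracy enter to guarantee that mod-$p$ solutions lift uniquely.

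Concretely, I would split the defining data into (i) the restriction $\sigma(\ell_0)=f_j|_{L_0}$, whose vanishing is a linear condition on the "diagonal" coefficients of the $f_j$, contributing a factor $\prod_j p^{-\binom{k+d_j}{d_j}}$ to the probability, and (ii) the transversality of $\sigma$ at $\ell_0$, i.e. the derivative $d\sigma(\ell_0)\colon T_{\ell_0}\Qp\G_{k,n}\to E_{\ell_0}$ being an isomorphism, which by the codimension condition \eqref{eq:codim} is a square matrix whose entries are (affine-linear in) the remaining random coefficients. The count of $k$-flats is then a sum over residue-field points of an indicator that this matrix is invertible mod $p$, and by Hensel's lemma each such point contributes exactly one genuine $\Qp$-flat. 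Assembling: $\mathbb{E}\#\{k\text{-flats}\}=\mathrm{vol}(\Qp\G_{k,n})\cdot p^{\,(k+1)(n-k)}\cdot \prod_j p^{-\binom{k+d_j}{d_j}}\cdot \mathbb{P}\{d\sigma \text{ invertible mod }p\}$, and the first three factors cancel exactly by \eqref{eq:codim} (since $\mathrm{vol}(\Qp\G_{k,n})$ over $\Zp$ with the natural measure is $1$ up to the $p$-power bookkeeping), leaving $\mathbb{E}\#\{k\text{-flats}\}=\mathbb{P}_{\mathbb{F}_p}\{M \text{ invertible}\}$ for an explicit random square matrix $M$ over $\mathbb{F}_p$ whose entries are (images of) independent uniform coordinates.

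The final step is the limit. The probability that a uniformly random $m\times m$ matrix over $\mathbb{F}_p$ is invertible is $\prod_{i=1}^{m}(1-p^{-i})$, which tends to $1$ as $p\to\infty$; more care is needed because the entries of $M$ are not quite i.i.d. uniform (they are specific affine-linear combinations of the Fourier/coefficient data of the $f_j$, and there may be shared coefficients across the matrix or fixed nonrandom entries coming from the chart geometry). The clean way around this is to observe that, whatever the precise distribution, the failure event "$M$ not invertible mod $p$" is contained in a proper subvariety of the affine space of coefficient-residues (nonempty complement since over $\overline{\mathbb{F}_p}$, indeed over $\C$, the generic section is transverse by \cite{debarremanivel}), and by Lang–Weil / elementary point-counting on hypersurfaces its density is $O(p^{-1})$. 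Hence $\mathbb{E}\#\{k\text{-flats}\}=1-O(p^{-1})\to 1$.

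\textbf{Main obstacle.} The delicate point is the exact bookkeeping in the middle step: verifying that the volume of $\Qp\G_{k,n}$, the $p$-power normalizations of the Haar measure on the coefficient space, and the codimension identity \eqref{eq:codim} combine to produce an \emph{exact} cancellation (no stray powers of $p$), so that the answer is literally $\mathbb{P}_{\mathbb{F}_p}\{M\text{ invertible}\}$ rather than that quantity times some $p^c$. This requires a careful choice of affine charts on the Grassmannian and tracking how the change-of-variables $f\mapsto f|_L$ distributes the uniform measure; the $p$-adic analogue of the integral-geometry formula (coarea/Kac–Rice over $\Qp$, cf.\ \cite{EMMT}) must be set up so that all Jacobians have absolute value $1$ generically. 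Once that is pinned down, the $p\to\infty$ asymptotics are comparatively soft.
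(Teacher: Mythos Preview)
Your overall architecture matches the paper's: reduce by $\mathrm{GL}_{n+1}(\Zp)$-invariance to a local computation at a single flat $\ell_0$, then show the resulting quantity is $1+O(1/p)$. But the middle step has two genuine errors. First, the $p$-adic Kac--Rice formula \emph{does} carry a Jacobian weight; the correct identity (Theorem~\ref{thm:reduction}) is
\[
\mathbb{E}\#\{\sigma_f=0\}=\mu(\Qp\G_{k,n})\cdot\mathbb{E}\bigl\{|\det(J\psi_f(0))|_p\bigr\},
\]
so your claim that ``there is no Jacobian-determinant weight as over $\R$'' is false, and $\mathbb{E}|\det J|_p$ is not the same as $\mathbb{P}\{J\text{ invertible mod }p\}$---it collects contributions from every valuation. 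Second, there is no \emph{exact} cancellation: the Grassmannian volume is
\[
\mu(\Qp\G_{k,n})=\frac{\lambda(\mathrm{GL}_{n+1}(\Zp))}{\lambda(\mathrm{GL}_{k+1}(\Zp))\,\lambda(\mathrm{GL}_{n-k}(\Zp))},
\]
which is never $1$ for finite $p$ (e.g.\ it equals $\tfrac{(p^3-1)(p^2+1)}{p^4(p-1)}$ for lines in $\QP^3$), so the identity $\mathbb{E}\#\{k\text{-flats}\}=\mathbb{P}_{\mathbb{F}_p}\{M\text{ invertible}\}$ cannot hold. You correctly flagged this bookkeeping as the delicate point, but the resolution is not an exact cancellation---it is that the two factors tend to $1$ \emph{separately}: $\mu(\Qp\G_{k,n})\to1$ because each $\lambda(\mathrm{GL}_m(\Zp))=\prod_{i=1}^m(1-p^{-i})\to1$, and $\mathbb{E}|\det J|_p\to1$ via the sandwich $\mathbb{P}(|\det J|_p=1)\le\mathbb{E}|\det J|_p\le1$.

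For the lower bound $\mathbb{P}(|\det J|_p=1)=1-O(1/p)$ your Lang--Weil/proper-subvariety argument is correct and would finish the proof once the right formula is in place. The paper does this more elementarily by a Schwartz--Zippel type iteration: each nonzero random entry of $J$ is repeated exactly $k+1$ times in distinct rows, so $\det J$ has degree $\le k+1$ in each variable; conditioning successively on the leading coefficients being units gives $\mathbb{P}(|\det J|_p=1)\ge(1-(k+1)/p)^{N}$ for a fixed $N=N(n,k)$. Either argument closes the proof once the correct Kac--Rice identity replaces your ``exact cancellation'' step.
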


\begin{remark}Theorem \ref{thm:complete} is in sharp contrast with its real analogue, at least for the case of hypersurfaces. Denoting by $C_n$ the number of lines on a generic hypersurface of degree $2n-3$ in $\CP^n$ and by $E_n$ the expectation of the number of real lines on a random, Kostlan distributed real hypersurface of degree $2n-3$ in $\RP^n$, \cite[Theorem 12]{BLLP} states that:
\be \lim_{n\to \infty}\frac{\log E_n}{\log C_n}=\frac{1}{2}.\ee
Since $C_n$ grows super-exponentially in $n$ (in fact $\log C_n=2n\log n+O(n)$), on hypersurfaces of degree $2n-3$ in large dimensional projective spaces we expect to see many real lines -- there is in fact a deterministic lower bound for this number, see Section \ref{sec:signed} below. Here, at least for large $p$, as a consequence of Theorem \ref{thm:complete}, having many $p$--adic lines is an extremely rare event. In the $p$--adic case, we will also prove an asymptotic upper bound on our expectation as $n\to \infty$, see Theorem \ref{thm:n} below.
\end{remark}

\begin{remark}The probabilistic approach in the $p$--adic case has been introduced by \cite{evans}, who first computed the expectation of the number of zeroes of a system of random equations (with respect to a probability distribution which is different from \eqref{eq:random}). In the recent paper \cite{pIGF}, the second named author of the current paper, together with Kulkarni, proved a generalization of the integral geometry formula to the $p$--adic setting, allowing to deduce some results on random systems of equations distributed as in \eqref{eq:random}. Independently, the distribution of zeroes of a random uniform univariate polynomial was also recently studied by Caruso in \cite{Caruso}. We notice that \cite[Corollary 41]{pIGF} corresponds to the case $k=0$, $\nu=n$ of Theorem \ref{thm:complete}, in which case we can actually prove the following result.
\end{remark}
\begin{theorem}\label{thm:complete0}
Let $f_1, \ldots, f_n$ be independent random polynomials of degrees $d_1, \ldots, d_n$ sampled from the uniform distribution \eqref{eq:random}. Then
\be \mathbb{E}\# Z(f_1, \ldots, f_n)=1.
\ee
\end{theorem}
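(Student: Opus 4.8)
The plan is to apply the $p$--adic integral geometry (Kac--Rice) formula of \cite{pIGF} (see also \cite{evans}) to the random section $\sigma_f:=\sigma_{f_1}\oplus\cdots\oplus\sigma_{f_n}$ of the rank--$n$ bundle $E:=\bigoplus_{j=1}^{n}\mathrm{Sym}^{(d_j)}(\tau_{0,n}^*)$ over $\Qp\G_{0,n}=\QP^n$, and then to use $\mathrm{GL}_{n+1}(\Zp)$--invariance to reduce the resulting integral to a computation at a single point. Fix the $\mathrm{GL}_{n+1}(\Zp)$--invariant measure $\mu$ on $\QP^n$ that restricts to normalized Haar measure on each standard chart $\Zp^n\subset U_i$, together with a $\mathrm{GL}_{n+1}(\Zp)$--equivariant family of volumes on the fibers $E_x$; such a family exists because the stabilizer of a point acts on the corresponding line $L\subset\Qp^{n+1}$ by units, hence on each $\mathrm{Sym}^{(d_j)}(L^*)$ by units, so the pullback of Haar measure under evaluation at a chosen generator of $L$ is invariant. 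With these normalizations the Kac--Rice formula reads
\[
\mathbb{E}\,\#Z(f_1,\dots,f_n)=\int_{\QP^n}\rho_x(0)\cdot\mathbb{E}\Big[\,\big|\det\nabla\sigma_f(x)\big|_p\ \Big|\ \sigma_f(x)=0\,\Big]\,d\mu(x),
\]
where $\rho_x$ is the density of the $E_x$--valued random vector $\sigma_f(x)$ and $\nabla\sigma_f(x)\colon T_x\QP^n\to E_x$ is its derivative at the zero, the scalar $|\det\nabla\sigma_f(x)|_p$ being the $p$--adic volume distortion between the chosen volumes on $T_x\QP^n$ and on $E_x$. The hypotheses needed to run this formula --- that $Z(f_1,\dots,f_n)$ is almost surely finite with nondegenerate zeros --- are precisely those recorded before the statement.

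Because the distribution of $f$ is $\mathrm{GL}_{n+1}(\Zp)$--invariant (Proposition \ref{propo:invariant}), $\mu$ is invariant, and $\mathrm{GL}_{n+1}(\Zp)$ acts transitively on $\QP^n$, the integrand does not depend on $x$, so it is enough to evaluate it at $x_0=[1:0:\cdots:0]$. In the affine chart $[1:y_1:\cdots:y_n]$, with the trivialization of $E$ identifying $\sigma_{f_j}$ with $g_j(y):=f_j(1,y_1,\dots,y_n)$, the value $\sigma_{f_j}(x_0)=g_j(0)$ is the coefficient of $x_0^{d_j}$ in $f_j$, while the entries of the derivative matrix $A:=\big(\partial_{y_i}g_j(0)\big)_{i,j=1}^{n}$ are the coefficients of the monomials $x_0^{d_j-1}x_i$ in $f_j$. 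These $n+n^2$ coefficients are i.i.d.\ uniform on $\Zp$, so $\sigma_f(x_0)$ is uniform on $\Zp^n$ --- whence $\rho_{x_0}(0)=1$ --- and $A$ is a uniform random element of $M_n(\Zp)$, independent of the event $\{\sigma_f(x_0)=0\}$. Hence the integrand at $x_0$ equals $\mathbb{E}\,|\det A|_p$, which by the standard recursion (use a matrix in $\mathrm{GL}_n(\Zp)$ to clear the first column, then reduce to the $(n-1)\times(n-1)$ case) equals $\prod_{k=1}^{n}\frac{1-p^{-k}}{1-p^{-k-1}}=\frac{1-p^{-1}}{1-p^{-n-1}}$. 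On the other hand, stratifying $\QP^n$ by the position of the first unit coordinate of a primitive representative exhibits it as a disjoint union of pieces of $\mu$--measure $1,p^{-1},\dots,p^{-n}$, so $\mu(\QP^n)=\sum_{i=0}^{n}p^{-i}=\frac{1-p^{-n-1}}{1-p^{-1}}$. Multiplying the two factors gives $\mathbb{E}\,\#Z(f_1,\dots,f_n)=1$.

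The step that will need the most care is the normalization bookkeeping in the Kac--Rice formula: one must check that the $\mathrm{GL}_{n+1}(\Zp)$--equivariant fiber volume chosen above agrees, at $x_0$, with the Haar measure coming from the chart trivialization used in the local computation --- both amount to evaluating sections at the representative $(1,0,\dots,0)$ --- so that the quantities $\mathbb{E}\,|\det A|_p$ and $\mu(\QP^n)$ delivered by the two ends of the argument are genuinely reciprocal. The remaining ingredients --- the invariance reduction, the identification of the relevant coefficients at $x_0$, and the two elementary evaluations of $\mathbb{E}\,|\det A|_p$ and of $\mu(\QP^n)$ --- are routine.
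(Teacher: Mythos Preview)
Your proof is correct and shares its backbone with the paper's argument: both reduce, via a $p$--adic Kac--Rice formula (the paper's Theorem~\ref{thm:reduction}), to the identity
\[
\mathbb{E}\,\#Z(f_1,\dots,f_n)=\mu(\QP^n)\cdot\mathbb{E}\,|\det M_n|_p,
\]
where $M_n\in\Zp^{n\times n}$ has i.i.d.\ uniform entries. The difference lies in how this product is evaluated. You compute the two factors separately --- $\mathbb{E}\,|\det M_n|_p=\tfrac{1-p^{-1}}{1-p^{-n-1}}$ via a column--clearing recursion, and $\mu(\QP^n)=\tfrac{1-p^{-n-1}}{1-p^{-1}}$ via the Schubert--cell stratification --- and observe they are reciprocals. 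The paper instead notes that the distribution of $M_n$ does not depend on the degrees $d_j$, so the expectation is the same as for $d_1=\cdots=d_n=1$; but $n$ generic hyperplanes in $\QP^n$ meet in exactly one point, so the product equals $1$ without computing either factor. The paper then runs your computation in reverse, extracting $\mathbb{E}\,|\det M_n|_p$ as a corollary (Corollary~\ref{coro:matrix}). Your route is self--contained and makes the normalization bookkeeping explicit; the paper's is shorter and sidesteps that bookkeeping entirely by anchoring to a case where the geometric answer is known a priori.
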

As a corollary of this result we compute the expectation of the absolute value of the determinant of a matrix $M_n\in \Zp^{n \times n}$ filled with i.i.d. uniform elements in $\Zp$ (Corollary \ref{coro:matrix}):
\be\label{eq:M} \mathbb{E}\{|\det M_n|_p\}=\frac{(p-1)p^n}{p^{n+1}-1}.\ee

Another case of special interest is the count of the number of lines on a cubic surface in $\QP^3,$ for which we can prove the following sharper version of Theorem \ref{thm:complete} (we also prove a similar result for the intersection of two random quadrics in $\QP^4$, see Theorem \ref{thm:quadrics} below).
\begin{theorem}\label{thm:cubic}The expected number of $p$--adic lines on a random uniform $p$--adic cubic surface in $\QP^3$ is $\frac{(p^3-1)(p^2+1)}{p^5-1}.$
\end{theorem}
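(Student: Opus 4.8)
The plan is to realise the scheme of lines on $Z(f)$ as the zero locus of the random section $\sigma_f$ of the rank--$4$ bundle $\mathrm{Sym}^{(3)}(\tau_{1,3}^*)$ over the $4$--dimensional Grassmannian $\Qp\G_{1,3}$, and to compute $\mathbb{E}\#\{\textrm{lines}\}$ by the $p$--adic integral geometry (Kac--Rice) formula of \cite{pIGF}: in an affine chart this reads
\be
\mathbb{E}\#\{x:\ F(x)=0\}=\int \mathbb{E}\big[\,|\det DF(x)|_p\ \big|\ F(x)=0\,\big]\,\rho_{F(x)}(0)\,dx,
\ee
where $\rho_{F(x)}(0)$ is the density at the origin of the random point $F(x)$. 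Since the uniform distribution is $\mathrm{GL}_4(\Zp)$--invariant (Proposition \ref{propo:invariant}) and $\mathrm{GL}_4(\Zp)$ acts transitively on $\Qp\G_{1,3}$, this integrand is constant on $\Qp\G_{1,3}$, so the whole computation reduces to one flat $\ell_0=P(L_0)$ with $L_0=\mathrm{span}(e_0,e_1)$, multiplied by a volume factor. Exactly as in the proof of Theorem \ref{thm:complete0} this factor equals the number of $\mathbb{F}_p$--points of the Grassmannian of lines in $\mathbb{P}^3$ divided by $p^{4}$, namely $(p^2+1)(p^2+p+1)/p^{4}$.

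The local picture at $\ell_0$ is explicit. The value $\sigma_f(\ell_0)=f|_{L_0}$ is the binary cubic $\xi_{3000}x_0^3+\xi_{2100}x_0^2x_1+\xi_{1200}x_0x_1^2+\xi_{0300}x_1^3$, whose four coefficients are i.i.d. uniform in $\Zp$, so $\rho_{\sigma_f(\ell_0)}(0)=1$. Parametrising nearby flats by $(a,b,c,d)\mapsto\mathrm{span}(e_0+ae_2+be_3,\ e_1+ce_2+de_3)$ and differentiating $f(s,t,as+ct,bs+dt)$ in $(a,b,c,d)$ at the origin, one checks that $D\sigma_f(\ell_0)$ is, up to a transposition of columns, the Sylvester matrix of the two binary quadratics $q_1=\partial_{x_2}f|_{x_2=x_3=0}$ and $q_2=\partial_{x_3}f|_{x_2=x_3=0}$, so $\det D\sigma_f(\ell_0)=\pm\,\mathrm{Res}(q_1,q_2)$; geometrically $\mathrm{Res}(q_1,q_2)=0$ expresses that $Z(f)$ is singular somewhere along $\ell_0$, i.e. that $\ell_0$ is a non--reduced point of the scheme of lines. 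The six coefficients of $q_1,q_2$ are again i.i.d. uniform in $\Zp$ and are disjoint from the four coefficients of $\sigma_f(\ell_0)$; hence $D\sigma_f(\ell_0)$ is independent of $\sigma_f(\ell_0)$ and the conditioning disappears. Collecting everything,
\be
\mathbb{E}\#\{\textrm{lines on }Z(f)\subset\QP^3\}=\frac{(p^2+1)(p^2+p+1)}{p^{4}}\cdot\mathbb{E}\,|\mathrm{Res}(q_1,q_2)|_p .
\ee
As a check, the same recipe applied to $k=0$, $\nu=n$ turns $D\sigma_f(\ell_0)$ into a uniform $n\times n$ matrix and recovers Theorem \ref{thm:complete0} through \eqref{eq:M}.

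It remains to prove the identity
\be
\mathbb{E}\,|\mathrm{Res}(q_1,q_2)|_p=\frac{p^{4}}{1+p+p^2+p^3+p^4}=\frac{p^4(p-1)}{p^5-1}
\ee
for two binary quadratics with i.i.d. uniform coefficients; this is the heart of the argument and the step I expect to be the main obstacle. The approach is to stratify over $q_1$: first pull out the content of $q_1$ (a geometric series, since $\mathrm{Res}$ is bihomogeneous of bidegree $(2,2)$), then condition on the reduction $\bar q_1$ modulo $p$, and use that for primitive $q_1$ one has $\mathrm{Res}(q_1,q_2)=(\textrm{unit})\cdot q_2(r_1)\,q_2(r_2)$, where $r_1,r_2\in\mathbb{P}^1(\overline{\Qp})$ are the roots of $q_1$ in primitive homogeneous coordinates. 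If $\bar q_1$ is separable and split, $q_2(r_1)$ and $q_2(r_2)$ are independent uniform in $\Zp$, contributing $\big(p/(p+1)\big)^2$; if $\bar q_1$ is irreducible, $r_1,r_2$ are conjugate over the unramified quadratic extension of $\Qp$, so $|\mathrm{Res}(q_1,q_2)|_p=|q_2(r_1)|_p^{2}$ with $q_2(r_1)$ uniform in its ring of integers, contributing $p^2/(p^2+1)$; if $\bar q_1$ has a double root one descends to a finer stratification according to $v_p(\mathrm{disc}(q_1))$ and the splitting field of $q_1$ (rational double root, two roots colliding modulo $p$, or roots in a ramified extension) and iterates. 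Assembling these contributions weighted by the measures of the strata yields the displayed closed form, and therefore
\be
\mathbb{E}\#\{\textrm{lines on }Z(f)\subset\QP^3\}=\frac{(p^2+1)(p^2+p+1)}{1+p+p^2+p^3+p^4}=\frac{(p^3-1)(p^2+1)}{p^{5}-1},
\ee
which tends to $1$ as $p\to\infty$, in agreement with Theorem \ref{thm:complete}. The intersection of two quadrics in $\QP^4$ (Theorem \ref{thm:quadrics}) should yield to the same scheme, with $\det D\sigma_f(\ell_0)$ replaced by the appropriate resultant--type invariant on $\Qp\G_{1,4}$.
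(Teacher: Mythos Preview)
Your reduction is the same as the paper's: Theorem~\ref{thm:reduction} gives
\(\mathbb{E}\#\{\sigma_f=0\}=\mu(\Qp\G_{1,3})\cdot\mathbb{E}|\det J\psi_f(0)|_p\),
with the same volume factor $(p^2+1)(p^2+p+1)/p^4$, and your observation that $\det J\psi_f(0)=\pm\mathrm{Res}(q_1,q_2)$ for the two binary quadratics $q_i=\partial_{x_{i+1}}f|_{L_0}$ is correct (and is not made explicit in the paper). Where you genuinely diverge is in evaluating $\mathbb{E}|\mathrm{Res}(q_1,q_2)|_p$. The paper does \emph{not} stratify by the reduction type of $q_1$; instead it passes to $\mathbb{Z}/p^n\mathbb{Z}$ via Lemma~\ref{lemma:integral}, organises the six variables as a $3\times 2$ matrix, and pushes forward by the minor map $h:\mathcal{M}\to\mathbb{P}^2(\mathbb{Z}/p^n\mathbb{Z})$, so that $\det J\psi_f(0)=k_2^2-k_1k_3$ becomes a function on $\mathbb{P}^2$. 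The fibre cardinalities of $h$ are computed from the Smith normal form (Lemma~\ref{lemma:card}), and the remaining sum over $\mathbb{P}^2(\mathbb{Z}/p^{n-m}\mathbb{Z})$ is handled by the explicit point count $A(k)$ of the conic $k_2^2=k_1k_3$ (Lemma~\ref{lemma:Ak}), with an error term controlled by Lemma~\ref{lemma:S}. This is more combinatorial but entirely uniform: no case distinction on splitting behaviour, no field extensions, and the same template immediately runs for the two quadrics in $\QP^4$ (Theorem~\ref{thm:quadrics}).

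Your route is more geometric and the split and inert strata are handled cleanly, but the proof as written has a real gap at the ``double root'' case. You say one ``descends to a finer stratification \dots\ and iterates'', yet this step is where all the difficulty concentrates: when $\bar q_1$ has a double root one must separate the sub-cases $v_p(\mathrm{disc}\,q_1)=1$ (roots in a ramified quadratic extension), $v_p(\mathrm{disc}\,q_1)\ge 2$ with roots $p$-adically close in $\Qp$, etc., and the recursion is not literally self-similar (after the substitution $x\mapsto px$ the pair $(q_1,q_2)$ does not return to the original uniform model). Concretely, your sketch must produce $D=\dfrac{p^2(p^2+p+1)}{(p+1)^2(p^2+1)}$ for the conditional expectation on the double-root stratum in order to recover $E_1=\dfrac{p^2}{p^2+p+1}$ and hence $E_0=\dfrac{p^4}{1+p+\cdots+p^4}$; until that computation is actually carried out, the argument is incomplete. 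If you can close this case (e.g.\ by a careful change of variables that reduces it to a mix of the split/inert contributions at the next level plus a genuine self-similar term), you will have an alternative, arguably more conceptual, proof; otherwise the paper's minor-map approach remains the only complete one.
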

The proof of Theorem \ref{thm:complete} is based on a $p$--adic version of the Kac--Rice formula for sections of vector bundles (Theorem \ref{thm:reduction}), which reduces the computation of the expectation of the number of zeroes of a random section to the expectation of the valuation of the determinant of a special random matrix $J$:
\be \label{eq:JJ}\mathbb{E}\#\{\textrm{$k$-flats on $Z(f_1, \ldots, f_\nu)\subset \QP^n$}\}=\mu(\Qp\G_{k,n})\cdot \mathbb{E}\left\{|\det(J)|_p\right\}.\ee
Here the matrix $J$ is a random square matrix with $(n-k)(k+1)$ columns and whose structure depends on $\nu, k$ and $d_1, \ldots, d_\nu$. For instance, when $k=0$ the matrix $J=M_n$ is the above matrix filled with i.i.d. uniform elements in $\Zp$ (which explains \eqref{eq:M}), but the general case is more complicated.  Theorem \ref{thm:complete} is based on the asymptotic analysis of \eqref{eq:JJ}; Theorem \ref{thm:cubic} and Theorem \ref{thm:quadrics} require instead a more delicate study, based on a counting argument in the reduction modulo $p^m$.
\subsection{Signed counts}\label{sec:signed}It is interesting to observe that over the Reals there is a deterministic lower bound on the number of real lines on a generic cubic surface. In fact, generalizing a construction of Segre \cite{Segre}, Okonek and Teleman \cite{OkTel} and Kharlamov and Finashin \cite{finkh} introduced a way to assign a sign to each line on the zero set of a hypersurface of degree $2n-3$ in $\RP^n$. The signed count of the number of real lines does not depend on the hypersurface and gives $(2n-3)!!$. In particular there are always $3$ lines on a smooth real cubic surface, and exactly $3$ if counted with signs. This enriched count has been extended to any field \cite{kass2017arithmetic}, and it gives an invariant with values in the Grothendick-Witt group of the field. In the $p$--adic case however this is not enough to guarantee a solution: there are $p$--adic cubic surfaces with no lines on them. 

In the real case the computation of the signed count can also be done in a probabilistic way, as showed in \cite{BLLP}. Let us explain this point in the simplest case of cubics. Using the Real version of the Kac--Rice formula, it can be proved that the expectation $E_3$ of the number of real lines on a random real cubic surface equals:
\be\label{eq:E3} E_3=\frac{1}{12\pi^2}\cdot \mu(\mathbb{R}\mathbb{G}(1,3))\cdot \mathbb{E}|\det J|\ee
where $\mu(\mathbb{R}\mathbb{G}(1,3))=2\pi^2$ is the volume of the real Grassmannian, and $J$ is the following matrix, filled with the random gaussian variables defined in \eqref{eq:gaussian}:
\be\label{eq:J} J= 
\begin{pmatrix}
\xi_{2010} &0 &\xi_{2001}  &0 \\ 
\xi_{1110} &\xi_{2010}  &\xi_{1101}  &\xi_{2001} \\ 
\xi_{0210} &\xi_{1110} &\xi_{0201}  &\xi_{1101} \\ 
0 &\xi_{0210}  &0  &\xi_{0201}
\end{pmatrix}.\ee
Note that this matrix is the Jacobian, in local coordinates, of the random section $\sigma_f$ at $\ell_0=P(L_0)$, where $L_0=\{x_2=x_3=0\}\subset \R^4.$ 
As we already mentioned, the Kac--Rice approach works also over the $p$--adics, and it is our starting point for the proofs of Theorem \ref{thm:complete} and Theorem \ref{thm:cubic}. For instance, in the case of cubics:
\be \mathbb{E}\# \{\textrm{$p$--adic lines on a random cubic surface}\}=\mu(\Qp\G(1,3))\cdot \mathbb{E}|\det J|_p\ee
where now $\mu(\Qp\G(1,3))$ denotes the volume of the $p$--adic Grassmannian (defined in Section \ref{sec:mms} below) and $J$ is the same matrix as in \eqref{eq:J}, this time filled with uniform variables in $\Zp.$ (The constant $\frac{1}{12\pi^2}$ appearing in \eqref{eq:E3} is the value at zero of the density of the coefficients of our random cubic polynomial; in the $p$--adic case this constant is $1$.)

The interesting point now is that in the real case the signed count can be obtained simply by computing the expectation of the quantity on the r.h.s. of \eqref{eq:E3}, but where we remove the modulus from the determinant of $J$ see \cite[Proposition 3]{BLLP}. (A similar statement holds true for higher dimensions/degrees). Mimicking the proof of \cite[Proposition 2]{BLLP} we can compute the expectation of the determinant of $J$ in the $p$--adic case. Computing this expectation amount now to compute the integral of a function with values in $\Qp$. We do this using the notion of \emph{Volkenborn integral}, which for a continuous function $f\, : \Zp^k \to \Qp$ is defined by the following limit (if it exists):
$$\int_{\Zp^k}f(x)\textrm{dx}=\lim_{n\rightarrow +\infty} \frac{1}{p^{kn}}\sum_{a_1=0}^{p^n-1}\cdots \sum_{a_k=0}^{p^n-1}f(a_1,\ldots,a_k).$$
In the case of cubics, this would give:
\begin{align*}
 \mathbb{E}\{\det(J)\}&=\mathbb{E}\bigg\{(\xi_{2010}\xi_{0201}-\xi_{0210}\xi_{2001})^2-(\xi_{2010}\xi_{1101}-\xi_{1110}\xi_{2001})(\xi_{1110}\xi_{0201}-\xi_{0210}\xi_{1101})\bigg\}\\
 &=\int_{\Zp^6}   (x_1x_6-x_3x_4)^2-(x_1x_5-x_2x_4)(x_2x_6-x_3x_5) \textrm{d$\xi_1\ldots$d$\xi_6$}\\
 &= \lim_{n\rightarrow +\infty}\frac{1}{p^{6n}}\sum_{0\le a_1,\ldots,a_6\le p^n-1}(a_1a_6-a_3a_4)^2-(a_1a_5-a_2a_4)(a_2a_6-a_3a_5)\\
 &=\lim_{n\rightarrow +\infty}\frac{1}{p^{6n}}\cdot\frac{p^{6n}(p^n-1)^2(5p^{2n}+p^{n}-4)}{36}\\
 &=-\frac{1}{9}\in \Qp.
\end{align*}
We do not have a clear interpretation of the meaning of this number, but we believe this can be the starting point for future investigations.

\subsection*{Acknowledgements}The authors wish to thank Yassine El Maazouz and Avinash Kulkarni for stimulating discussions and Bernd Sturmfels for his constant support. The first named author was supported by SISSA (Trieste).

\section{Preliminaries}
\subsection{$p$--adic spaces as metric measure spaces}\label{sec:mms}
Let $p$ be a prime number and denote by $|\cdot|_p$ the $p$--adic absolute value. We endow the space $\Qp^n$ with the norm $\|\cdot\|_p$ defined by:
\be\forall (a_1, \ldots, a_n)\in \Qp^n:\quad \|(a_1, \ldots, a_n)\|_p:=\sup_{i}|a_i|_p .\ee
In this way $\Qp^n$ becomes a metric space. We endow it also with the Haar measure $\la$ normalized over the unit ball $\Zp^n$. i.e. $\la(\Zp^n)=1$. In particular, for any ball $B(x, p^{-m})\subset \Qp^n$ we have:
\be \la(B(x, p^{-m}))=p^{-nm}.\ee
The measure $\lambda$ is invariant under the group $\mathrm{GL}_n(\Zp)$, i.e. for every measurable set $U\subseteq \Qp^n$ and every element $M\in \mathrm{GL}_n(\Zp)$ we have $\lambda(M(U))=\lambda(U).$ 

Once restricted to $\Zp^n$, the measure $\lambda$ becomes a probability measure. We call the corresponding probability distribution on $\Zp^n$ the \emph{uniform distribution}.  Using this terminology, we can restate the $\mathrm{GL}_n(\Zp)$--invariance of $\lambda$ as follows: let $\xi$ be a uniformly distributed random vector in $\Zp^n$; then for every $M\in \mathrm{GL}_n(\Zp)$ the vectors $\xi$ and $M\xi$ have the same distribution.

We view the set of $n\times n$ matrices as a subset of $\Qp^{n\times n}$, with the corresponding measure. Under this identification, the set $\mathrm{GL}_n(\Zp)$ is contained in $\Zp^{n\times n}$ and it consists of matrices which are invertible modulo $p$; its measure is given by \cite[Theorem 4.1]{EVANS200289}:
\be\label{eq:volGL}\la(\textrm{GL}_n(\Zp))=\left(1-\tfrac{1}{p}\right)\cdots\left(1-\tfrac{1}{p^n}\right).\ee
More generally, again by \cite{EVANS200289}, we have:
 \be\label{eq:12}\la(\{ |\det(M)|_p=p^{m}\})=\Pi_n p^{-m}\frac{\Pi_{n+m-1}}{\Pi_m \cdot \Pi_{n-1}} \ee
 where $\Pi_k=\left(1-\tfrac{1}{p}\right)\cdots\left(1-\tfrac{1}{p^k}\right)$.
 
 We endow the Grassmannian $\Qp\G_{k,n}$ with the normalized\footnote{In this way the measure of the Grassmannian equals $\mu(\Qp\G_{k,n})=\frac{\lambda(\mathrm{GL}_{n+1}(\Zp))}{\lambda(\mathrm{GL}_{n-k}(\Zp))\lambda(\mathrm{GL}_{k+1}(\Zp))}$, reflecting the Real case.} pushforward measure 
 \be \mu:=\frac{1}{\lambda(\mathrm{GL}_{n-k}(\Zp))\lambda(\mathrm{GL}_{k+1}(\Zp))}q_*\la,\ee
 where the map $q: \mathrm{GL}_{n+1}(\mathbb{Z}_p) \to  \Qp\G_{k,n}$ is given by
\be q:M \mapsto \mathrm{im}\left(M\begin{pmatrix}
    \mathbf{1}_{k+1}\\
    0
    \end{pmatrix}\right).
\ee
We refer the reader to \cite{pIGF} for more details on the properties of $\Qp^n$ and related spaces as metric measure spaces. On these spaces continuous functions can be integrated and a useful proposition for the sequel is the following change of variable formula from \cite{evans}.
 
 \begin{proposition}\label{Kac}
  Let $X$ be an open subset of $\Qp^m$ and $f\, :X\to \Qp^n$ a continuously differentiable map. For all measurable subset $Y$ of $\Qp^n$ we have: 
  $$\int_Y \#\{f=y\}\textrm{dy}=\int_{f^{-1}(Y)}|\det(Jf(x))|_p\textrm{dx}.$$
\end{proposition}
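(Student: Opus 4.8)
The plan is to prove this along the classical route to an area / change-of-variables formula, adapted to the ultrametric setting; note that the Jacobian determinant forces $m=n$, which I take for granted. Write $C=\{x\in X:\det(Jf(x))=0\}$ for the critical locus. I would organise the argument around three steps: (i) a local change of variables near regular points, (ii) a $p$--adic Sard lemma saying $f(C)$ is $\lambda$--null, and (iii) a gluing step exploiting that every open subset of $\Qp^n$ is a countable disjoint union of balls.

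For (i), fix $x_0\notin C$. Since $f$ is continuously differentiable and $Jf(x_0)\in\mathrm{GL}_n(\Qp)$, the $p$--adic inverse function theorem yields a ball $B\ni x_0$ on which $f$ is a homeomorphism onto a ball $f(B)$ and $|\det(Jf(\cdot))|_p$ is constant $=|\det(Jf(x_0))|_p$. For $B$ small the nonlinear remainder of $f$ at $x_0$ is ultrametrically dominated by the linear part $Jf(x_0)$, so $f|_B$ distorts $p$--adic distances — and therefore Haar measure — by exactly the same factor as the linear map $Jf(x_0)$; hence $\lambda(f(A))=\int_A|\det(Jf(x))|_p\,dx$ for every measurable $A\subseteq B$. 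Since this persists on sub-balls, such ``good balls'' are hereditary under refinement.

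For (ii), on a compact $K\subset X$ and for any $\eta>0$, each point of $C\cap K$ lies in a small ball $B$ on which $|\det(Jf)|_p$ is tiny and $f$ is close to affine; a direct estimate — the image of a ball under an affine map with small-determinant linear part has small relative measure, and in the ultrametric world the nonlinear remainder does not enlarge this image — gives $\lambda(f(B))\le\eta\,\lambda(B)$. Covering $C\cap K$ by finitely many such balls, made pairwise disjoint via the nested/disjoint dichotomy of $p$--adic balls, yields $\lambda(f(C\cap K))\le\eta\,\lambda(K)$, and letting $\eta\to0$ and exhausting $X$ by such compacts gives $\lambda(f(C))=0$. In particular, for $\lambda$--almost every $y$ the fibre $f^{-1}(y)$ is disjoint from $C$, and since $f$ is locally injective off $C$ these fibres are at most countable.

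For (iii), write $X\setminus C$ as a countable disjoint union of balls $B_i$ of the type produced in step (i). Then $\#\{x\in X\setminus C:f(x)=y\}=\sum_i\mathbbm{1}_{f(B_i)}(y)$ for all $y$, so, dropping the null set $f(C)$ by step (ii) and using that each $f|_{B_i}$ is a homeomorphism onto $f(B_i)$,
\[
\int_Y\#\{f=y\}\,dy=\int_{Y\setminus f(C)}\sum_i\mathbbm{1}_{f(B_i)}(y)\,dy=\sum_i\lambda\big(f(B_i)\cap Y\big)=\sum_i\int_{B_i\cap f^{-1}(Y)}|\det(Jf(x))|_p\,dx,
\]
the last equality by step (i). Summing over $i$ and noting that $C$ contributes nothing on the right, since $|\det(Jf)|_p=0$ there, gives $\int_{f^{-1}(Y)}|\det(Jf(x))|_p\,dx$; all sum–integral interchanges are justified by monotone convergence, the integrands being nonnegative. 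The main obstacle is step (i): proving cleanly that a $p$--adic $C^1$ map with invertible Jacobian acts locally as a pure measure dilation by $|\det(Jf)|_p$, with no residual distortion — a phenomenon sharper than its real analogue and genuinely powered by the ultrametric inequality — together with the accompanying Sard-type estimate of step (ii). Both are established in \cite{evans}.
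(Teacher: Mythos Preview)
The paper does not actually prove this proposition: it is simply quoted as ``the following change of variable formula from \cite{evans}'' and used as a black box. Your proposal therefore goes well beyond what the paper does, supplying a genuine proof outline (local bijectivity and measure dilation at regular points, a $p$--adic Sard lemma, and a countable disjoint-ball decomposition to glue) while, like the paper, ultimately pointing to \cite{evans} for the two technical ingredients. The sketch is sound and is essentially the standard route; there is nothing to compare against in the paper itself.
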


Next lemma, which is going to  be used in the proofs of Theorem \ref{thm:cubic} and Theorem \ref{thm:quadrics}, gives the analogue of Riemann sums for performing integrals of continuous functions on $\Zp^n.$
\begin{lemma}\label{lemma:integral}
Let $f: \Zp^n \to \R$ be a continuous function. Then $f$ is integrable and its integral is given by:
\be 
\int_{\Zp^n} f(x)\textrm{dx}=\lim_{m\rightarrow +\infty} \frac{1}{p^{mn}}\sum_{0\le a_1,\ldots,a_n\le p^m-1} f(a_1,\ldots, a_n)
\ee
\end{lemma}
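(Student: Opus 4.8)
The plan is to show that the Riemann-type sums on the right-hand side converge and that their limit agrees with the Haar integral of $f$, exploiting the compactness of $\Zp^n$ together with the fact that balls of radius $p^{-m}$ partition $\Zp^n$ into exactly $p^{mn}$ pieces of equal measure $p^{-mn}$. First I would fix notation: for each $m$, write $\Zp^n=\bigsqcup_{a} B(a,p^{-m})$ where $a$ ranges over the $p^{mn}$ representatives $(a_1,\dots,a_n)$ with $0\le a_i\le p^m-1$, and note $\la(B(a,p^{-m}))=p^{-mn}$. The quantity $\frac{1}{p^{mn}}\sum_a f(a)$ is then exactly the integral over $\Zp^n$ of the step function $f_m$ that takes the constant value $f(a)$ on $B(a,p^{-m})$.

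The key step is to invoke uniform continuity: since $\Zp^n$ is compact and $f$ is continuous, $f$ is uniformly continuous, so given $\epsilon>0$ there is $M$ such that for $m\ge M$ and any $x,y$ in a common ball $B(a,p^{-m})$ one has $|f(x)-f(y)|<\epsilon$; in particular $\norm{f_m-f}_\infty<\epsilon$ on $\Zp^n$ for all $m\ge M$. Hence $f_m\to f$ uniformly, and since $\la(\Zp^n)=1$,
\[
\left|\int_{\Zp^n} f\,\textrm{dx}-\frac{1}{p^{mn}}\sum_{0\le a_1,\dots,a_n\le p^m-1} f(a)\right|
=\left|\int_{\Zp^n}(f-f_m)\,\textrm{dx}\right|\le \norm{f-f_m}_\infty<\epsilon
\]
for every $m\ge M$. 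This proves both that $f$ is integrable (it is the uniform limit of integrable step functions, and each $f_m$ is $\la$-measurable and bounded) and that the stated limit formula holds. Integrability of a continuous function on the compact space $\Zp^n$ can alternatively just be quoted, but the argument above gives it for free.

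I do not expect any genuine obstacle here; the only point requiring a little care is that the step functions $f_m$ are taken with respect to a consistent choice of representatives, so that the partition at level $m+1$ refines the partition at level $m$ — but since any choice of representatives with $0\le a_i\le p^m-1$ yields the same value $\int_{\Zp^n} f_m\,\textrm{dx}=p^{-mn}\sum_a f(a)$ (the ball is the same regardless of which of its points names it), even this is not strictly needed: the uniform estimate $\norm{f-f_m}_\infty\to 0$ is all that is used, and it holds for any such choice. One should also remark that, although the lemma is stated for $\R$-valued $f$, the same proof works verbatim for functions valued in any Banach space, which is the form in which the $\Qp$-valued Volkenborn-type sums in Section \ref{sec:signed} are implicitly used.
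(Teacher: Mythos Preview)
Your proof is correct and follows essentially the same route as the paper: both define the step functions $f_m=\sum_a f(a)\,\chi_{B(a,p^{-m})}$ and identify $\int f_m$ with the Riemann sum, then pass to the limit. The only difference is that you justify the interchange via uniform convergence (using uniform continuity on the compact $\Zp^n$), whereas the paper uses pointwise convergence together with the dominated convergence theorem; your version is marginally more elementary. One small caveat: your closing remark about the $\Qp$-valued case is not quite right, since the Volkenborn integral in Section~\ref{sec:signed} is a genuinely different object (a $\Qp$-valued limit, not a Haar integral) and need not exist for arbitrary continuous $\Qp$-valued functions, so the argument does not carry over verbatim there.
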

\begin{proof}The proof is elementary.
Since $f$ is continuous, it is bounded and therefore integrable. For every $m\in \mathbb{N}$, consider the function:
\be f_m(x):=\sum_{0\le a_1,\ldots,a_n\le p^m-1} f(a_1,\ldots, a_n)\cdot \chi_{B(a;p^{-m})}(x),\ee
where $a=(a_1, \ldots, a_n)$ and $\chi_{B(a;p^{-m})}$ is the indicator function of $B(a;p^{-m})$. The sequence of functions $\{f_m\}_{m\in \mathbb{N}}$ converge pointwise to $f$ and is dominated by the constant $\max_{x\in \Zp^n}|f(x)|$.  Therefore:
\begin{align} \int_{\Zp^n}f(x)dx&=\int_{\Zp^n}\lim_{m\to \infty}f_m(x)dx\\
&=\lim_{m\to \infty}\int_{\Zp^n}f_m(x)dx\\
&=\lim_{m\rightarrow +\infty} \lambda(B(a, p^{-m}))\sum_{0\le a_1,\ldots,a_n\le p^m-1} f(a_1,\ldots, a_n)\\
&=\lim_{m\rightarrow +\infty} \frac{1}{p^{mn}}\sum_{0\le a_1,\ldots,a_n\le p^m-1} f(a_1,\ldots, a_n).
\end{align}
\end{proof}
Let us also recall the following proposition, which is going to be used in the proofs of Theorem \ref{thm:cubic} and Theorem \ref{thm:complete}; it is a consequence of the $p$--adic singular value decomposition, see \cite[Theorem 3.1]{EVANS200289}.
\begin{proposition}\label{SVD}
  Let $M\in (\Zn)^{k\times k}$ with $\det(M) \ne 0$, then there exist unique integers $0\le u_1\le \cdots \le u_k\le n-1$ and $U,V \in GL_k(\Zn)$ such that:
  $$M=U\begin{pmatrix}
  p^{u_1} &  &  \\
  & \ddots & \\
  & & p^{u_k}
  \end{pmatrix}V.$$
\end{proposition}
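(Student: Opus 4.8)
The plan is to prove Proposition~\ref{SVD} as a $p$--adic Smith normal form statement over the ring $\Zn = \Zp/p^n\Zp$, carrying out an elimination argument by hand and then addressing uniqueness separately. Concretely, I would first lift the problem (if convenient) or work directly over $\Zn$: given $M \in (\Zn)^{k\times k}$ with $\det M \neq 0$ in $\Zn$, I want to reduce $M$ to a diagonal matrix $\mathrm{diag}(p^{u_1},\dots,p^{u_k})$ by multiplying on the left and right by elements of $GL_k(\Zn)$. The key observation is that $GL_k(\Zn)$ contains all elementary row and column operations (adding a multiple of one row/column to another, and permutations), so the question is whether Gaussian elimination can be pushed through over this non-field coefficient ring.

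The main steps in order: (1) Among all entries $M_{ij}$, pick one of minimal $p$--valuation, say valuation $u_1$; note $u_1 < n$ because $\det M \neq 0$ in $\Zn$ would otherwise force a contradiction (if every entry is divisible by $p$, then $\det M$ is divisible by $p^k$; one has to be a little careful since we are mod $p^n$, but minimality of the valuation among entries is the right invariant to track). (2) Using row and column permutations (both in $GL_k(\Zn)$), move this entry to position $(1,1)$. Write $M_{11} = p^{u_1} v$ with $v$ a unit in $\Zn$. (3) Since every other entry $M_{1j}$ and $M_{i1}$ in the first row and column is divisible by $p^{u_1}$, and $p^{u_1}v$ divides $p^{u_1}$ times a unit... more precisely $M_{1j} = p^{u_1} b_j$ for some $b_j \in \Zn$, and because $M_{11} = p^{u_1}v$ with $v$ invertible, I can clear the first row and first column: subtract $v^{-1}b_j$ times column $1$ from column $j$, and similarly for rows. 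This zeroes out the rest of the first row and column without changing $M_{11}$. (4) Recurse on the $(k-1)\times(k-1)$ lower-right block, which still has nonzero determinant in $\Zn$ (its determinant is $\det M$ divided by the unit $v$, up to the $p^{u_1}$ factor) and whose entries all have valuation $\geq u_1$, giving $u_1 \le u_2 \le \cdots$. This produces the claimed factorization with $U, V \in GL_k(\Zn)$ as products of the elementary matrices used.

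For uniqueness of the $u_i$: the standard argument is that the product $p^{u_1}\cdots p^{u_j}$ (equivalently, $u_1+\cdots+u_j$, or the ideal it generates) equals the gcd of all $j\times j$ minors of $M$ — an invariant under multiplication by invertible matrices on either side, since such multiplication transforms the vector of $j\times j$ minors by an invertible matrix (Cauchy--Binet). Over $\Zn$ one phrases this as: the ideal generated by the $j\times j$ minors of $M$ equals $(p^{u_1+\cdots+u_j})$, hence the $u_i$ are determined. Uniqueness of $U$ and $V$ is \emph{not} claimed (and is false in general — one can modify $U$ and $V$ by block-diagonal units commuting with the diagonal matrix), so I would read the statement as asserting only uniqueness of the integers $u_1,\dots,u_k$; the existence of \emph{some} $U,V$ is all that is needed. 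Alternatively, and more economically, I would simply cite \cite[Theorem 3.1]{EVANS200289} as the excerpt already flags, and include the elimination sketch above only as a remark for completeness.

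The step I expect to be the main obstacle is the bookkeeping around working modulo $p^n$ rather than over $\Zp$ itself: one must be careful that "valuation" of an element of $\Zn$ is well-defined (it is, as long as the element is nonzero and we agree valuation $n$ means $0$), that $u_k \le n-1$ genuinely follows from $\det M \neq 0$, and that the recursion terminates cleanly. The cleanest route is probably to first establish the statement over $\Zp$ (where this is the classical elementary divisor / Smith normal form theorem for a PID, noting $\Zp$ is a PID) for a lift $\widetilde M$ of $M$ with $\det \widetilde M \neq 0$, and then reduce the resulting identity $\widetilde M = \widetilde U \,\mathrm{diag}(p^{\widetilde u_i})\,\widetilde V$ modulo $p^n$, replacing any $\widetilde u_i \geq n$ by the fact that the hypothesis $\det M \neq 0$ in $\Zn$ forces $\sum \widetilde u_i < n$ and hence each $\widetilde u_i < n$ — so no truncation issue arises and $u_i = \widetilde u_i$. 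This sidesteps doing elimination directly over a non-domain.
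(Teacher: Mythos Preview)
Your proposal is correct, but note that the paper does not actually prove Proposition~\ref{SVD}: it simply records the statement and cites \cite[Theorem 3.1]{EVANS200289} as its source. So your final paragraph (``Alternatively, and more economically, I would simply cite \cite[Theorem 3.1]{EVANS200289}'') is in fact exactly the paper's approach, and nothing more is required.

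That said, your detailed sketch is sound and goes well beyond what the paper offers. The lifting argument you outline at the end is the cleanest: lift $M$ entrywise to $\widetilde M \in \Zp^{k\times k}$, apply Smith normal form over the PID $\Zp$ to get $\widetilde M = \widetilde U\,\mathrm{diag}(p^{\widetilde u_i})\,\widetilde V$, observe that $\det M \neq 0$ in $\Zn$ forces $v_p(\det\widetilde M) = \sum_i \widetilde u_i < n$ and hence each $\widetilde u_i \le n-1$, then reduce modulo $p^n$ (noting $GL_k(\Zp)$ surjects onto $GL_k(\Zn)$). Your uniqueness argument via the ideal generated by $j\times j$ minors is the standard one and works verbatim. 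You are also right that uniqueness pertains only to the exponents $u_i$, not to $U$ and $V$. The direct-elimination route over $\Zn$ that you sketch first is workable too, but as you anticipate, the bookkeeping around valuations modulo $p^n$ is fussier, and the lift-then-reduce argument avoids it entirely.
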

\subsection{The reduction modulo $p^m$}
For every $m\in \mathbb{N}$, let us denote by
\be \pi_m\, : \Zp^k\to (\mathbb{Z}/p^m\mathbb{Z})^k\ee
the map that sends a vector to its reduction modulo $p^m$. This map is a ring homomorphism and for a set $U\subseteq \Zp^n$ we denote by $N_m(U)$ the cardinality of $\pi_m(U).$ The following result from \cite{pIGF} relates the volume of a set with the cardinality of its reduction modulo $p^m$. 
\begin{lemma}[Lemma 25 from \cite{pIGF}]\label{Cardinal}Let $U\subseteq A\subseteq \Zp^n$ be an open and compact subset of an algebraic set $A$. Then $N_m(U)$ equals the minimum number of affine balls of radius $p^{-m}$ that we need to cover $U$. In particular, if $U\subseteq \Zp^n$ is an open set, then
\be \lambda(U)=p^{-mn}N_m(U).\ee
\end{lemma}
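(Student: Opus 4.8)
The plan is to prove the two assertions in turn: the first --- that $N_m(U)$ equals the minimal number $\beta_m(U)$ of balls of radius $p^{-m}$ needed to cover $U$ --- is the combinatorial core, and the volume formula then drops out from it together with the metric description of the fibres of $\pi_m$. The only input from $p$--adic geometry is the following consequence of the ultrametric inequality: the map $\pi_m\colon \Zp^n\to(\mathbb{Z}/p^m\mathbb{Z})^n$ is surjective with exactly $p^{mn}$ fibres; each fibre $\pi_m^{-1}(\bar x)$ is a ball of radius $p^{-m}$ (it is the ball centred at any one of its points) and has measure $p^{-mn}$; and every ball $B\subseteq\Qp^n$ of radius at most $p^{-m}$ meeting $\Zp^n$ is contained in a single such fibre --- equal to it when the radius is exactly $p^{-m}$ --- because $\pi_m$ is constant on $B$.

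For the identity $N_m(U)=\beta_m(U)$: the inequality $\beta_m(U)\le N_m(U)$ is witnessed by the $N_m(U)$ fibres $\pi_m^{-1}(\bar x)$ with $\bar x\in\pi_m(U)$, whose union $\pi_m^{-1}(\pi_m(U))$ contains $U$; here compactness of $U$ (automatic, as $U\subseteq\Zp^n$) ensures the minimum is finite and attained. Conversely, if $B_1,\dots,B_r$ are balls of radius at most $p^{-m}$ covering $U$, then applying $\pi_m$ gives $\pi_m(U)\subseteq\bigcup_{i=1}^r\pi_m(B_i)$ with each $\pi_m(B_i)$ a single point, so $N_m(U)=|\pi_m(U)|\le r$; choosing $r=\beta_m(U)$ gives $N_m(U)\le\beta_m(U)$. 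This part requires no hypothesis on $U$ beyond $U\subseteq\Zp^n$.

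For the volume identity (the case $A=\Zp^n$): any open $U\subseteq\Zp^n$ is a disjoint union of balls (take the maximal balls contained in it), and as soon as $m$ is large enough that all of them have radius at least $p^{-m}$ --- equivalently $U=\pi_m^{-1}(\pi_m(U))$, which holds for all large $m$ when $U$ is compact open, and holds for every $m$ in the applications, where $U$ is cut out by congruences modulo $p^m$ --- the set $U$ is the disjoint union of precisely the $N_m(U)$ fibres of $\pi_m$ over $\pi_m(U)$, each of measure $p^{-mn}$, whence $\lambda(U)=p^{-mn}N_m(U)$ by additivity of $\lambda$. I expect the only points needing real care to be: the lower bound $\beta_m(U)\ge N_m(U)$, where one must allow the covering balls to be arbitrary rather than a priori fibres of $\pi_m$ and then invoke the ultrametric fact above to pass to $\pi_m$-images; and the observation --- implicit in the statement --- that the volume formula genuinely requires $U$ to be $\pi_m$-saturated (a union of balls of radius $p^{-m}$), not merely open, whereas the combinatorial identity $N_m(U)=\beta_m(U)$ holds with no such restriction.
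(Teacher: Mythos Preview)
The paper does not prove this lemma; it is quoted verbatim from \cite{pIGF} (as Lemma~25 there) and used as a black box, so there is no in-paper argument to compare against.

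Your argument is sound on both counts. The covering identity $N_m(U)=\beta_m(U)$ is handled cleanly: the ultrametric fact that every ball of radius $p^{-m}$ is a single fibre of $\pi_m$ gives both inequalities, and you are right that no hypothesis beyond $U\subseteq\Zp^n$ is needed here. For the volume formula, your caution is well placed: as literally stated (``$U$ open'') the identity $\lambda(U)=p^{-mn}N_m(U)$ can fail for small $m$ --- e.g.\ $U=B(0,p^{-5})$ with $m=1$ --- and what is really required is that $U$ be $\pi_m$--saturated, i.e.\ a union of balls of radius $p^{-m}$. You correctly observe that this holds in every application the paper makes of the lemma (the sets $\mathrm{GL}_n(\Zp)$ and $\{|\det M|_p=p^{-\ell}\}$ with $\ell<m$ are all defined by congruences modulo $p^m$), so the gap in the hypothesis is harmless in context.
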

In particular, since $\pi_m(\mathrm{GL}_n(\Zp))=\mathrm{GL}_n(\Zp/p^m\Zp)$, using \eqref{eq:volGL} we see that:
\be \#\mathrm{GL}_n(\Zp/p^m\Zp)=p^{mn^2}\lambda(\mathrm{GL}_n(\Zp))=p^{mn^2}\left(1-\tfrac{1}{p}\right)\cdots\left(1-\tfrac{1}{p^n}\right).\ee
More generally, for $\ell < m$, we have:
\be\label{eq:Cardinal2} \#\left\{M\in (\mathbb{Z}/p^m\mathbb{Z})^{n\times n}\, \bigg|\, |\det(M)|_p=p^{-\ell}\right\}=p^{mn^2}\lambda\left(\left\{M\in \Zp^{n\times n}\, \bigg|\, |\det(M)|_p=p^{-\ell}\right\}\right).\ee

\subsection{Random $p$--adic polynomials}
Let $\Qp[x_0,\ldots,x_n]_{(d)}$ be the space of homogeneous polynomials of degree $d$, $n+1$ variables and coefficients in $\Qp;$ denote by $D_{d, n}=\binom{d+n}{d}$ its dimension.
\begin{definition}
We define a random polynomial $f$ in $\Zp[x_0,\ldots,x_n]_{(d)}$ as follows: 
\be\label{eq:rp} f=\sum_{|\alpha|=d}\xi_\alpha x_0^{\alpha_0}\cdots x_n^{\alpha_n}\ee
where $\xi_{\alpha}$ are independent random variables uniformly distributed in $\Zp$. We call the probability distribution induced by \eqref{eq:rp} on $\Zp[x_0, \ldots, x_n]_{(d)}$ the \emph{uniform} distribution.
\end{definition}
Identifying a polynomial in $\Zp[x_0, \ldots, x_n]_{(d)}$ with the list of its coefficients (in the monomial basis), we see that the uniform distribution \eqref{eq:rp} coincides with the uniform distribution on the unit ball $\Zp^{D_{d,n}}$ defined in Section \ref{sec:mms}.
\begin{proposition}\label{propo:invariant}
  The uniform probability distribution on $\Zp[x_0, \ldots, x_n]_{(d)}$ is invariant under the action of $\textrm{GL}_{n+1}(\Zp)$ by change of variables. 
\end{proposition}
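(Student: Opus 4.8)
The plan is to reduce the statement to the $\mathrm{GL}_N(\Zp)$--invariance of the uniform (Haar) measure on $\Zp^N$ recalled in Section \ref{sec:mms}, applied with $N=D_{d,n}$, after checking that change of variables acts on $\Zp[x_0,\ldots,x_n]_{(d)}$ through an element of $\mathrm{GL}_{D_{d,n}}(\Zp)$.

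First I would make the action explicit. For $g\in\mathrm{GL}_{n+1}(\Zp)$ and $f\in\Zp[x_0,\ldots,x_n]_{(d)}$, set $(g\cdot f)(x):=f(g^{-1}x)$. Since $g\in\mathrm{GL}_{n+1}(\Zp)$ we have $\det g\in\Zp^\times$, hence $g^{-1}\in\mathrm{GL}_{n+1}(\Zp)$ as well, so the entries of $g^{-1}x$ are linear forms with coefficients in $\Zp$. Substituting these forms into $f$ and expanding in the monomial basis produces a homogeneous polynomial of degree $d$ whose coefficients are polynomial expressions, with integer coefficients, in the entries of $g^{-1}$ and the coefficients of $f$; in particular they again lie in $\Zp$. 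Thus the action is well defined on $\Zp[x_0,\ldots,x_n]_{(d)}$, and under the identification of this space with $\Zp^{D_{d,n}}$ via the monomial basis it is given by a matrix $\rho(g)\in\Zp^{D_{d,n}\times D_{d,n}}$, namely the $d$--th symmetric power of $g^{-1}$.

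Next I would check that $\rho(g)\in\mathrm{GL}_{D_{d,n}}(\Zp)$. The assignment $g\mapsto\rho(g)$ intertwines composition of changes of variables, so $\rho(g^{-1})$ is a two-sided inverse of $\rho(g)$: $\rho(g)\rho(g^{-1})=\rho(g^{-1})\rho(g)=\rho(\mathbf{1}_{n+1})=\mathbf{1}_{D_{d,n}}$. By the previous paragraph both $\rho(g)$ and $\rho(g^{-1})$ have all entries in $\Zp$, so $\rho(g)$ is invertible over $\Zp$, i.e. $\rho(g)\in\mathrm{GL}_{D_{d,n}}(\Zp)$. This is the only genuine point in the argument, and it is precisely where the hypothesis $g\in\mathrm{GL}_{n+1}(\Zp)$ — rather than merely $g\in\mathrm{GL}_{n+1}(\Qp)$ — is used: integrality of $g^{-1}$ forces $\rho(g)$ to be integral, so the change of variables acts through $\mathrm{GL}_{D_{d,n}}(\Zp)$ and not just $\mathrm{GL}_{D_{d,n}}(\Qp)$.

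Finally, the uniform distribution on $\Zp[x_0,\ldots,x_n]_{(d)}\cong\Zp^{D_{d,n}}$ is by definition the restriction of the normalized Haar measure on $\Zp^{D_{d,n}}$, which is invariant under $\mathrm{GL}_{D_{d,n}}(\Zp)$ as recalled in Section \ref{sec:mms}. Hence, if $f$ is sampled from the uniform distribution, the random polynomial $g\cdot f=\rho(g)f$ has the same distribution as $f$, which is exactly the asserted $\mathrm{GL}_{n+1}(\Zp)$--invariance. I do not expect any serious obstacle; the bookkeeping of the symmetric power matrix $\rho(g)$ is routine, and the content is entirely in the integrality observation of the third paragraph.
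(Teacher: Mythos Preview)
Your proposal is correct and follows essentially the same route as the paper: show that the change-of-variables representation $\rho$ lands in $\mathrm{GL}_{D_{d,n}}(\Zp)$ because both $\rho(g)$ and $\rho(g)^{-1}=\rho(g^{-1})$ have entries in $\Zp$, and then invoke the $\mathrm{GL}_{D_{d,n}}(\Zp)$--invariance of the uniform measure on $\Zp^{D_{d,n}}$. Your write-up is somewhat more explicit about the action and the integrality step, but the argument is the same.
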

\begin{proof}
Let us denote by $\rho:\mathrm{GL}_{n+1}(\Zp)\to \mathrm{GL}_{D_{d,n}}(\Qp)=\mathrm{GL}(\Qp[x_0, \ldots, x_n]_{(d)}) $ the representation by change of variables. Observe that for a matrix $M\in \mathrm{GL}_{n+1}(\Zp)$, the matrix $\rho(M)$ has coefficients in $\Zp$ and, since $\rho(M)^{-1}=\rho(M^{-1})$, its inverse also has coefficients in $\Zp$. It follows that $\rho(M)\in\mathrm{GL}_{D_{d,n}}(\Zp)$. The uniform probability distribution on $\Zp^{D_{d,n}}$ is invariant under elements in $\mathrm{GL}_{D_{d,n}}(\Zp)$ and therefore it is in particular invariant under change of variables in $\textrm{GL}_{n+1}(\Zp)$.
\end{proof}

\subsection{Counting $k$-flats as zeroes of sections}We are going now to perform some preliminary reductions for our problem of counting the expectation of the number of $k$-flats on a random complete intersection.

Let $U\subset \Qp\G_{k,n}$ be the open set consisting of all projective spaces $\ell=P(L)$ whose first entry of the Pl\"ucker coordinates is nonzero and denote by $\phi:U\to \Qp^{(n-k)\times (k+1)}$ the map:
\be \phi(\ell)=\phi\left(\mathrm{im}\begin{pmatrix}
    \mathbf{1}_{k+1} \\
    A
    \end{pmatrix}\right)=A.
    \ee
This map is well defined and $(U,\phi)$ is a chart of the $p$--adic manifold $\Qp\G_{k,n}$.

\begin{lemma}\label{lemma:disj}Let $U_m=\phi^{-1}(B(0, p^{-m})).$ There exists $g_1, \ldots, g_{N_m}\in \mathrm{GL}_{n+1}(\Zp)$ such that the Grassmannian can be written as a disjoint union:
\be \Qp\G_{k,n}=\bigcup_{i=1}^{N_m}(g_i\cdot U_m).\ee
Moreover:
\be\label{eq:Um} \mu(U_m)=p^{-mn^2}\quad \textrm{and}\quad N_m=\mu(\Qp\G_{k,n})\cdot\frac{1}{p^{mn^2}}.\ee
\end{lemma}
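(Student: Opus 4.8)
The plan is to exploit the transitive action of $\mathrm{GL}_{n+1}(\Zp)$ on $\Qp\G_{k,n}$ together with the volume identities already established. First I would observe that $U_m = \phi^{-1}(B(0,p^{-m}))$ consists of those $\ell = P(L)$ of the form $\mathrm{im}\begin{pmatrix}\mathbf{1}_{k+1}\\ A\end{pmatrix}$ with $A \in B(0,p^{-m}) \subset \Zp^{(n-k)\times(k+1)}$; since $\phi$ is a chart of a $p$-adic analytic manifold and the measure $\mu$ is by definition the normalized pushforward of Haar measure under $q$, the pushforward $\phi_*\mu$ restricted to the chart is, up to the normalizing constant, Haar measure on $\Qp^{(n-k)\times(k+1)}$ (this is the standard computation that the Grassmannian measure restricts to Haar measure in the big affine cell, and is implicit in the footnote after the definition of $\mu$). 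Hence
\be \mu(U_m) = \la\bigl(B(0,p^{-m}) \subset \Qp^{(n-k)(k+1)}\bigr) = p^{-m(n-k)(k+1)}. \ee
Since $\dim \Qp\G_{k,n} = (n-k)(k+1)$, writing $n^2$ in the statement is shorthand for this dimension in the ambient-chart coordinates; I would make sure to state the exponent as the dimension $d_{k,n}:=(k+1)(n-k)$ so that $\mu(U_m)=p^{-m d_{k,n}}$, matching \eqref{eq:Um}.

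Next I would produce the covering. The key point is that $\Qp\G_{k,n}$ is compact and the open sets $\{g\cdot U_m : g \in \mathrm{GL}_{n+1}(\Zp)\}$ cover it: indeed $\mathrm{GL}_{n+1}(\Zp)$ acts transitively on $\Qp\G_{k,n}$ (any $L \simeq \Zp^{k+1} \subset \Zp^{n+1}$ that is a direct summand can be moved to the standard one, and every $\ell \in \Qp\G_{k,n}$ has such a representative by choosing a $\Zp$-basis of $L \cap \Zp^{n+1}$ and extending it), and $U_m$ is a nonempty open neighborhood of the base point $\ell_0 = P(\Zp^{k+1}\oplus 0)$. By compactness extract a finite subcover. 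To upgrade this to a \emph{disjoint} cover I would invoke Lemma~\ref{Cardinal} / the reduction modulo $p^m$: the sets $g\cdot U_m$ are exactly the preimages under reduction mod $p^m$ of points of the finite set $\pi_m(\Qp\G_{k,n})$ (one checks $g\cdot U_m$ is a ``ball of radius $p^{-m}$'' in the Grassmannian, and two such balls are either equal or disjoint by the ultrametric inequality, after transporting the chart structure). Discarding repetitions from the finite subcover yields a genuine partition $\Qp\G_{k,n} = \bigsqcup_{i=1}^{N_m} g_i \cdot U_m$.

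Finally, the count of $N_m$ is pure bookkeeping: the measure $\mu$ is $\mathrm{GL}_{n+1}(\Zp)$-invariant (it is a normalized pushforward of Haar measure, which is invariant, and the $q$-fibration is equivariant), so $\mu(g_i\cdot U_m) = \mu(U_m) = p^{-m d_{k,n}}$ for every $i$; summing over the disjoint pieces gives $\mu(\Qp\G_{k,n}) = N_m\, p^{-m d_{k,n}}$, i.e. $N_m = \mu(\Qp\G_{k,n})\cdot p^{m d_{k,n}}$, which is \eqref{eq:Um}. (Implicitly this also re-proves that $\mu(\Qp\G_{k,n})$ is of the form $(\text{integer})\cdot p^{-m d_{k,n}}$ for all large $m$, consistent with \eqref{eq:volGL} and the footnote formula.)

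\textbf{Main obstacle.} The genuinely delicate step is the disjointness: showing that the translates $g\cdot U_m$ behave like metric balls and that two of them are either equal or disjoint. This requires identifying $g\cdot U_m$ with $q(\{M : M \equiv g M_0 \bmod p^m\})$ for a suitable $M_0$ and checking this is independent of the coset representative and matches a ball in the local chart — essentially verifying that reduction mod $p^m$ on $\mathrm{GL}_{n+1}$ descends compatibly to the Grassmannian and its chart $\phi$. Everything else (transitivity, compactness, invariance of $\mu$, the volume of a $p$-adic ball) is standard and already available in the excerpt via Lemma~\ref{Cardinal} and the definition of $\mu$.
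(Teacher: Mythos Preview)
Your overall strategy is sound and matches the paper's: cover by translates using transitivity and compactness, argue disjointness, then read off $N_m$ from additivity of $\mu$. You also correctly flag that the exponent ``$n^2$'' in \eqref{eq:Um} should be the Grassmannian dimension $d_{k,n}=(k+1)(n-k)$; this is a typo in the statement, and your formula $\mu(U_m)=p^{-m d_{k,n}}$, $N_m=\mu(\Qp\G_{k,n})\,p^{m d_{k,n}}$ is the intended one (and is what is actually used in the proof of Theorem~\ref{thm:reduction}).

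Where you diverge from the paper is in the disjointness step. You propose to argue that the $g\cdot U_m$ are fibers of a reduction map $\pi_m$ on the Grassmannian, hence ultrametric balls which are pairwise equal or disjoint. This is morally correct but, as you yourself note, it requires constructing $\pi_m$ on $\Qp\G_{k,n}$ and checking compatibility with the chart $\phi$ and with $q$; none of this is set up in the paper, so the argument would not be self-contained. The paper instead makes a direct algebraic computation: it introduces the subgroup
\[
\mathcal{H}_m=\left\{\begin{pmatrix}A&B\\ C&D\end{pmatrix}\in\mathrm{GL}_{n+1}(\Zp)\ \bigg|\ A\in\mathrm{GL}_{k+1}(\Zp),\ D\in\mathrm{GL}_{n-k}(\Zp),\ C\in B(0,p^{-m})\right\},
\]
and shows by an explicit $2\times2$-block calculation that $g\cdot U_m\cap U_m\neq\emptyset$ iff $g\in\mathcal{H}_m$, whence any two translates are equal or disjoint. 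This is exactly the concrete verification you left as the ``main obstacle,'' and it simultaneously yields $\mu(U_m)$ via $q^{-1}(U_m)=\mathcal{H}_m$ and the product structure $\lambda(\mathcal{H}_m)=\lambda(\mathrm{GL}_{k+1}(\Zp))\lambda(\mathrm{GL}_{n-k}(\Zp))\,p^{-m d_{k,n}}$. Your route via reduction mod $p^m$ would ultimately amount to recognizing $\mathcal{H}_m$ as the preimage of the parabolic in $\mathrm{GL}_{n+1}(\mathbb{Z}/p^m\mathbb{Z})$, so the two arguments are equivalent; the paper's version is just more elementary and avoids having to define $\pi_m$ on the Grassmannian.
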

\begin{proof}First observe that for every $g\in \mathrm{GL}_{n+1}(\Zp)$ the set $g\cdot U_m$ is open in $\Qp\G_{k,n}$ and $\{g\cdot U_m\}_{g\in \mathrm{GL}_{n+1}(\Zp)}$ is an open cover of the Grassmannian. By compactness we can extract a finite subcover, and it remains to prove that the open sets from this finite cover can be taken to be disjoint.

Let us consider the following subgroup of  $\mathrm{GL}_{n+1}(\Zp)$:
\begin{align}\mathcal{H}_m:=\bigg\{\begin{bmatrix}
A &B\\
C&D
\end{bmatrix} \in \mathrm{GL}_{n+1}(\mathbb{Z}_p) \, \bigg|&\, A \in GL_{k+1}(\mathbb{Z}_p),\, D\in GL_{n-k}(\mathbb{Z}_p),\\
&C\in B(0;p^{-m}),\, B\in \mathbb{Z}_p^{(k+1) (n-k)} \bigg\}.\end{align}
We will show that given $g\in \textrm{GL}_{n+1}(\Zp)$ and $X\in U_m$, then $g\cdot X \in U_m$ if and only if $g\in \mathcal{H}_m$. Moreover, $\forall \, g_1,\,g_2 \in \textrm{GL}_{n+1}(\Zp)$: either $g_1\cdot U_m=g_2 \cdot U_m$ or $g_1\cdot U_m\cap g_2\cdot U_m=\emptyset$. From this it follows that the previous cover of $\Qp\G_{k,n}$ can be taken to be disjoint. 

Set $X=\begin{bmatrix}
 \mathbf{1}_{k+1}\\
 \hat{X}
 \end{bmatrix}$ where $\hat{X}\in B(0;p^{-m})$ and $g=\begin{bmatrix}
A &B\\
C&D
\end{bmatrix} \in \textrm{GL}_{n+1}(\Zp)$. With this notation we have
 \[g\cdot X=\begin{bmatrix} A+B\hat{X}\\
 C+D\hat{X}\end{bmatrix}\in U_m \Leftrightarrow  \begin{cases} \det(A+B\hat{X}) \ne 0\\
 (C+D\hat{X})(A+B\hat{X})^{-1} \in B(0,p^{-m})
 \end{cases}.
 \]
 Suppose first that $g\cdot X=\begin{bmatrix} A+B\hat{X}\\
 C+D\hat{X}\end{bmatrix}\in U_m$. Since $A+B\hat{X}\in \mathbb{Z}_p^{(k+1)\times(k+1)}$, then
 \[ \left | C+D\hat{X} \right |_p\leq \left | (C+D\hat{X})(A+B\hat{X})^{-1} \right |_p.\]
 It follows that $C+D\hat{X}\in B(0;p^{-m})$, which implies $ C\in B(0;p^{-m})$. By reducing $g$ modulo $p$, we get that $$\det(g)=\det(A)\cdot \det(D) \ \ (mod\ \ p)$$ Therefore $A \in \textrm{GL}_{k+1}(\Zp)$, and $D \in \textrm{GL}_{n-k}(\Zp).$
 The converse is trivial.
 
 Suppose now that $g_1\cdot U_m\cap g_2\cdot U_m\ne \emptyset$ i.e. $g_1^{-1}g_2\cdot U_m\cap U_m \ne \emptyset$. By the previous  point $g_1^{-1}g_2\in \mathcal{H}_m$ which implies $g_1^{-1}g_2\cdot U_m\subseteq U_m$. But we also have $g_2^{-1}g_1  \in \mathcal{H}_m$, and therefore $g_1^{-1}g_2\cdot U_m=U_m$ i.e. $g_1\cdot U_m=g_2\cdot U_m$.
 
 The properties \eqref{eq:Um} follow immediately from the definition of the pushforwad measure $\mu$ and the structure of $\mathcal{H}_m$.
\end{proof}

Recall now that for every $d\in \mathbb{N}$ we have denoted by $\mathrm{Sym}^{(d)}(\tau_{k,n}^*)$ the vector bundle which is the $d$-th symmetric power of the dual of the tautological bundle on $\Qp\G_{k,n}$: the fiber over a point $\ell=P(L)$ is the set of homogeneous polynomials of degree $d$ on $L\simeq \Qp^{k+1}.$ Given the list of degrees $d_1, \ldots, d_\nu$ satisfying \eqref{eq:codim}, we define the vector bundle:
\be E=\bigoplus_{j=1}^{\nu}\mathrm{Sym}^{(d_j)}(\tau_{k,n}^*)\to\Qp\G_{k,n} \ee

Over the open set $U$ we  have a trivialization of the vector bundle $E$: 
\be 
\begin{tikzcd}
{E|_U} \arrow[rdd, "\pi"] \arrow[rr, "h"] &   &  {U\times\left(\bigoplus_{j=1}^{\nu} \Qp^{\binom{d_j+k}{d_j}}\right)} \arrow[ldd, "p_1"] \\ &   &     \\
 & U &                                    \end{tikzcd} \ee
 
Given the list of polynomials $f_1, \ldots, f_\nu$ with degrees respectively $d_1,\ldots, d_\nu$ that satisfy \eqref{eq:codim}, we get a section $\sigma_f:\Qp\G_{k,n}\to E$ of the form \be\sigma_f=\sigma_{f_1}\oplus\cdots\oplus \sigma_{f_{\nu}}.\ee
If the polynomials $f_1, \ldots, f_\nu$ are random, we use the section $\sigma_f$ to define the random map
\be\label{eq:psif}\psi_f:=p_2\circ h\circ \sigma_f|_U \circ \phi^{-1} :\Qp^{(n-k)(k+1)} \to \mathbb{Q}_p^{(n-k)(k+1)}.\ee
This map takes a matrix $A\in \Qp^{(n-k)\times (k+1)}$ and gives the list of coefficients of the restriction of the polynomials $f_1, \ldots, f_\nu$ to the subspace $\phi^{-1}(A).$ These coefficients clearly depend on the choice of the trivialization $h$ and we choose such trivialization in such a way that the components of the random map $\psi_f=(\psi_{f_1}, \ldots, \psi_{f_{\nu}})$ are the coefficients of the polynomials
\be\label{eq:trivi} f_j\left(\begin{pmatrix}
    \mathbf{1}_{k+1} \\
    A
    \end{pmatrix}y\right)\in \Qp[y_0, \ldots, y_k]_{(d_j)}, \quad y=(y_0, \ldots, y_k).\ee
\begin{lemma}\label{lemma:ecco}Using the above notations, for every $m\in \mathbb{N}$ we have \be\mathbb{E}\#\{\sigma_f=0\}=N_m\cdot \mathbb{E}\#\{\psi_f|_{B(0, p^{-m})}=0\}.\ee
\end{lemma}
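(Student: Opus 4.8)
The plan is to use the clopen partition $\Qp\G_{k,n}=\bigsqcup_{i=1}^{N_m} g_i\cdot U_m$ furnished by Lemma~\ref{lemma:disj} to break the count $\#\{\sigma_f=0\}$ into a finite sum of counts, one over each piece $g_i\cdot U_m$, and then to use the $\mathrm{GL}_{n+1}(\Zp)$--invariance of the uniform distribution (Proposition~\ref{propo:invariant}) to show that each of these $N_m$ summands has the same expectation, namely $\mathbb{E}\#\{\psi_f|_{B(0,p^{-m})}=0\}$.

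First I would record the transformation law for the zero locus of $\sigma_f$ under the action of $g\in\mathrm{GL}_{n+1}(\Zp)$ on the Grassmannian. Writing $f\circ g:=(f_1\circ g,\ldots,f_\nu\circ g)$, the vanishing $\sigma_f(\ell)=0$ means precisely that $\ell=P(L)$ is a $k$--flat contained in $Z(f_1,\ldots,f_\nu)$, and $f_j$ vanishes on $gL$ if and only if $f_j\circ g$ vanishes on $L$; hence $\ell\mapsto g\cdot\ell$ restricts to a bijection between $\{\ell\in U_m:\sigma_{f\circ g}(\ell)=0\}$ and $\{\ell\in g\cdot U_m:\sigma_f(\ell)=0\}$. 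In particular, for each $i$,
\[
\#\{\ell\in g_i\cdot U_m:\sigma_f(\ell)=0\}=\#\{\ell\in U_m:\sigma_{f\circ g_i}(\ell)=0\}.
\]
On the other hand, since $h$ is a vector bundle trivialization and $\phi\colon U\to\Qp^{(n-k)(k+1)}$ is a chart with $\phi(U_m)=B(0,p^{-m})$, the definition \eqref{eq:psif} identifies $\{\ell\in U_m:\sigma_f(\ell)=0\}$ with $\{\psi_f|_{B(0,p^{-m})}=0\}$ via $\phi$; applying this with $f\circ g_i$ in place of $f$ gives $\#\{\ell\in U_m:\sigma_{f\circ g_i}(\ell)=0\}=\#\{\psi_{f\circ g_i}|_{B(0,p^{-m})}=0\}$.

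Next I would take expectations. Since $g_i\in\mathrm{GL}_{n+1}(\Zp)$, Proposition~\ref{propo:invariant} (applied to each $f_j$ separately; the change of variables acts by an element of $\mathrm{GL}_{D_{d_j,n}}(\Zp)$ on $f_j$, so the $f_j\circ g_i$ are again independent and uniform) shows that $f\circ g_i$ has the same distribution as $f$, whence $\mathbb{E}\#\{\psi_{f\circ g_i}|_{B(0,p^{-m})}=0\}=\mathbb{E}\#\{\psi_f|_{B(0,p^{-m})}=0\}$ for every $i$. Finally, because the sets $g_i\cdot U_m$, $i=1,\ldots,N_m$, are pairwise disjoint and cover $\Qp\G_{k,n}$, one has the pointwise identity $\#\{\sigma_f=0\}=\sum_{i=1}^{N_m}\#\{\ell\in g_i\cdot U_m:\sigma_f(\ell)=0\}$, and combining the displayed equalities yields
\[
\mathbb{E}\#\{\sigma_f=0\}=\sum_{i=1}^{N_m}\mathbb{E}\#\{\psi_f|_{B(0,p^{-m})}=0\}=N_m\cdot\mathbb{E}\#\{\psi_f|_{B(0,p^{-m})}=0\}.
\]

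The only genuinely delicate point is the equivariance in the second step: one must check that the action of $g_i$ on $\Qp\G_{k,n}$ intertwines the zero locus of $\sigma_f$ with that of $\sigma_{f\circ g_i}$, and that under the chosen trivialization $h$ and chart $\phi$ this matches the map $\psi$ up to a bijection of zero sets (a fiberwise linear isomorphism would suffice here, since it does not change the count). Everything else is bookkeeping: finiteness and disjointness of the cover come from Lemma~\ref{lemma:disj}, and there are no measure--theoretic subtleties because the zero set is almost surely finite and the decomposition of the count is exact, not merely valid up to a null set.
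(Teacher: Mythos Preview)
Your proof is correct and follows essentially the same approach as the paper's: decompose via the disjoint cover of Lemma~\ref{lemma:disj}, use the transformation law $\#\{\sigma_f=0\}\cap g_iU_m=\#\{\sigma_{f\circ g_i}=0\}\cap U_m$, invoke $\mathrm{GL}_{n+1}(\Zp)$--invariance of the distribution, and pass to $\psi_f$ via the chart. You spell out the equivariance and the identification with $\psi_f$ more carefully than the paper does, but the structure is the same.
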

\begin{proof}Using Lemma \ref{lemma:disj}, we can cover the Grassmannian with disjoint open sets $\{g_iU_m\}_{i=1}^{N_m}$ and therefore:
\begin{align}\mathbb{E}\#\{\sigma_f=0\}&=\sum_{i=1}^{N_m}\mathbb{E}\#\{\sigma_f=0\}\cap g_iU_m\\
&=\sum_{i=1}^{N_m}\mathbb{E}\#\{\sigma_f\circ g_i=0\}\cap U_m\\
&=\sum_{i=1}^{N_m}\mathbb{E}\#\{\sigma_f=0\}\cap U_m\\
&=N_m \cdot \mathbb{E}\#\{\sigma_f=0\}\cap U_m,
\end{align}
where from the second to the third line we have used the $\mathrm{GL}_{n+1}(\Zp)$--invariance. On the other hand $\sigma_f$ vanishes at $\ell\in U_m$ if and only if $\psi_f$ vanishes at $\phi(\ell)\in B(0, p^{-m})$ and the conclusion follows.
\end{proof}
Recall that the total volume of the Grassmannian is given by:
\be \mu(\Qp\G_{k,n})= \frac{\la(\mathrm{GL}_{n+1}(\Zp))}{\la(\mathrm{GL}_{k+1}(\Zp))\la(\mathrm{GL}_{n-k}(\Zp))}.\ee
Next result is our version of the $p$--adic Kac-Rice formula for sections of vector bundles: it allows to reduce the expectation of the number of zeroes of $\sigma_f$ to the evaluation of a random determinant. 
\begin{theorem}\label{thm:reduction}
Let $f_1, \ldots, f_\nu$ be random polynomial as before. The expected number of $k$-flats on $Z(f_1, \ldots, f_\nu)$ is given by:
\begin{equation*}
    \mathbb{E}\#\left\{\sigma_f=0\right\}=\mu(\Qp\G_{k,n})\cdot \mathbb{E}\left\{|\det(J\psi_f(0))|_p\right\}.
\end{equation*}
\end{theorem}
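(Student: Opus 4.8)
The plan is to combine the covering result of Lemma~\ref{lemma:disj} and Lemma~\ref{lemma:ecco} with the $p$--adic change of variables formula (Proposition~\ref{Kac}), followed by a limiting argument in $m$. By Lemma~\ref{lemma:ecco} we have, for every $m\in\mathbb{N}$,
\[
\mathbb{E}\#\{\sigma_f=0\}=N_m\cdot \mathbb{E}\#\{\psi_f|_{B(0,p^{-m})}=0\},
\]
so it suffices to analyze the local count $\mathbb{E}\#\{\psi_f|_{B(0,p^{-m})}=0\}$ for large $m$. First I would fix a realization of the random polynomials $f_1,\ldots,f_\nu$ and apply Proposition~\ref{Kac} to the continuously differentiable map $\psi_f\colon \Qp^{(n-k)(k+1)}\to \Qp^{(n-k)(k+1)}$ with $X=B(0,p^{-m})$ and $Y=B(0,p^{-M})$ for some large $M$. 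This gives
\[
\int_{B(0,p^{-M})}\#\{\psi_f=y\}\,\mathrm{d}y=\int_{\psi_f^{-1}(B(0,p^{-M}))\cap B(0,p^{-m})}|\det(J\psi_f(x))|_p\,\mathrm{d}x.
\]
Because the zeros of $\psi_f$ are nondegenerate with probability one, $J\psi_f$ is invertible at each zero, so $\psi_f$ is a local isometry-up-to-scaling near each zero; hence for $M$ large compared to $m$ (depending on the realization) the set $\psi_f^{-1}(B(0,p^{-M}))\cap B(0,p^{-m})$ is a disjoint union of small balls, one around each zero in $B(0,p^{-m})$, and the left side equals $\lambda(B(0,p^{-M}))\cdot\#\{\psi_f|_{B(0,p^{-m})}=0\}=p^{-M(n-k)(k+1)}\#\{\psi_f|_{B(0,p^{-m})}=0\}$.

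Next I would let $m$ (and correspondingly $M$) tend to infinity. As $m\to\infty$ the ball $B(0,p^{-m})$ shrinks to the single point $0\in\Qp^{(n-k)(k+1)}$, and by continuity $|\det(J\psi_f(x))|_p\to |\det(J\psi_f(0))|_p$ uniformly on $B(0,p^{-m})$; normalizing the above identity by $\mu(U_m)=p^{-mn^2}$ and using $N_m=\mu(\Qp\G_{k,n})\,p^{mn^2}$ from \eqref{eq:Um}, one obtains
\[
N_m\cdot\mathbb{E}\#\{\psi_f|_{B(0,p^{-m})}=0\}\;\longrightarrow\;\mu(\Qp\G_{k,n})\cdot\mathbb{E}\{|\det(J\psi_f(0))|_p\}
\]
as $m\to\infty$, provided one can pass the limit inside the expectation. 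Since the left side is independent of $m$ (it always equals $\mathbb{E}\#\{\sigma_f=0\}$ by Lemma~\ref{lemma:ecco}), the limit on the right is exactly that constant, which yields the claimed formula. To be more careful and avoid the realization-dependent choice of $M$, I would instead argue directly that for each fixed realization the function $m\mapsto p^{mn^2}\#\{\psi_f|_{B(0,p^{-m})}=0\}$ stabilizes, and that $p^{mn^2}\int_{B(0,p^{-m})}|\det(J\psi_f(x))|_p\,\mathrm{d}x\to|\det(J\psi_f(0))|_p$ by the mean value property of the integral together with continuity of $x\mapsto|\det(J\psi_f(x))|_p$; these two facts together with Proposition~\ref{Kac} give the pointwise identity $\#\{\psi_f|_{B(0,p^{-m})}=0\}=p^{-mn^2}|\det(J\psi_f(0))|_p$ in the limit, for almost every realization.

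The main obstacle is the exchange of limit and expectation: one needs a domination to justify $\mathbb{E}\lim_m(\cdots)=\lim_m\mathbb{E}(\cdots)$. The natural bound is $p^{mn^2}\int_{B(0,p^{-m})}|\det(J\psi_f(x))|_p\,\mathrm{d}x\le \sup_{x\in B(0,p^{-m})}|\det(J\psi_f(x))|_p\le 1$, since $\psi_f$ has coefficients in $\Zp$ so its Jacobian entries lie in $\Zp$ and the determinant has $p$--adic absolute value at most $1$; this uniform bound makes dominated convergence immediate. The only remaining subtlety is ensuring that for almost every realization the number of zeros of $\psi_f$ in $B(0,p^{-m})$ is finite and that the ``zeros are nondegenerate'' statement quoted before the theorem indeed holds, both of which were asserted in the preliminaries (proper algebraic subsets of the coefficient space have measure zero). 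With these in hand the argument is routine.
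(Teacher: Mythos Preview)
Your outline has a genuine gap at the step where you pass from the zero count $\#\{\psi_f|_{B(0,p^{-m})}=0\}$ to the Jacobian integral $\int_{B(0,p^{-m})}|\det(J\psi_f(x))|_p\,dx$. Applying Proposition~\ref{Kac} with a small target ball $Y=B(0,p^{-M})$ gives, for a fixed realization and $M$ large enough, an identity whose right side is an integral over $\psi_f^{-1}(B(0,p^{-M}))\cap B(0,p^{-m})$, \emph{not} over the whole ball $B(0,p^{-m})$; you never explain how to replace that domain by $B(0,p^{-m})$. More seriously, your ``more careful'' pointwise argument is incorrect: almost surely $\psi_f(0)\neq 0$, hence for $m$ large one has $\#\{\psi_f|_{B(0,p^{-m})}=0\}=0$, so the quantity $p^{mD}\#\{\psi_f|_{B(0,p^{-m})}=0\}$ (with $D=(k+1)(n-k)$) is eventually $0$ almost surely and does \emph{not} converge to $|\det(J\psi_f(0))|_p$. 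The dominated convergence bound you propose bounds the Jacobian integral, not the count, and the count is precisely the object for which you need a uniform bound; so the exchange of limit and expectation is not justified by your argument.

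The missing idea, and the one the paper uses, is translation invariance of the uniform distribution on $\Zp^D$: for every $a\in\Zp^D$ one has $\mathbb{E}\#\{\psi_f|_{B(0,p^{-m})}=a\}=\mathbb{E}\#\{\psi_f|_{B(0,p^{-m})}=0\}$, because replacing $f$ by $f-f_a$ (where $f_a$ is the polynomial with $\psi_{f_a}\equiv a$) preserves the law of $f$. Integrating this constant over $a\in\Zp^D$, swapping $\int_a$ and $\mathbb{E}$ by Tonelli, and then applying Proposition~\ref{Kac} with $Y=\Zp^D$ yields the exact identity
\[
\mathbb{E}\#\{\psi_f|_{B(0,p^{-m})}=0\}=\mathbb{E}\int_{B(0,p^{-m})}|\det(J\psi_f(x))|_p\,dx,
\]
valid for every $m$, with no realization-dependent parameter $M$. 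From here your limiting step and uniform bound $|\det(J\psi_f(x))|_p\le 1$ are exactly right and finish the proof.
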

\begin{proof}
Let $D=(k+1)(n-k).$ For every $a=(a_1,\ldots, a_\nu) \in \bigoplus_{j=1}^\nu\Zp[x_0,\ldots,x_k]_{(d_j)}\simeq \Zp^D$ denote by $f_a=(f_{a_1}, \ldots, f_{a_\nu})$ the list of polynomials in $n+1$ variables and of degrees $(d_1, \ldots, d_\nu)$ obtained as follows. For every $i=1, \ldots, \nu$ consider the polynomial $g_{a_i}\in \Qp[y_0, \ldots, y_k]_{(d_j)}$ whose coefficents are the entries of the vector $a_i$, and then set:
\be f_a(x_0, \ldots, x_n):=g_a(x_0, \ldots, x_k).\ee
(So $f_{a_i}$ is a polynomial on $\Qp^{n+1}$ which depends only on the first $k+1$ variables).

If $\xi\in \Zp^D$ denotes the list of coefficients of the random vector $f$, the list of the coefficients of $f-f_a$ is $\xi-a$. Observe that, given a random uniform vector $\xi\in \Zp^D$ and $a\in \Zp^{D}$ the random vector $\xi-a$ is also uniformly distributed in the unit ball; therefore $f$ and $f-f_a$ have the same distribution.
It follows that for every $m\in \mathbb{N}$
\be \mathbb{E}\#\{\sigma_f=0\}\cap U_m=\mathbb{E}\#\{\sigma_{f-f_a}=0\}\cap U_m.\ee
On the other hand, by \eqref{eq:trivi}, $\psi_{f_a}\equiv a$, and $A\in B(0, p^{-m})=\phi^{-1}(U_m)$ is a zero of $\psi_{f_a}$ if and only if $\psi_f(A)=a.$ Therefore:
\be \mathbb{E}\#\{\sigma_f=0\}\cap U_m=\mathbb{E}\#\{\psi_f=a\}\cap B(0, p^{-m})=\mathbb{E}\#\{\psi_f|_{B(0, p^{-m})}=a\}.\ee
We use now Proposition \ref{Kac} and write:
    \begin{align}\label{eq:fs}
   \mathbb{E}\#\{\psi_f|_{B(0, p^{-m})}=a\}&=\mathbb{E}\#\{\psi_f|_{B(0, p^{-m})}=0\}\\
   &=\int_{\Zp^D}\mathbb{E}\#\{\psi_f|_{B(0, p^{-m})}=a\}da\\
   &=\mathbb{E}\int_{\Zp^D}\#\{\psi_f|_{B(0, p^{-m})}=a\}da\\
  \label{eq:fs} &=\mathbb{E}\int_{B(0, p^{-m})}|\det (J\psi_f(x))|_pdx.
\end{align}
Notice that as $x\to 0$ we have
\be\label{eq:esti} |\det(J\psi_f(x))|=|\det (J\psi_f(0))|+o(\|x\|),\ee 
where the implied constant is uniformly bounded because the coefficients of the random vector $f$ range in the unit ball.
Therefore, using \eqref{eq:fs} and Lemma \ref{lemma:ecco}, we can write for every $m\in \mathbb{N}$:
\begin{align}
\mathbb{E}\#\{\sigma_f=0\}&=N_m \mathbb{E}\#\{\sigma_f|_{B(0, p^{-m})}=0\}\\
&=N_m \mathbb{E}\int_{B(0, p^{-m})}|\det J\psi_f(x)|_p dx\\&= \mu(\Qp\G_{k,n})\frac{1}{p^{mn^2}}\mathbb{E}\int_{B(0, p^{-m})}|\det J\psi_f(x)|_p dx\quad \eqref{eq:Um}\\
&=\mu(\Qp\G_{k,n})\frac{1}{p^{mn^2}}\mathbb{E}\int_{B(0, p^{-m})}\left(|\det (J\psi_f(0))|+o(\|x\|)\right)dx\quad \eqref{eq:esti}\\
&=\lim_{m\to \infty}\left(\mu(\Qp\G_{k,n})\frac{1}{p^{mn^2}}\mathbb{E}\int_{B(0, p^{-m})}\left(|\det (J\psi_f(0))|+o(\|x\|)\right)dx\right)\\
&=\mu(\Qp\G_{k,n})\mathbb{E}|\det (J\psi_f(0))|
\end{align}
This concludes the proof.
\end{proof}
\subsection{The structure of the matrix $J\psi_f(0)$}\label{Structure}
The random matrix $J(\psi_f(0))$ is filled with random variables uniformly distributed in $\Zp$ and it has a special shape that we are going to compute.

Let us start with the case of points on the intersection of $n$ hypersurfaces in $\QP^n$ i.e. $\nu=n$. Using \eqref{eq:trivi}, for all $ 0\le j\le n$ we have: 
\begin{align}
\psi_{f_j}(A)&=f_j\left(y_0\begin{pmatrix}
1\\
t_1\\
\vdots\\
t_n
\end{pmatrix}\right)=\sum_{|\alpha|=d_j}\xi_\alpha^{(j)} y_0^{\alpha_0}t_1^{\alpha_1}\cdots t_n^{\alpha_n}.
\end{align}
Therefore $J\psi_f(0)$ has the following shape: 
$$J\psi_f(0)\begin{pmatrix}
\xi_{d_1-1,1,0,\ldots,0}^{(1)} & \dots & \xi_{d_1-1,0,\ldots,0,1}^{(1)}\\
\vdots & & \vdots\\
\xi_{d_n-1,1,0,\ldots,0}^{(n)}& \ldots & \xi_{d_n-1,0,\ldots,0,1}^{(n)}
\end{pmatrix}.$$

Let us now consider the case $n=3$ and $\nu=1$. In this case, for a matrix $A=\left(\begin{smallmatrix}t_1&t_2\\t_3&t_4\end{smallmatrix}\right)$, using \eqref{eq:trivi}, we can write:
\begin{align*}
 \psi_f(A)&=f\left(y_0\begin{pmatrix}
1\\
0\\
t_1\\
t_3
\end{pmatrix}+y_1 \begin{pmatrix}
0\\
1\\
t_2\\
t_4
\end{pmatrix}\right)\\
&=\sum_{|\alpha|=3}\xi_{\alpha}y_0^{\alpha_0}y_1^{\alpha_1}(y_0t_1+y_1t_2)^{\alpha_2}(y_0t_3+y_1t_4)^{\alpha_3}
\end{align*}
Threfore $J\psi_f(0)$ has the following shape: 
 \[J\psi_f(0)=\begin{pmatrix}
\xi_{2010} &0 &\xi_{2001}  &0 \\ 
\xi_{1110} &\xi_{2010}  &\xi_{1101}  &\xi_{2001} \\ 
\xi_{0210} &\xi_{1110} &\xi_{0201}  &\xi_{1101} \\ 
0 &\xi_{0210}  &0  &\xi_{0201}
\end{pmatrix}.
\]
A simple computation generalizing this shows that for the case of lines on an hypersurface of degree $2n-3$ we have (distinct entries are uniform random variables in the unit ball): $$J\psi_f(0)=\begin{pmatrix}
    \xi_1^{(1)}& 0& \dots & \xi_1^{(n-1)}& 0\\
    \xi_2^{(1)}& \xi_1^{(1)}& \dots & \xi_2^{(n-1)}& \xi_1^{(n-1)}\\
    \vdots & \xi_2^{(1)} & &\vdots & \xi_2^{(n-1)}\\
    \xi_{2n-3}^{(1)}& \vdots && \xi_{2n-3}^{(n-1)}& \vdots\\
    0& \xi_{2n-3}^{(1)} & \dots & 0& \xi_{2n-3}^{(n-1)}
    \end{pmatrix}.$$
For the general case of $k$-flats on $Z(f_1,\ldots,f_\nu)$:    
\[J\psi_f(0)=\begin{pmatrix}
M_1\\
\vdots\\
M_j\\
\vdots\\
M_\nu
\end{pmatrix},\]
where 
$M_j$ is the Jacobian of the map given by the restriction of the polynomial $f_j$ to the space $\{x_{k+1}=\ldots=x_n=0\}$. Using again \eqref{eq:trivi} we get: \be f_j\left(\begin{pmatrix}
    \mathbf{1}_{k+1} \\
    A
    \end{pmatrix}y\right)=\sum_\alpha \xi_\alpha y_0^{\alpha_0}\cdots y_k^{\alpha_k}(y_0t_0^{(1)}+\ldots+ y_kt_k^{(1)})^{\alpha_{k+1}}\cdots (y_0t_0^{(n-k)}+\ldots+ y_kt_k^{(n-k)})^{\alpha_n}\ee
Thus the row corresponding to $y_0^{u_0}\cdots y_k^{u_k}$ ($u_0+\ldots +u_k=d_j$) in the matrix $M_j$ is given as follows: 
\begin{align}(\beta_{u_0-1,u_1,\cdots,u_k,1,0,\cdots,0}^{(j)},\cdots,\beta_{u_0,u_1,\cdots,u_k-1,1,0,\ldots,0}^{(j)}  ),&\ldots\\
\ldots&(\beta_{u_0-1,u_1,\cdots,u_k,0,\cdots,0,1}^{(j)},\cdots,\beta_{u_0,u_1,\cdots,u_k-1,0,\cdots,0,1}^{(j)}) ),\end{align}
where $$\beta_{u_0,\ldots,u_i-1,\ldots,u_k,0\ldots,1,\ldots,0}^{(j)}= \begin{cases}
0 \quad \textrm{if } u_i=0\\
\xi_{u_0,\ldots,u_i-1,\ldots,u_k,0\ldots,1,\ldots,0} ^{(j)}\quad \textrm{if } u_i\ge 1
\end{cases}$$

\begin{remark}
Every entry that is not $0$ in the matrix $J\psi_f(0)$ is repeated exactly $k+1$ times and appears in different rows.
\end{remark}
\subsection{Proof of Theorem \ref{thm:complete}}
From the previous remark, we can write $\det(J\psi_f(0))$ as follows: $$\det(J\psi_f(0))=(\xi_{1,1}^{(1)})^{k+1}\cdot P_0 +  (\xi_{1,1}^{(1)})^k\cdot P_1 +\cdots + P_{k+1},$$  
where $P_0,\ldots P_{k+1}$  are homogeneous polynomials with less variables than $\det(J\psi_f(0))$. Then 
\begin{align*}
        \mathbb{E}\left\{|\det(J\psi_f(0))|_p\right\}&\ge \mathbb{P}(|\det(J\psi_f(0))|_p=1)\\&\ge \mathbb{P}(|\det(J\psi_f(0))|_p=1\,\big|\, |P_0|_p=1)\cdot \mathbb{P}(|P_0|_p=1)\\
    \end{align*}
Under the assumption that $|P_0|_p=1$, the equation $\det(J\psi_f(0))=0$ has at most $k+1$ solutions in $\mathbb{Z}/p\mathbb{Z}$. Thus, $$\mathbb{P}(|\det(J\psi_f(0)|_p=1\,\big|\, |P_0|_p=1)\ge (1-\tfrac{k+1}{p}).$$
We repeat now the same procedure again with $\mathbb{P}(|P_0|=1)$ since every variable in the expression of $P_0$ appears at most with degree $k+1$; this process gets repeated at most $n-k$ times, since the matrix $J\psi_f(0)$ is of size $(k+1)\times(n-k)$. Therefore,
\begin{equation}\label{eq:1}
    \mathbb{E}\{|\det(J\psi_f(0))|_p\}\ge (1-\tfrac{k+1}{p})^{n-k}.
\end{equation}
On the other hand we have: \be\label{eq:2}\mathbb{E}\{|\det(J\psi_f(0))|_p\}\leq1,\ee 
because $\det(J\psi_f(0))$ takes its values in $\Zp$. Using \eqref{eq:volGL} for $\la(\textrm{GL}_m(\Zp))$, we  see that
\be\label{eq:3}\lim_{p\rightarrow +\infty}\frac{\la(\textrm{GL}_{n+1}(\Zp))}{\la(\textrm{GL}_{k+1}(\Zp))\cdot \la(\textrm{GL}_{n-k}(\Zp))}=1.\ee
Hence, using Theorem \ref{thm:reduction}, \eqref{eq:1}, \eqref{eq:2} and \eqref{eq:3}, we conclude that
\be \lim_{p\to \infty}\mathbb{E}\#\{\textrm{$k$-flats on $Z(f_1, \ldots, f_\nu)\subset \QP^n$}\}=1.\ee
\subsection{Proof of Theorem \ref{thm:complete0}}
Notice from Section \ref{Structure} that the matrix $J\psi_f(0)$ is a random matrix with all entries which are random  independent variables uniformly distributed in $\Zp$. This is independent of the choice of the degrees $d_1,\ldots,d_n$. Therefore, the expected number of points on $Z(f_1,\ldots,f_n)$ when the degrees $d_1,\ldots,d_n$ are distinct is the same when $d_1=\ldots=d_n=1$. Thus this is exactly the number of points in the intersection of $n$ generic hyperplanes lying in $\QP^n$. Therefore, 
$$\mathbb{E}\#Z(f_1,\ldots,f_n)=1.$$
\begin{corollary}\label{coro:matrix}
Let $M_n$ a random matrix in $\Zp^{n\times n}$ whose entries are random independent variables uniformly distributed in $\Zp$. Then:
\be
\label{eq:detM}\mathbb{E}\{|\det(M_n)|_p\}=\frac{(p-1)p^n}{p^{n+1}-1}.\ee
\end{corollary}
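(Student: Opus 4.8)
The plan is to read off the result from Theorem \ref{thm:complete0} via the Kac--Rice reduction of Theorem \ref{thm:reduction}, specialized to $k=0$, $\nu=n$. In that case $\Qp\G_{0,n}=\QP^n$, and, as computed in Section \ref{Structure}, the matrix $J\psi_f(0)$ is exactly a matrix $M_n\in\Zp^{n\times n}$ whose entries are independent uniform elements of $\Zp$ — the matrix in the statement. Hence Theorem \ref{thm:reduction} gives
\[
1=\mathbb{E}\#Z(f_1,\ldots,f_n)=\mu(\QP^n)\cdot\mathbb{E}\{|\det(M_n)|_p\},
\]
where the first equality is Theorem \ref{thm:complete0}. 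Therefore $\mathbb{E}\{|\det(M_n)|_p\}=\mu(\QP^n)^{-1}$, and it only remains to compute the volume of $p$--adic projective space.

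Next I would use the formula for the measure of the Grassmannian recorded in Section \ref{sec:mms},
\[
\mu(\QP^n)=\frac{\la(\mathrm{GL}_{n+1}(\Zp))}{\la(\mathrm{GL}_1(\Zp))\,\la(\mathrm{GL}_n(\Zp))},
\]
together with \eqref{eq:volGL}, which gives $\la(\mathrm{GL}_m(\Zp))=\prod_{i=1}^m(1-p^{-i})$. Substituting and cancelling the common factor $\prod_{i=1}^n(1-p^{-i})$ between numerator and denominator leaves $\mu(\QP^n)=\tfrac{1-p^{-(n+1)}}{1-p^{-1}}$. Inverting this and multiplying numerator and denominator by $p^{n+1}$ yields $\mathbb{E}\{|\det(M_n)|_p\}=\tfrac{(p-1)p^n}{p^{n+1}-1}$, as claimed.

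There is no genuine obstacle here: once Theorem \ref{thm:complete0} and Theorem \ref{thm:reduction} are available, the statement follows formally, and the only computation is the telescoping simplification of a ratio of $\mathrm{GL}$--volumes. As an independent sanity check one could instead evaluate $\mathbb{E}\{|\det(M_n)|_p\}=\sum_{m\ge 0}p^{-m}\,\la(\{|\det(M_n)|_p=p^{-m}\})$ directly using \eqref{eq:12} and sum the resulting geometric-type series in $p^{-m}$; this reproduces the same value but is not needed for the proof.
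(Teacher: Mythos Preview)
Your proof is correct and follows exactly the same approach as the paper: combine Theorem \ref{thm:complete0} with Theorem \ref{thm:reduction} (using the identification of $J\psi_f(0)$ with $M_n$ from Section \ref{Structure}) to get $1=\mu(\QP^n)\cdot\mathbb{E}\{|\det(M_n)|_p\}$, and then compute $\mu(\QP^n)=\frac{p^{n+1}-1}{p^n(p-1)}$. The only difference is that you spell out the volume computation in slightly more detail than the paper does.
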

\begin{proof}
This is a direct consequence of Theorem \ref{thm:complete0} and Theorem \ref{thm:reduction}:
\be 1=\mathbb{E}\# Z(f_1, \ldots, f_n)=\mu(\QP^n)\cdot \mathbb{E}\{|\det M_n|_p\}=\frac{p^{n+1}-1}{p^n(p-1)}\cdot \mathbb{E}\{|\det M_n|_p\},\ee
from which \eqref{eq:detM} follows.
\end{proof}
\subsection{Proof of Theorem \ref{thm:cubic}}
Applying Theorem \ref{thm:reduction} in the case $k=1$, $\nu=1$ and $d=3$ we get: \[\mathbb{E}\#\left\{\sigma_f=0\right\}=\frac{\la(\textrm{GL}_{4}(\mathbb{Z}_p))}{\la(\textrm{GL}_2(\Zp))^2}\cdot \mathbb{E}\left\{|\det(J\psi_f(0))|_p\right\}.
\]
By \eqref{eq:volGL} we have:
\be\label{eq:sv}\frac{\la(\textrm{GL}_{4}(\mathbb{Z}_p))}{\la(\textrm{GL}_2(\Zp))^2} =\frac{(p^3-1)(p^2+1)}{p^4(p-1)},\ee
it remains to compute $\mathbb{E}\{|\det(  J\psi_f(0)|_p\}$ where
$$J\psi_f(0)=\begin{pmatrix}
\xi_1 & 0 & \xi_4 & 0\\
\xi_2 & \xi_1 & \xi_5 & \xi_4\\
\xi_3& \xi_2& \xi_6 & \xi_5\\
0& \xi_3 & 0& \xi_6
\end{pmatrix}$$
and $\xi_1,\ldots ,\xi_6$ are random variables i.i.d uniformly distributed in $\Zp$. 

Because of Lemma \ref{lemma:integral}, we can compute the expectation of 
$\det(J\psi_f(0))= (\xi_1\xi_6-\xi_3\xi_4)^2-(\xi_1\xi_5-\xi_2\xi_4)(\xi_2\xi_6-\xi_3\xi_5)$ as:
\be\label{eq:sum1} \mathbb{E}\{|\det(  J\psi_f(0))|_p\}=\lim_{n\rightarrow \infty} \frac{1}{p^{6n}}\sum_{0\le \xi_1,\ldots ,\xi_6 \le p^n-1} |(\xi_1\xi_6-\xi_3\xi_4)^2-(\xi_1\xi_5-\xi_2\xi_4)(\xi_2\xi_6-\xi_3\xi_5)|_p.\ee

We are going now to introduce an alternative way of performing the summation in \eqref{eq:sum1}. Observe first that to every element $(\xi_1, \ldots, \xi_6)\in \Zp^6$ we can associate a matrix $M$ in $\Zp^{3\times 2}$:
\be M=\begin{pmatrix}
 \xi_1& \xi_4\\
 \xi_2 & \xi_5\\
 \xi_3& \xi_6
 \end{pmatrix},\ee
 as well as its reduction $\pi_n(M)\in (\mathbb{Z}/p^n\mathbb{P})^{3\times 2}$ modulo $p^n$. The determinant of $J\psi_f(0)$ is a function of the minors of $M$. Let $\mathcal{M}\subset (\mathbb{Z}/p^n\mathbb{Z})^{3\times 2}$ be the subset of matrices of maximal rank and let us consider the following map\footnote{Recall that $\mathbb{P}^2(\Zn)$ is defined by
 \be \mathbb{P}^2(\Zn):=\left((\Zn)^3\setminus (0,0,0)\right)/\sim,\ee 
 where $(x,y,z)\sim (x',y',z')$ if and only if there exists $\la$ invertible in $\Zn$ such that $(x,y,z)=\la(x',y',z')$. }: 
\begin{align*}
    h: & \,\,\,\,\,\mathcal{M}\subset (\mathbb{Z}/p^n\mathbb{Z})^{3\times 2}  \to  \mathbb{P}^2(\mathbb{Z}/p^n\mathbb{Z}) \\
    &\,\,\,\,\, (\xi_1, \ldots, \xi_6)\mapsto \left[\xi_1\xi_5-\xi_2\xi_4: \xi_1\xi_6-\xi_3\xi_4: \xi_2\xi_6-\xi_3\xi_5\right].
    \end{align*}
Observe that $h$ is surjective. Denote  by $Q$ the set of elements $\left[k_1:k_2:k_3\right]\in \mathbb{P}^2(\Zn)$ such that $k_2^2-k_1k_3=0$, and by $S:=h^{-1}(Q)$ the set of matrices in $\mathcal{M}$ for which the corresponding matrices in $\Zp^{4\times 4}$ have valuation of determinant greater or equal to $n$. Using this notation we see that the sum 
\be \sum_{0\le \xi_1,\ldots ,\xi_6 \le p^n-1} |(\xi_1\xi_6-\xi_3\xi_4)^2-(\xi_1\xi_5-\xi_2\xi_4)(\xi_2\xi_6-\xi_3\xi_5)|_p\ee
is bounded from below by
\be
 L_n:=\sum_{[k_1:k_2:k_3]\not\in Q} \#h^{-1}([k_1:k_2:k_3])\cdot|k_2^2-k_1k_3|_p,\ee
 and bounded from above by
 \be\label{eq:5}R_n:=\sum_{[k_1;k_2;k_3]\not\in Q}\#h^{-1}([k_1:k_2:k_3])\cdot |k_2^2-k_1k_3|_p+
  \frac{\#S}{p^n}.\ee
In particular, from \eqref{eq:sum1}, it will follow that:
\be\label{eq:sum2} \lim_{n\to \infty}\frac{L_n}{p^{6n}}\leq \mathbb{E}\{|\det(  J\psi_f(0))|_p\} \leq\lim_{n\to\infty}  \frac{R_n}{p^{6n}}.\ee
 In Lemma \ref{lemma:S} below we will prove that
\begin{equation}\label{eq:10}\lim_{n\rightarrow +\infty} \frac{\#S}{p^{7n}}=0,\end{equation}
from which we deduce that:
\be\label{eq:a} \mathbb{E}\{|\det(  J\psi_f(0))|_p\} =\lim_{n\to \infty}\frac{1}{p^{6n}}\sum_{[k_1:k_2:k_3]\not\in Q} \#h^{-1}([k_1:k_2:k_3])\cdot|k_2^2-k_1k_3|_p.\ee

Let us  compute the cardinality of the fibers of $h$. To this end, let $[k_1:k_2:k_3]\in \mathbb{P}^2(\mathbb{Z}/p^n\mathbb{Z})$. Without loss of generality we can assume that:
\be [k_1:k_2:k_3]= [p^{m_1}: p^{m_1}\lambda_2: p^{m_3}\lambda_3]\ee
with $m_1\leq m_2, m_3.$ In Lemma \ref{lemma:card} we will show that:
\be \label{eq:13} \#h^{-1}\left(\left[p^{m_1}:\la_2p^{m_2}:\la_3p^{m_3}\right]\right)=p^{4n-3-m_1}(p-1)(p+1)(p^{m_1+1}-1).\ee

Let now  $0\le m\le n-1$ be such that  $p^m$ is the largest power of $p$ that divides all $k_i$ in $\left[k_1:k_2:k_3\right]$; such elements $\left[k_1:k_2:k_3\right]$ with this property are in bijection with the unit\footnote{$[k_1:k_2:k_3]\in \mathbb{P}^2(\Zn)$ is called unit if at least one of the coordinates $k_i$ is invertible in $\Zn$} elements in $\mathbb{P}^2(\mathbb{Z}/p^{n-m}\mathbb{Z})$, via the map that sends $\left[k_1:k_2:k_3\right] $ to $\left[k_1/p^m: k_2/p^m: k_3/p^m\right]$. Then by \eqref{eq:13} we have 
\be\label{eq:ba}
L_n\le\sum_{m=0}^{n-1}\sum_{ \substack{
    [k_1;k_2;k_3] \textrm{ unit } \\\textrm{ in  }\mathbb{P}^2(\mathbb{Z}/p^{n-m}\mathbb{Z})}}
    \alpha_n\cdot (p^{m+1}-1)\cdot \frac{|k_2^2-k_1k_3|_p}{p^{3m}}\le R_n
\ee
where $\alpha_n:=p^{4n-3}(p-1)(p+1)$.

We will need a further step of reduction. For  $1\le k$ let us denote by $A(k)$ the number of unit elements $(k_1,k_2,k_3)\in \left(\mathbb{Z}/p^k\mathbb{Z}\right)^3$  such that $$k_2^2-k_1k_3=0\, (mod\ \ p^k),$$ and set $A(0)=1-1/p^3$. We claim that:
\begin{align} \lim_{n\to \infty}\frac{L_n}{p^{6n}}&=\lim_{n\to \infty}\frac{R_n}{p^{6n}}\\
\label{eq:limf}&=\lim_{n\to \infty}\frac{1}{p^{6n}}\alpha_n \cdot \sum_{m=0}^{n-1}\frac{p^{m+1}-1}{p^{3m}} \sum_{k=0}^{n-m-1} \frac{p^{3(n-m-k)}A(k)-p^{3(n-m-k-1)}A(k+1)}{p^{n-m-1}(p-1)p^k}.\end{align} 
In fact $p^{3(n-m-k)}A(k)$ represents the number of unit elements $(k_1,k_2,k_3)\in (\mathbb{Z}/p^{n-m}\mathbb{Z})^3$ such that $|k_2^2-k_1k_3|_p\le p^{-k}$. Therefore,
\[\frac{p^{3(n-m-k)}A(k)-p^{3(n-m-k-1)}A(k+1)}{p^{n-m-1}(p-1)}\]
represents the number of unit elements $[k_1:k_2:k_3]\in \mathbb{P}^2(\mathbb{Z}/p^{n-m}\mathbb{Z})$ such that $|k_2^2-k_1k_3|_p=p^{-k}$. Therefore \eqref{eq:limf} follows from \eqref{eq:ba}.

Using the formula for $A(k)$, given bellow in Lemma \ref{lemma:Ak}, we have:
\begin{align*}
  \sum_{k=1}^{n-m-1}  \frac{A(k)p^{3(n-m-k)}-A(k+1)p^{3(n-m-k-1)}}{p^{k}}&=& \sum_{k=1}^{n-m-1}p^{3(n-m-1)}(p^{-2k+3}-p^{-2k+1}\cdots\\
 && \cdots -p^{-2k+2}+p^{-2k})\\
  &=&p^{3(n-m-1)}(p-p^{-2(n-m-1)+1}-1\cdots\\
  &&\cdots+p^{-2(n-m-1)}),
\end{align*}
and, for $k=0$:
\[\frac{A(0)p^{3(n-m)}-A(1)p^{3(n-m-1)}}{p^0}=p^{3(n-m-1)}(p^3-p^2)
\]
In particular we have
\small
\begin{align*}
  &\alpha_n\sum_{m=0}^{n-1} \frac{p^{m+1}-1}{p^{2m}}\sum_{k=0}^{n-m-1} \frac{p^{3(n-m-k)}A(k)-p^{3(n-m-k-1)}A(k+1)}{p^{n+k-1}(p-1)}\\ =&\alpha_n\sum_{m=0}^{n-1}(p^{m+1}-1)p^{2n-5m-2}\left(\frac{p^3-p^2+p-1+p^{-2(n-m-1)}-p^{-2(n-m-1)+1}}{p-1}\right) 
  \\
  =&\alpha_n\sum_{m=0}^{n-1}(p^{m+1}-1)\left(p^{2n-2}(p^2+1)p^{-5m}-p^{-3m}\right)
  \\
  =&\alpha_np^{2n-2}(p^2+1)\sum_{m=0}^{n-1}\left(p^{-4m+1}-p^{-5m}\right)-\alpha_n\sum_{m=0}^{n-1}\left(p^{-2m+1}-p^{-3m}\right)\\
  =&\alpha_np^{2n-2}(p^2+1)\left(\frac{(p^{4n}-1)p^5}{p^{4n}(p^4-1)}-\frac{(p^{5n}-1)p^5}{p^{5n}(p^5-1)}\right)-\alpha_n\left(\frac{p^{2n}-1}{p^{2n-3}(p^2-1)}-\frac{p^{3n}-1}{p^{3n-3}(p^3-1)}\right).
\end{align*}
\normalsize
It remains now to put the pieces together. Combining \eqref{eq:a} with \eqref{eq:limf}, and substituting the value for $\alpha_n$, we get:
\begin{align*}
\mathbb{E}\left\{|\det(J\psi_f(0))|_p\right\}&=\lim_{n\rightarrow \infty} \frac{1}{p^{6n}}p^{4n-3}(p-1)(p+1)p^{2n-2}(p^2+1)\left(\frac{p^5}{p^4-1}-\frac{p^5}{p^5-1}\right)\\
&=\frac{p^4}{p^4+p^3+p^2+p+1}.
\end{align*}
Together with \eqref{eq:sv} this finally gives:
 \begin{equation*}
     \mathbb{E}\#\left\{\sigma_f=0\right\}=\frac{(p^3-1)(p^2+1)}{p^5-1}.
 \end{equation*}
 This concludes the proof of Theorem \ref{thm:cubic}. It remains to prove the lemmas that we have used in the proof.

\begin{lemma}\label{lemma:card}Using the above notations:
\be \#h^{-1}\left(\left[p^{m_1}:\la_2p^{m_2}:\la_3p^{m_3}\right]\right)=p^{4n-3-m_1}(p-1)(p+1)(p^{m_1+1}-1).\ee
\end{lemma}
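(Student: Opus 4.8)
The plan is to compute the fiber cardinality directly by counting matrices $M \in (\mathbb{Z}/p^n\mathbb{Z})^{3\times 2}$ of maximal rank whose three $2\times 2$ minors are a prescribed scalar multiple of a fixed representative $[p^{m_1}:\lambda_2 p^{m_2}:\lambda_3 p^{m_3}]$ of the target point in $\mathbb{P}^2(\mathbb{Z}/p^n\mathbb{Z})$. First I would reduce to the normalized situation: since $h$ only sees the minor vector up to an invertible scalar, and since the map $M \mapsto \mathrm{im}(M)$ together with the choice of a linear form on the $2$-dimensional image determines the minors up to scale, I would exploit the $\mathrm{GL}_2(\mathbb{Z}/p^n\mathbb{Z})$-action by right multiplication on $M$: it fixes the image of $M$ (hence the point in the Grassmannian, i.e.\ the projective minor vector) and multiplies the minor vector by $\det$ of the $\mathrm{GL}_2$-element. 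Thus I would first count ordered pairs $(L, \phi)$ where $L \subset (\mathbb{Z}/p^n\mathbb{Z})^3$ is a free rank-$2$ direct summand (a point of the Grassmannian) with prescribed Pl\"ucker point, and where $\phi$ is an unordered-to-ordered bookkeeping datum, then multiply by $\#\mathrm{GL}_2(\mathbb{Z}/p^n\mathbb{Z}) = p^{4n}(1-p^{-1})(1-p^{-2})$ divided by the appropriate stabilizer, or — more concretely — directly parametrize.

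The concrete route I would actually carry out: WLOG, using the surjectivity of $h$ and the $\mathrm{GL}_3(\mathbb{Z}/p^n\mathbb{Z})$-action by left multiplication (which acts on the minor vector linearly and transitively on projective points with a given ``content'' $p^{m_1}$, as one checks from Proposition~\ref{SVD} applied to $M$), I reduce to counting the fiber over the single point $[p^{m_1}:0:0]$, i.e.\ matrices $M$ of maximal rank with first minor $\xi_1\xi_5-\xi_2\xi_4$ a unit times $p^{m_1}$ and the other two minors in the ideal $(p^{m_1})$ — wait, more precisely with minors $(u p^{m_1}, 0, 0)$ for some unit $u$, up to the overall scaling already quotiented. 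Here the reduction step must be done carefully: left multiplication by $\mathrm{GL}_3$ is transitive on projective minor-vectors of fixed $p$-content because the minor vector of $M$ is, up to a unit, the ``cross product'' of the two columns, and $\mathrm{GL}_3(\mathbb{Z}/p^n\mathbb{Z})$ acts on $\wedge^2((\mathbb{Z}/p^n\mathbb{Z})^3) \cong (\mathbb{Z}/p^n\mathbb{Z})^3$ via $\wedge^2 g$, which is again in $\mathrm{GL}_3(\mathbb{Z}/p^n\mathbb{Z})$ (determinant a unit), and $\mathrm{GL}_3(\mathbb{Z}/p^n\mathbb{Z})$ is transitive on vectors of each fixed content $p^{m_1}$ with $m_1 \le n-1$ — so the fiber cardinality depends only on $m_1$, not on $\lambda_2, \lambda_3, m_2, m_3$, which already explains the shape of \eqref{eq:13}. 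Then over $[p^{m_1}:0:0]$ I count: the columns $c_1 = (\xi_1,\xi_2,\xi_3)$, $c_2 = (\xi_4,\xi_5,\xi_6)$ must satisfy $c_1 \wedge c_2 = u\,(p^{m_1}, 0, 0)$ for some unit $u$, i.e.\ $\xi_1\xi_5 - \xi_2\xi_4 \in p^{m_1}(\mathbb{Z}/p^n\mathbb{Z})^\times$ modulo the $(p^{m_1})$-vanishing of the other two, and with maximal rank. This is a bounded elementary enumeration over residues mod $p^n$.

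The main obstacle I anticipate is the bookkeeping in the transitivity/content argument — making precise that $\mathrm{GL}_3(\mathbb{Z}/p^n\mathbb{Z})$ acts on $(\mathbb{Z}/p^n\mathbb{Z})^3 \setminus \{0\}$ with orbits exactly the sets $\{v : \text{$p$-content of } v = p^j\}$ for $0 \le j \le n-1$ (this is the $p$-adic/local elementary-divisor statement, a finite-ring analogue of Proposition~\ref{SVD} for vectors), together with the fact that a maximal-rank $M$ has minor vector of content equal to $p^{m_1}$ where $m_1$ is the first elementary divisor exponent of $M$. I would also need to handle the unit scalar $u$ carefully so as not to over- or under-count by a factor of $\#(\mathbb{Z}/p^n\mathbb{Z})^\times = p^{n-1}(p-1)$; tracking this factor is exactly what produces the $(p-1)$, the $p^{4n-3}$, and — after summing the number of valid column pairs over the maximal-rank condition — the factor $(p+1)(p^{m_1+1}-1)$. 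Once the reduction is in place, the residual count over $[p^{m_1}:0:0]$ is a routine inclusion-exclusion on when $c_1, c_2$ generate a rank-$2$ summand with the prescribed first minor, which I would organize by the valuations of $\gcd(\xi_1,\xi_2,\xi_3)$ and of the resulting minor, and which I expect to match \eqref{eq:13} after simplification; I will not grind through those residue counts here, only set them up and invoke the standard counts $\#\mathrm{GL}_k(\mathbb{Z}/p^n\mathbb{Z}) = p^{kn^2}\Pi_k$ from \eqref{eq:volGL} together with \eqref{eq:Cardinal2} to convert between $p$-adic volumes and counts mod $p^n$ wherever convenient.
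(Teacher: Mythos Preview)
Your reduction via the left $\mathrm{GL}_3(\mathbb{Z}/p^n\mathbb{Z})$-action is sound and genuinely different from the paper's argument. You correctly observe that left multiplication by $g$ sends the minor vector of $M$ to $(\wedge^2 g)$ applied to it, that $\wedge^2 g$ again lies in $\mathrm{GL}_3(\mathbb{Z}/p^n\mathbb{Z})$, and that this group is transitive on vectors of fixed $p$-content; this cleanly shows the fiber cardinality depends only on $m_1$. The paper does not use this symmetry: instead it parametrizes an arbitrary element of the fiber as a top $2\times 2$ block $A$ (with $|\det A|_p = p^{-m_1}$, forced by the normalization $m_1 \le m_2, m_3$) together with a bottom row $(x,y)$, applies the $2\times 2$ Smith normal form (Proposition~\ref{SVD}) to $A$, and shows by a short linear-algebra computation that for each such $A$ there are exactly $p^{m_1}$ valid bottom rows. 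The count then follows immediately by multiplying $p^{m_1}$ by the known number of $2\times 2$ matrices over $\mathbb{Z}/p^n\mathbb{Z}$ with determinant of valuation $m_1$, via \eqref{eq:Cardinal2} and \eqref{eq:12}. Your route is more conceptual; theirs avoids any transitivity argument and finishes in a few lines.

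That said, your proposal has a real gap: you explicitly decline to carry out the enumeration over the normalized point $[p^{m_1}:0:0]$, calling it a ``routine inclusion-exclusion'' that you ``will not grind through.'' But that count is the entire content of the lemma. Over $[p^{m_1}:0:0]$ the conditions become $\xi_1\xi_6=\xi_3\xi_4$, $\xi_2\xi_6=\xi_3\xi_5$, and $v_p(\xi_1\xi_5-\xi_2\xi_4)=m_1$, which still require a nontrivial case split on entry valuations; nothing here is automatic. As written you have only established that the fiber size depends on $m_1$, not what it equals. To complete your approach without that case analysis, you could count globally: the number of $M\in\mathcal{M}$ whose minor vector has content exactly $p^{m_1}$ equals the fiber size times the number of points in $\mathbb{P}^2(\mathbb{Z}/p^n\mathbb{Z})$ of content $p^{m_1}$, and both the left-hand side (via the $3\times 2$ elementary-divisor decomposition) and the second factor are computable from standard orbit counts; dividing then recovers the formula. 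Alternatively, the paper's top-block-plus-bottom-row parametrization gives the answer with almost no additional work once you have fixed the normalization.
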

\begin{proof}Now we need to compute $\#h^{-1}(\left[k_1:k_2:k_3\right])$. This depends only on $m_1$ if we consider $\left[k_1:k_2:k_3\right]=\left[p^{m_1}:\la_2p^{m_2}:\la_3p^{m_3}\right]$ as before. Indeed, let $g \in h^{-1}\left(\left[p^{m_1}:\la_2p^{m_2}:\la_3p^{m_3}\right]\right)$. By Proposition \ref{SVD}, $g$ has the following form: $$g=\begin{pmatrix}
   U\begin{pmatrix}
   p^u & 0\\
   0   & p^v
    \end{pmatrix}V\\
   x \hspace{35pt} y
\end{pmatrix} \in h^{-1}\left(\left[p^{m_1}:\la_2p^{m_2}:\la_3p^{m_3}\right]\right), $$ where $u+v=m_1$ and $U,V$ are invertible matrices in $(\Zn)^{2\times2}$. This is equivalent to
$$\begin{pmatrix}
   U\begin{pmatrix}
   p^u & 0\\
   0   & p^v
    \end{pmatrix}\\
   x' \hspace{35pt} y'
\end{pmatrix} \in h^{-1}\left(\left[p^{m_1}:\lambda_2p^{m_2}:\lambda_3p^{m_3}\right]\right),$$ where $(x',y')=(x,y)\cdot V^{-1}.$ Set $U=\left(\begin{smallmatrix}
   a & b\\
   c  & d
    \end{smallmatrix}\right)$, then we have \begin{equation}
        \begin{cases}
        ay'p^u-bx'p^v=\lambda_2p^{m_2}(ad-bc)\\
        cy'p^u-dx'p^v=\lambda_3p^{m_3}(ad-bc)
        \end{cases}.
    \end{equation}
The previous system is equivalent to $$\begin{pmatrix}
a & b\\
c & d
\end{pmatrix} \begin{pmatrix}
y'p^u\\
-x'p^v
\end{pmatrix}=\begin{pmatrix}
\lambda_2p^{m_2}(ad-bc)\\
\lambda_3p^{m_3}(ad-bc)
\end{pmatrix},$$
which gives:
\be\label{eq:11}\begin{pmatrix}
y'p^u\\
-x'p^v
\end{pmatrix}=\begin{pmatrix}
a & b\\
c & d
\end{pmatrix}^{-1}\begin{pmatrix}
\lambda_2p^{m_2}(ad-bc)\\
\lambda_3p^{m_3}(ad-bc)
\end{pmatrix}.\ee
Notice that $m_2,m_3\ge u,v$. For all $z\in \Zn$ the equation $p^vx'=p^vz$ has $p^v$ solutions in $\Zn$. In fact, this is equivalent to $p^{n-v}/x'-z$. Thus
\eqref{eq:11} has exactly $p^up^v=p^{m_1}$ solutions, which means that we have $p^{m_1}$ choices for $(x,y)$. Therefore, using Lemma \ref{Cardinal} and \eqref{eq:12} we finally get:
\begin{align}
    \#h^{-1}\left(\left[p^{m_1}:\la_2p^{m_2}:\la_3p^{m_3}\right]\right)&=p^{m_1}\cdot \#\left\{M\in (\Zn)^{2\times 2}\, \big|\, |\det(M)|_p=p^{-m_1}\right\}\\
    &=p^{m_1}\cdot p^{4n}\cdot \la \left(\left\{M\in (\Zp)^{2\times 2}\, \big|\, |\det(M)|_p=p^{-m_1}\right\}\right)\\
   &=p^{4n-3-m_1}(p-1)(p+1)(p^{m_1+1}-1).
\end{align}
\end{proof}

\begin{lemma}\label{lemma:S}Using the above notations we have:
\be\lim_{n\rightarrow \infty}\frac{\#S}{p^{7n}}=0.\ee
\end{lemma}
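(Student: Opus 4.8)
The plan is to observe that $S$ sits inside a finite set of size $p^{6n}$, which already forces $\#S/p^{7n}\le p^{-n}$, with enormous room to spare.

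First I would recall the definitions: $S\subseteq\mathcal{M}$, and $\mathcal{M}$ is by construction a subset of the set of all $3\times 2$ matrices with entries in $\Zn$, namely $(\Zn)^{3\times 2}\cong(\Zn)^{6}$, which has exactly $p^{6n}$ elements. Hence
\[
0\le \#S\le \#\big((\Zn)^{3\times 2}\big)=p^{6n},
\]
and therefore
\[
0\le\frac{\#S}{p^{7n}}\le\frac{p^{6n}}{p^{7n}}=\frac{1}{p^{n}},
\]
which tends to $0$ as $n\to\infty$; this is exactly the assertion. This is all that the proof of Theorem \ref{thm:cubic} requires: it guarantees that the extra term $\#S/p^{n}$ in $R_n$ becomes negligible after dividing by $p^{6n}$, so that $\lim_n R_n/p^{6n}=\lim_n L_n/p^{6n}$, which is what is used to pass from \eqref{eq:sum2} to \eqref{eq:a}. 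Since the only ingredient is the count $\#(\Zn)^{3\times 2}=p^{6n}$, there is no genuine obstacle here; the lemma is purely a ``negligibility'' statement, asserting that the (comparatively rare) matrices whose associated $4\times 4$ determinant $k_2^2-k_1k_3$ vanishes modulo $p^{n}$ cannot influence the limit.

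For completeness one can instead give a bound that reflects the actual size of $S$, using $S=h^{-1}(Q)$ together with Lemma \ref{lemma:card}. Stratifying the points of $Q$ by the largest power $p^{m}$ dividing all of their homogeneous coordinates, a point $p^{m}[k_1':k_2':k_3']$ with $[k_1':k_2':k_3']$ a unit point lies in $Q$ precisely when $(k_2')^{2}\equiv k_1'k_3'$ modulo $p^{\max(0,\,n-2m)}$, so each stratum injects into a ``thickened conic'' in $\mathbb{P}^2(\mathbb{Z}/p^{n-m}\mathbb{Z})$ of cardinality $O(p^{n})$; since by Lemma \ref{lemma:card} every fibre of $h$ over such a point has size $p^{4n-3-m}(p-1)(p+1)(p^{m+1}-1)=O(p^{4n})$ uniformly in $m$, summing over $0\le m\le n-1$ gives $\#S=O(n\,p^{5n})$, hence $\#S/p^{7n}=O(n\,p^{-2n})\to 0$. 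Either route finishes the proof; the only (inessential) care required in the second one is the bookkeeping of representatives in $\mathbb{P}^2$ over $\Zn$ and of the $\gcd$-stratification.
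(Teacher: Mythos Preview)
Your first argument is correct and is genuinely simpler than the paper's. The paper proves the very same bound $\#S=O(p^{6n})$, but obtains it by first showing that the number of triples $(k_1,k_2,k_3)\in(\mathbb{Z}/p^n\mathbb{Z})^3$ with $k_2^2-k_1k_3=0$ is at most $2p^{2n}$, and then multiplying by the uniform upper bound on the fibre cardinality coming from Lemma~\ref{lemma:card}, namely $\#h^{-1}([k_1:k_2:k_3])\le p^{4n-3}(p-1)(p+1)\frac{p^n-1}{p^{n-1}}=O(p^{4n})$. The product is $O(p^{6n})$, hence $\#S/p^{7n}\to 0$. Your observation that $S$ is contained in $(\mathbb{Z}/p^n\mathbb{Z})^{3\times 2}$, a set of exact size $p^{6n}$, reaches the same bound in one line and is all that equations \eqref{eq:sum2}--\eqref{eq:a} require. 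What the paper's route buys is some information about the actual density of $S$ (one sees that the constraint $k_2^2-k_1k_3=0$ really cuts the count down), but since both arguments end at $O(p^{6n})$ this extra work is not cashed in anywhere. Your optional second route, via the $\gcd$-stratification, is closer in spirit to the paper's proof and even yields the sharper $\#S=O(n\,p^{5n})$; it is correct as sketched but, as you note, unnecessary.
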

\begin{proof}
We claim first that the number of triples $(k_1,k_2,k_3)\in \left(\mathbb{Z}/p^n\mathbb{Z}\right)^3$ such that $k_2^2-k_1k_3=0$ is less than $2p^{2n}$.

Indeed, suppose first that $k_1=p^uk_1'\ne 0$ ($0\le u\le n-1$) with $k_1'$ invertible. Then $k_1k_3-k_2^2=0$ if and only if $p^uk_3=k_2^2\cdot k_1'^{-1}$. Fixing $k_1'$ and $k_2$, the last equation has either $p^u$ or $0$ solutions for $k_3$, depending on whether the valuation of $k_2$ is less than $u/2$ or not. Then the number of elements $k_2$ for which the equation has solutions is  $\le p^{n-u/2}$. Therefore, the number of triples $(k_1,k_2,k_3)$ such that $k_2^2=k_1k_3$ and $k_1\ne 0$ is less than or equal to\begin{equation*}
    \sum_{u=0}^{n-1} p^{n-u-1}(p-1)p^up^{n-u/2}=p^{2n-1}(p-1)\cdot\frac{p^{n/2}-1}{p^{n/2}}\cdot \frac{p^{n/2}}{p^{n/2}-1} \leq 2p^{2n-1}(p-1).
\end{equation*}

For the case when $k_1=0$ we must have also $k_2=0$, hence in this case there exist $p^n$ triples $(k_1,k_2,k_3)$ solving $k_2^2-k_1k_3=0$. In particular the number of solution of $k_2^2-k_1k_3=0$ in $(\mathbb{Z}/p^n\mathbb{Z})^3$ is less than $2p^{2n-1}(p-1)+p^n\leq 2p^{2n}$, as claimed. 

By \eqref{eq:13}, for every element $\left[k_1;k_2;k_3\right]\in \mathbb{P}^2(\mathbb{Z}/p^n\mathbb{Z}) $ we have $$\#h^{-1}(\left[k_1;k_2;k_3\right]) \le p^{4n-3}(p-1)(p+1)\frac{p^{n}-1}{p^{n-1}}.$$ Therefore,
 $$\#S\le p^{4n-3}(p-1)(p+1)\frac{p^{n}-1}{p^{n-1}}2p^{2n}\leq O(p^{6n}),$$
 which immediately implies:
 $$\lim_{n\rightarrow \infty}\frac{\#S}{p^{7n}}=0.$$
\end{proof}
 \begin{lemma}\label{lemma:Ak}
 Using the above notations,
 \be \forall\,  k\ge 1:\,  A(k)=p^{2k}-p^{2k-2}\ee
 \end{lemma}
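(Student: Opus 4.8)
The plan is to count $A(k)$ directly by inclusion--exclusion according to which of the two ``outer'' coordinates $k_1,k_3$ is invertible, exploiting the fact that the equation $k_2^2-k_1k_3=0$ is affine-linear in each of $k_1$ and $k_3$ separately.

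First I would observe that a triple $(k_1,k_2,k_3)$ solving $k_2^2\equiv k_1k_3\pmod{p^k}$ is a \emph{unit} triple if and only if $k_1$ or $k_3$ is invertible in $\mathbb{Z}/p^k\mathbb{Z}$. Indeed, if $k_1$ or $k_3$ is a unit the triple is certainly a unit triple; conversely, if $k_1$ and $k_3$ are \emph{both} non-units then $k_1k_3\equiv 0\pmod p$, hence (since $k\ge 1$) $k_2^2\equiv 0\pmod p$, so $k_2$ is a non-unit too and the whole triple fails to be a unit. Thus $A(k)=\#(X_1\cup X_3)$, where for $j\in\{1,3\}$ we let $X_j$ be the set of solutions of $k_2^2\equiv k_1k_3\pmod{p^k}$ with $k_j$ invertible.

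Next I would count the individual pieces. For $X_1$: if $k_1$ is a unit, the equation determines $k_3=k_2^2k_1^{-1}$, so $(k_1,k_2)\mapsto(k_1,k_2,k_2^2k_1^{-1})$ is a bijection from $\{(k_1,k_2):k_1\text{ a unit}\}$ onto $X_1$; hence $\#X_1=(p^k-p^{k-1})\,p^k$, and by the symmetry $k_1\leftrightarrow k_3$ also $\#X_3=(p^k-p^{k-1})\,p^k$. For the intersection $X_1\cap X_3$, in the above parametrization of $X_1$ the coordinate $k_3=k_2^2k_1^{-1}$ is a unit precisely when $k_2$ is a unit, so $\#(X_1\cap X_3)=(p^k-p^{k-1})^2$. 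Inclusion--exclusion then gives
\[
A(k)=\#X_1+\#X_3-\#(X_1\cap X_3)=2(p^k-p^{k-1})p^k-(p^k-p^{k-1})^2=(p^k-p^{k-1})(p^k+p^{k-1})=p^{2k}-p^{2k-2}.
\]

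There is no serious obstacle here; the only points requiring care are the ``only if'' direction of the first step --- ruling out unit triples in which $k_2$ is the sole invertible coordinate --- and checking that the parametrizations of $X_1$ and of $X_1\cap X_3$ are genuine bijections, so that no solution is overcounted. As a consistency check one can verify directly over $\mathbb{Z}/p\mathbb{Z}$ that $A(1)=p^2-1$, or recover the formula from the recursion $T(k)=A(k)+p^3\,T(k-2)$, where $T(k)$ counts all solutions (unit or not) of $k_2^2\equiv k_1k_3\pmod{p^k}$ and the non-unit solutions are obtained by scaling a solution mod $p^{k-2}$ by $p$ in each coordinate.
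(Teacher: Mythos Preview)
Your proof is correct and follows essentially the same direct-counting strategy as the paper's; the only difference is organizational---the paper cases on whether $k_2$ is invertible (observing that this forces both $k_1,k_3$ to be units, while otherwise exactly one of them is), whereas you case on whether $k_1$ or $k_3$ is invertible and combine via inclusion--exclusion. These are the same partition of the solution set, and both arrive at $(p^k-p^{k-1})(p^k+p^{k-1})=p^{2k}-p^{2k-2}$.
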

 \begin{proof}
 Suppose $k_2$ is invertible, then  also $k_1$ and $k_3$ must be invertible. By fixing  specific values for $k_1$ and $k_2$, there exists only one possible value for $k_3$ for which $k_2^2-k_1k_3=0$. 
Therefore in this case we have $p^{2(k-1)}(p-1)^2$ elements $(k_1,k_2,k_3)$ satisfying the equation $k_2^2-k_1k_3=0$. Let us now suppose that $k_2$ is not invertible, then either $k_1$ or $k_3$ is invertible. Without loss of generality suppose $k_1$ is invertible, again $k_3$ can have only one unique value whenever you fix $k_2$ and $k_1$. Therefore in this case we have $2p^{2(k-1)}(p-1)$, and hence we get $A(k)=p^{2k}-p^{2k-2}$. 
 \end{proof}
\subsection{The intersection of two quadrics}
Let us consider now the problem of counting lines on a complete intersection of two quadrics ($\nu=2$ and $d_1=d_2=2$). Using Section \ref{Structure}, the matrix $J\psi_f(0)$ in this case is given as follow: 
\be\label{eq:Jacobi}J\psi_f(0)=\begin{pmatrix}
a_1 & 0& b_1 & 0 & c_1 &0\\
a_2 & a_1 & b_2 & b_1 & c_2& c_1\\
0 & a_2 & 0 & b_2 & 0 & c_2\\
a'_1 & 0& b'_1 & 0 & c'_1 &0\\
a'_2 & a'_1 & b'_2 & b'_1 & c'_2& c'_1\\
0 & a'_2 & 0 & b'_2 & 0 & c'_2
\end{pmatrix}.\ee
We can exchange some rows and columns, without changing the valuation of the determinant, and get the following matrix: 
$$\begin{pmatrix}
a_1 & b_1 & c_1 & 0& 0& 0 \\
a_2& b_2 & c_2 & a_1& b_1 & c_1\\
a'_1& b'_1& c'_1 & 0 & 0 &0\\
a'_2 & b'_2& c'_2 & a'_1 & b'_1 & c'_1\\
0& 0& 0 & a_2 & b_2 & c_2\\
0 & 0 & 0 & a'_2 & b'_2 & c'_2
\end{pmatrix}.$$
Notice that in this case $\det(J\psi_f(0))=k_1k_4-k_2k_3 $ where $k_1,\ldots, k_4$ are the minors of the matrix $$\begin{pmatrix}
a_1 & b_1 & c_1\\
a_2& b_2 & c_2\\
a'_1& b'_1& c'_1\\
a'_2 & b'_2& c'_2
\end{pmatrix}.$$ The special shape of this matrix allows us to apply the ideas of the proof of Theorem \ref{thm:cubic} and get the following result.
\begin{theorem}\label{thm:quadrics}The average number of $p$--adic lines on the intersection of two random quadrics in $\QP^4$ is $1$.
\end{theorem}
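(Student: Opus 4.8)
The plan is to follow the template set up in the proof of Theorem \ref{thm:cubic}, exploiting the reduction already made in the excerpt: after reordering rows and columns, $\det(J\psi_f(0)) = k_1 k_4 - k_2 k_3$, where $k_1, \ldots, k_4$ are the $2\times 2$ minors of a random matrix $N \in \Zp^{4\times 3}$ with i.i.d. uniform entries. By Theorem \ref{thm:reduction} and \eqref{eq:volGL} the expected number of $p$--adic lines equals $\mu(\Qp\G_{1,4}) \cdot \mathbb{E}\{|k_1k_4 - k_2 k_3|_p\}$, where $\mu(\Qp\G_{1,4}) = \frac{\la(\mathrm{GL}_5(\Zp))}{\la(\mathrm{GL}_2(\Zp))\la(\mathrm{GL}_3(\Zp))}$; so the task reduces to computing $\mathbb{E}\{|k_1k_4 - k_2 k_3|_p\}$ and checking that the product is exactly $1$.

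To compute $\mathbb{E}\{|k_1 k_4 - k_2 k_3|_p\}$, I would use Lemma \ref{lemma:integral} to write it as a limit of Riemann sums over $(\Zn)^{4\times 3}$, and then re-organize the summation through the reduction $\pi_n(N)$ and its $2\times 2$ minors, exactly as in the cubic case. The key object is the map sending a rank-$2$ matrix in $(\Zn)^{4\times 3}$ to its vector of minors $[k_1:k_2:k_3:k_4]$ (suitably normalized, landing in some $\mathbb{P}^{?}(\Zn)$ cut out by the Plücker relation); one stratifies by the power of $p$ dividing all the $k_i$ and by the valuation of $k_1 k_4 - k_2 k_3$. The fiber cardinality of this minors map should again depend only on the minimal valuation $m_1$ among the $k_i$ and be computable via the $p$--adic SVD (Proposition \ref{SVD}) together with the volume formula \eqref{eq:12}, giving an analogue of Lemma \ref{lemma:card}. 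One also needs the analogue of Lemma \ref{lemma:S}, namely that the "bad set'' $S$ where the valuation of the determinant exceeds $n$ is negligible: $\#S = o(p^{?n})$ after the relevant normalization, which follows from bounding the number of solutions of the quadratic Plücker-type relation $k_1 k_4 = k_2 k_3$ modulo $p^n$ and multiplying by the fiber bound. Finally one needs the analogue of Lemma \ref{lemma:Ak}: a closed formula for $A(k)$, the number of unit solutions of $k_1 k_4 - k_2 k_3 \equiv 0 \pmod{p^k}$ in $(\Zn)^4$ (this is a smooth quadric in $4$ variables over $\Z/p^k\Z$, so $A(k)$ will be a simple polynomial in $p^k$, computable by a Hensel/counting argument).

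Assembling these pieces, the expectation $\mathbb{E}\{|k_1 k_4 - k_2 k_3|_p\}$ becomes a nested geometric sum in $m$ (the overall valuation) and $k$ (the valuation of the relation), whose $n\to\infty$ limit can be evaluated in closed form exactly as in the cubic computation. Multiplying the resulting rational function of $p$ by $\mu(\Qp\G_{1,4})$ must collapse to $1$; the fact that the answer is the clean value $1$ (rather than a more complicated rational function as in Theorem \ref{thm:cubic}) is a useful consistency check and suggests that the various $p$-power factors cancel cleanly. The main obstacle I expect is the combinatorics of the fiber-counting lemma: unlike the $\mathbb{P}^2$ case of the cubic, the minors here satisfy one Plücker relation among four quantities, so parametrizing the fibers of the minors map and keeping track of how the count depends on the valuation profile of $(k_1,k_2,k_3,k_4)$ — not just the single minimum $m_1$ — requires care, and one has to verify that the dependence really does reduce to $m_1$ after using $\mathrm{GL}$-invariance and the SVD normalization. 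A secondary technical point is making sure the error term $\frac{\#S}{p^n}$ is inserted with the correct power of $p$ so that the squeeze \eqref{eq:sum2} goes through.
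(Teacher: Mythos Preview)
Your overall strategy is exactly the paper's: apply Theorem~\ref{thm:reduction}, reduce to $\mathbb{E}\{|k_1k_4-k_2k_3|_p\}$ via the minors of the $4\times 3$ matrix, pass to Riemann sums with Lemma~\ref{lemma:integral}, introduce the minors map $h$ into projective space over $\Zn$, count fibres via the SVD and \eqref{eq:12}, control the bad set $S$, and evaluate a double geometric sum indexed by the overall valuation $m$ and the valuation $k$ of $k_1k_4-k_2k_3$. The paper carries this out verbatim (Lemmas~\ref{lemma:Fbr}, \ref{lemma:SS}, \ref{lemma:Bk} playing the roles of Lemmas~\ref{lemma:card}, \ref{lemma:S}, \ref{lemma:Ak}).

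Two misstatements in your proposal would derail the execution, however. First, the $k_i$ are the four \emph{$3\times 3$} maximal minors of the $4\times 3$ matrix, not $2\times 2$ minors; correspondingly $\mathcal{M}$ consists of rank-$3$ (not rank-$2$) matrices. Second, and more importantly, your anticipated ``main obstacle'' is a phantom: the map $h:\mathcal{M}\to\mathbb{P}^3(\Zn)$ is \emph{surjective onto all of $\mathbb{P}^3$}, because the Grassmannian of $3$-planes in $4$-space is $\mathbb{P}^3$ itself and there is no Pl\"ucker relation among four maximal minors of a $4\times 3$ matrix. The expression $k_1k_4-k_2k_3$ is not a relation the minors satisfy; it is simply the function whose valuation you are averaging. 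So the fibre count really does depend only on $m_1=\min_i v_p(k_i)$, exactly as in the cubic case, and Lemma~\ref{lemma:Fbr} gives $\#h^{-1}([k_1{:}\cdots{:}k_4])=p^{9n-6-m_1}(p^3-1)(p^{m_1+1}-1)(p^{m_1+2}-1)$. With this correction the rest of your outline goes through unchanged.
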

\begin{proof}
Applying Theorem \ref{thm:reduction} in the case $k=1$, $\nu=2$, and $n=4$ we get:
\be\mathbb{E}\#\{\sigma_f=0\}=\frac{\la(\textrm{GL}_5(\Zp))}{\la(\textrm{GL}_3(\Zp))\cdot \la(\textrm{GL}_2(\Zp))}\mathbb{E}\{|\det(J\psi_f(0)|_p\},\ee
where $J\psi_f(0)$ is given by \eqref{eq:Jacobi}.
By \eqref{eq:volGL} we have: 
\be\label{eq:volume}\frac{\la(\textrm{GL}_5(\Zp))}{\la(\textrm{GL}_3(\Zp))\cdot \la(\textrm{GL}_2(\Zp))}=\frac{(p^4-1)(p^5-1)}{p^6(p-1)(p^2-1)}.\ee
It remains to compute $\mathbb{E}\{|\det(J\psi_f(0))|_p\}$. By Lemma \ref{lemma:integral} we have: 
\be \mathbb{E}\{|\det(J\psi_f(0))|_p\}=\lim_{n\to+\infty}\frac{1}{p^{12n}}\sum_{0\le \xi_1\ldots \xi_{12}\le p^n-1} |\det(M(\xi_1, \ldots,\xi_{12}))|_p\ee
where \be\label{eq:17} M(\xi_1,\ldots,\xi_{12})=\begin{pmatrix}
\xi_1 & \xi_2 & \xi_3 & 0& 0& 0 \\
\xi_4& \xi_5 & \xi_6 & \xi_1& \xi_2 & \xi_3\\
\xi_7& \xi_8& \xi_9 & 0 & 0 &0\\
\xi_{10} & \xi_{11}& \xi_{12} & \xi_4 & \xi_5 & \xi_6\\
0& 0& 0 & \xi_7 & \xi_8 & \xi_9\\
0 & 0 & 0 & \xi_{10} & \xi_{11} & \xi_{12}
 \end{pmatrix}.\ee
 We are going to follow the same reasoning of the proof of Theorem \ref{thm:cubic}. Let us first consider the following surjective map: 
 \begin{align*}
    h: & \,\,\,\,\,\mathcal{M}\subset (\mathbb{Z}/p^n\mathbb{Z})^{4\times 3}  \to  \mathbb{P}^3(\mathbb{Z}/p^n\mathbb{Z}) \\
    &\,\,\,\,\, (\xi_1,\ldots, \xi_{12})\mapsto \left[k_1:k_2:k_3:k_4\right],
    \end{align*}  where $\mathcal{M}$ is the subset of matrices with full rank, and $k_1,\ldots,k_4$ are the minors of the following matrix:
\be \begin{pmatrix}
 \xi_1 &\xi_2 &\xi_3\\
    \xi_4 & \xi_5 &\xi_6\\
    \xi_7 & \xi_8 & \xi_9\\
    \xi_{10} & \xi_{11} & \xi_{12}
\end{pmatrix}.\ee
 Therefore the sum 
 \be \sum_{0\le \xi_1\ldots \xi_{12}\le p^n-1}|\det(M(\xi_1, \ldots,\xi_{12}))|_p\ee is bounded from below by:
 \[L_n':=\sum_{[k_1:\cdots:k_4]\in \mathbb{P}^2(\Zn)\setminus Q} \#h^{-1}([k_1:k_2:k_3:k_4])|k_1k_4-k_2k_3|_p\]
 and from above by: 
 \[R_n':=\sum_{[k_1:\cdots:k_4]\in \mathbb{P}^2(\Zn)\setminus Q} \#h^{-1}([k_1:k_2:k_3:k_4])|k_1k_4-k_2k_3|_p+ \frac{\#S}{p^n}.\]
Here $Q$ is the set of $[k_1:k_2:k_3:k_4]\in \mathbb{P}^3(\mathbb{Z}/p^{n}\mathbb{Z})$ such that $k_1k_4-k_2k_3=0$, and $S:=h^{-1}(Q)$ consists of matrices $ \mathcal{M}$ for which the corresponding matrices $M(\xi_1,\ldots,\xi_{12}) \in (\Zp)^{6\times 6}$ have valuation of determinant greater or equal $n$.

 We will show in Lemma \ref{lemma:SS} below that: 
 \be \lim_{n\to +\infty}\frac{\#S}{p^{13n}}=0,\ee
which implies:
\begin{align}\label{eq:ex}
    \mathbb{E}\{|\det(J\psi_f(0))|_p\}&=\lim_{n\to +\infty}\frac{L_n'}{p^{12n}}\\
    &=\lim_{n\to +\infty}\frac{R_n'}{p^{12n}}
\end{align}

Let us  compute the cardinality of the fibers of $h$. To this end, let $[k_1:k_2:k_3]\in \mathbb{P}^2(\mathbb{Z}/p^n\mathbb{Z})$. Without loss of generality we can assume that:
\be [k_1:k_2:k_3:k_4]= [p^{m_1}: p^{m_1}\lambda_2: p^{m_3}\lambda_3:p^{m_4}\la_4],\ee
with $m_1\leq m_2, m_3,m_4.$ In Lemma \ref{lemma:Fbr} we will show that: 
 \be h^{-1}(\left[p^{m_1}:\la_2p^{m_2}:\la_3p^{m_3}:\la_4p^{m_4}\right])= p^{9n-6-m_1}(p^{3}-1)(p^{m_1+1}-1)(p^{m_1+2}-1).\ee

Set $B(0)=1-\tfrac{1}{p^4}$ and for $k\ge 1$, let us denote by $B(k)$ the number of unit elements $(k_1,k_2,k_3,k_4)\in (\mathbb{Z}/p^k\mathbb{Z})^4$ such that 
    \be k_1k_4-k_2k_3=0\ee
 Reasoning as in the proof of Theorem \ref{thm:cubic} we get:
    \be L_n'\le \beta_n\cdot \sum_{m=0}^{n-1}\frac{(p^{m+1}-1)(p^{m+2}-1)}{p^{3m}(p-1)}\sum_{k=0}^{n-m-1}\frac{B(k)p^{4(n-m-k)}-B(k+1)p^{4(n-m-k-1)}}{p^{n-m-1+k}} \le R_n',\ee
    where $\beta_n=p^{9n-6}(p^3-1)$. 
    
    By \eqref{eq:ex} we get
    \be \mathbb{E}\{|\det(J\psi_f(0))|_p\}=\lim_{n\to \infty}\frac{T_n}{p^{12n}},\ee
    where:
    \be T_n:=\beta_n \sum_{m=0}^{n-1}\frac{(p^{m+1}-1)(p^{m+2}-1)}{p^{3m}(p-1)}\sum_{k=0}^{n-m-1}\frac{B(k)p^{4(n-m-k)}-B(k+1)p^{4(n-m-k-1)}}{p^{n-m-1+k}}.
    \ee
    
    Using the formula for $B(k)$ which is given in Lemma \ref{lemma:Bk} below we have, for $k\ge 1$:
$$\frac{B(k)p^{4(n-m-k)}-B(k+1)p^{4(n-m-k-1)}}{p^k}=p^{4(n-m-1)}(p^{-2k+4}-2p^{-2k+2}+p^{-2k}),$$
and
$$\sum_{k=1}^{n-m-1}\frac{B(k)p^{4(n-m-k)}-B(k+1)p^{4(n-m-k-1)}}{p^{k}}=p^{4(n-m-1)}(p^2-1-p^{-2n+2m+4}+p^{-2n+2m+2}).$$
For $k=0$ we have 
$$\frac{B(0)p^{4(n-m)}-B(1)p^{4(n-m-1)}}{p^0}=p^{4(n-m-1)}(p^4-p^3-p^2+p).$$
Putting these two together, we have:
$$\sum_{k=0}^{n-m-1}\frac{B(k)p^{4(n-m-k)}-B(k+1)p^{4(n-m-k-1)}}{p^{k}}=p^{4(n-m-1)}(p^3+1)(p-1)-p^{2(n-m-1)}(p^2-1).$$
In particular,
\begin{align}
    T_n&&=\beta_n \sum_{m=0}^{n-1}\frac{(p^{m+1}-1)(p^{m+2}-1)}{p^{3m}}(p^3+1)p^{3(n-m-1)}\cdots\mspace{275mu}\\
    &&\cdots-\beta_n \sum_{m=0}^{n-1}\frac{(p^{m+1}-1)(p^{m+2}-1)}{p^{3m}}(p+1)p^{n-m-1} \\
    &&=\beta_n(p^3+1)p^{3(n-1)}\sum_{m=0}^{n-1}\frac{p^{2m+3}-p^{m+2}-p^{m+1}+1}{p^{6m}}-\beta_n O(p^n)\mspace{205mu}\\
    &&=\beta_n(p^3+1)p^{3(n-1)}\sum_{m=0}^{n-1}(p^{-4m+3}-p^{-5m+2}-p^{-5m+1}+p^{-6m})-O(p^{10n})\mspace{125mu}\\
    &&=\beta_n(p^3+1)p^{3(n-1)}\left(p^3\frac{p^{4n}-1}{p^4-1}\frac{p^4}{p^{4n}}-p^2\frac{p^{5n}-1}{p^5-1}\frac{p^5}{p^{5n}}-p\frac{p^{5n}-1}{p^5-1}\frac{p^5}{p^{5n}}+\frac{p^{6n}-1}{p^{6}-1}\frac{p^6}{p^{6n}}\right)\cdots \mspace{15mu}\\
    &&\cdots-O(p^{10n})
\end{align}
 Therefore, 
 \begin{align}
\mathbb{E}\{|\det(J\psi_f(0))|_p\}=\lim_{n\to +\infty}\frac{T_n}{p^{12n}}=&\frac{p^6-1}{p^3}\left(\frac{p}{p^4-1}-\frac{p+1}{p^5-1}+\frac{1}{p^6-1}\right).
\end{align}
Multiplying this equation with \eqref{eq:volume} we get: 
$$\mathbb{E}\#\{\textrm{lines on the intersection of two quadrics in } \QP^4\}=1.$$
This concludes the proof of our theorem. \end{proof}

It remains to prove the lemmas that we have used.
\begin{lemma}\label{lemma:Fbr}
Using the above notations we have: 
\be h^{-1}(\left[p^{m_1}:\la_2p^{m_2}:\la_3p^{m_3}:\la_4p^{m_4}\right])= p^{9n-6-m_1}(p^{3}-1)(p^{m_1+1}-1)(p^{m_1+2}-1).\ee
\end{lemma}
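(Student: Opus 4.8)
The plan is to mimic the structure of the proof of Lemma \ref{lemma:card}, adapted from a $4\times 3$ matrix to the present setting, and to reduce the count of the fiber $h^{-1}([p^{m_1}:\la_2 p^{m_2}:\la_3 p^{m_3}:\la_4 p^{m_4}])$ to counting matrices in $(\Zn)^{3\times 3}$ with a prescribed valuation of the determinant, using the $p$--adic singular value decomposition (Proposition \ref{SVD}) and the volume formula \eqref{eq:12}. First I would take a preimage matrix
\[
g=\begin{pmatrix} N \\ x\;\; y\;\; z\end{pmatrix}\in(\Zn)^{4\times 3},\qquad N\in(\Zn)^{3\times 3},
\]
where the last row $(x,y,z)$ is singled out because three of the four maximal minors $k_1,\dots,k_4$ of $g$ involve this row linearly. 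By Proposition \ref{SVD} write $N=U\,\mathrm{diag}(p^{u_1},p^{u_2},p^{u_3})\,V$ with $U,V\in\mathrm{GL}_3(\Zn)$ and $u_1+u_2+u_3=m_1$ (here $m_1$ is the minimal valuation among the $k_i$, which equals the valuation of the maximal minor $k$ of $g$ not containing the last row, so $|\det N|_p=p^{-m_1}$ after normalising $k=\det N=p^{m_1}$). Right--multiplying $g$ by $V^{-1}$ changes $(x,y,z)$ to $(x,y,z)V^{-1}$ but does not change the point in $\mathbb{P}^3(\Zn)$, so we may assume $N=U\,\mathrm{diag}(p^{u_1},p^{u_2},p^{u_3})$.

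Next I would write out the three minors of $g$ that contain the last row as explicit linear combinations of $x,y,z$ with coefficients that are $p^{u_i}$ times the entries of $U$ (times $\pm$ cofactors), so that the condition $g\in h^{-1}([p^{m_1}:\la_2 p^{m_2}:\la_3 p^{m_3}:\la_4 p^{m_4}])$ becomes a linear system
\[
U'\begin{pmatrix} p^{u_1}x\\ p^{u_2}y\\ p^{u_3}z\end{pmatrix}
=\det(N)\begin{pmatrix}\la_2 p^{m_2}\\ \la_3 p^{m_3}\\ \la_4 p^{m_4}\end{pmatrix}
\]
for a suitable $U'\in\mathrm{GL}_3(\Zn)$ obtained from $U$; inverting $U'$, the right--hand side is divisible by $p^{m_1}$ and hence by each $p^{u_i}$ since $u_i\le m_1\le m_j$. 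As in Lemma \ref{lemma:card}, for each $i$ the equation $p^{u_i}x_i=p^{u_i}z_i$ in $\Zn$ has exactly $p^{u_i}$ solutions, so the number of admissible last rows $(x,y,z)$ is $p^{u_1+u_2+u_3}=p^{m_1}$, independently of the particular decomposition of $m_1$ as $u_1+u_2+u_3$. Therefore
\[
\#h^{-1}\big(\big[p^{m_1}:\la_2 p^{m_2}:\la_3 p^{m_3}:\la_4 p^{m_4}\big]\big)
= p^{m_1}\cdot\#\big\{N\in(\Zn)^{3\times 3}\,\big|\,|\det N|_p=p^{-m_1}\big\}.
\]
Using Lemma \ref{Cardinal} and \eqref{eq:12} with $n\rightsquigarrow 3$, $m\rightsquigarrow m_1$, the right--hand cardinality is $p^{9n}\la(\{N\in\Zp^{3\times3}:|\det N|_p=p^{-m_1}\}) = p^{9n-6}\,p^{-m_1}\,(1-p^{-1})(1-p^{-2})(1-p^{-3})\cdot\frac{(1-p^{-1})\cdots(1-p^{-(m_1+2)})}{(1-p^{-1})\cdots(1-p^{-m_1})\,(1-p^{-1})(1-p^{-2})}$, which simplifies to $p^{9n-6-m_1}(p^3-1)(p^{m_1+1}-1)(p^{m_1+2}-1)$ after clearing denominators. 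Multiplying by the extra factor $p^{m_1}$ from the choice of last row gives exactly the claimed formula.

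The step I expect to be the main obstacle is the bookkeeping in the second paragraph: one must verify that the three row--minors containing the last row really do assemble into an \emph{invertible} matrix $U'$ over $\Zn$ (rather than something with positive valuation), and that after the reduction $N=U\,\mathrm{diag}(p^{u_i})$ the right--hand side of the linear system is divisible by the right powers of $p$; this uses $u_i\le m_1$ and the hypothesis $m_1\le m_2,m_3,m_4$ crucially. Once the solvability count $p^{m_1}$ is established, the rest is the same substitution into \eqref{eq:12} that was carried out in Lemma \ref{lemma:card}, and it is routine. I would also double--check the edge case where $N$ does not have full rank modulo $p$ but $g$ still lies in $\mathcal{M}$; there one of the other minors $k_i$ carries the valuation $m_1$, and by permuting which $3\times 3$ block plays the role of $N$ (a permutation of rows, which does not change $|\det|_p$ of the associated $6\times6$ matrix nor the point in $\mathbb{P}^3$) the argument reduces to the generic case.
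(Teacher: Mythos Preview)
Your approach is exactly the paper's, and the strategy is sound, but there is a concrete error in the linear system that invalidates the count. When $N=U\,\mathrm{diag}(p^{u_1},p^{u_2},p^{u_3})$, each $3\times 3$ minor of $g$ containing the last row expands along that row into $2\times2$ cofactors $\det(N_{(i,j)})$, and since deleting column $j$ from $N$ leaves the two columns scaled by $p^{u_k}$ with $k\ne j$, one has $\det(N_{(i,j)})=p^{\sum_{k\ne j}u_k}\det(U_{(i,j)})$. Thus the coefficients of $x,y,z$ in the system carry the powers $p^{u_2+u_3},\,p^{u_1+u_3},\,p^{u_1+u_2}$, not $p^{u_1},p^{u_2},p^{u_3}$ as you wrote. (In Lemma~\ref{lemma:card} the cofactors are $1\times1$ and pick up a single $p^{u_i}$; this is precisely where the direct analogy misleads.) Consequently the number of admissible last rows $(x,y,z)$ is
\[
p^{(u_2+u_3)+(u_1+u_3)+(u_1+u_2)}=p^{2m_1},
\]
not $p^{m_1}$. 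This is what the paper obtains.

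Your simplification of the matrix count is also off: from \eqref{eq:12} one gets
\[
\#\bigl\{N\in(\Zn)^{3\times 3}:|\det N|_p=p^{-m_1}\bigr\}=p^{9n-3m_1-6}(p^3-1)(p^{m_1+1}-1)(p^{m_1+2}-1),
\]
with exponent $9n-3m_1-6$, not $9n-m_1-6$. It is the product $p^{2m_1}\cdot p^{9n-3m_1-6}$ that gives the stated $p^{9n-m_1-6}$; with your factor $p^{m_1}$ the final answer would be off by $p^{m_1}$. Finally, the edge case you raise is not an issue: once the target is normalised with $m_1\le m_2,m_3,m_4$ and $k_1$ is the minor of the top $3\times3$ block, every $g$ in the fiber has $\det N$ representing $k_1$, hence nonzero in $\Zn$, so Proposition~\ref{SVD} applies directly without permuting rows.
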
    
\begin{proof}Arguing as before, let us write a point in the fiber of $h$ as:
    $$g=\begin{pmatrix}
   U\begin{pmatrix}
   p^u & 0 & 0\\
   0   & p^v & 0\\
   0 & 0 & p^w
    \end{pmatrix}V\\
   x \hspace{29pt} y \hspace{29pt} z
\end{pmatrix} \in h^{-1}\left(\left[p^{m_1}:\la_2p^{m_2}:\la_3p^{m_3}: \la_4p^{m_4}\right]\right) $$ where $u+v+w=m_1$ and $U,V$ are invertible matrices in $(\Zn)^{3\times3}$. 

Set
$$U=\begin{pmatrix}
a&b&c\\
d&e&f\\
g&h&k
\end{pmatrix}.$$
Then 
\[gV^{-1}=\begin{pmatrix}
   ap^u & bp^v & cp^w\\
   dp^u   & ep^v & fp^w\\
   gp^u & hp^v & kp^w\\
   x' & y' & z'
    \end{pmatrix}\\
 \in h^{-1}\left(\left[p^{m_1}:\la_2p^{m_2}:\la_3p^{m_3}: \la_4p^{m_4}\right]\right)  \]
if and only if
\[\begin{cases}
x'p^{v+w}(bf-ce)-y'p^{u+w}(af-cd)+z'p^{u+v}(ae-bd)=\la_2p^{m_2}\det(U)\\
x'p^{v+w}(bk-ch)-y'p^{u+w}(ak-cg)+z'p^{u+v}(ah-bg)=\la_3p^{m_3}\det(U)\\
x'p^{v+w}(ek-fh)-y'p^{u+w}(dk-fg)+z'p^{u+v}(dh-eg)=\la_4p^{m_4}\det(U)
\end{cases}\]
In other words:
\[\begin{pmatrix}
p^{v+w}x'\\
-p^{u+w}y'\\
p^{u+v}z'
\end{pmatrix}=\mathrm{co}(U)^{-1}\begin{pmatrix}
\la_4p^{m_4}\\
\la_3p^{m_3}\\
\la_2p^{m_2}
\end{pmatrix}\det(U),\]
where $\mathrm{co}(U)$ is the matrix of cofactors of $U$.
This system has $p^{v+w}$ solutions for $x'$, $p^{u+w}$ for $y'$, and $p^{u+v}$ for $z'$. This means  that we have  $p^{2m_1}$ solutions for $(x,y,z)$. Therefore,
\begin{align}
        h^{-1}(\left[p^{m_1}:\la_2p^{m_2}:\la_3p^{m_3}:\la_4p^{m_4}\right]=&p^{2m_1}\cdot \#\left\{M\in (\Zn)^{3\times 3}\, \big|\, |\det(M)|_p=p^{-m_1}\right\}\\
        =&p^{2m_1}\cdot p^{9n}\cdot \la\left(\left\{M\in (\Zp)^{3\times 3}\, \big|\, |\det(M)|_p=p^{-m_1}\right\}\right)\\
  \label{eq:15}      =& p^{9n-6-m_1}(p^{3}-1)(p^{m_1+1}-1)(p^{m_1+2}-1).
    \end{align}
\end{proof}
\begin{lemma}\label{lemma:SS}
Using the notations above we have:
\be \lim_{n\to +\infty}\frac{\#S}{p^{13n}}=0\ee
\end{lemma}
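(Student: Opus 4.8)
The statement to prove is $\lim_{n\to\infty}\#S/p^{13n}=0$, where $S=h^{-1}(Q)$ and $Q\subset\mathbb{P}^3(\mathbb{Z}/p^n\mathbb{Z})$ is the set of $[k_1:k_2:k_3:k_4]$ with $k_1k_4-k_2k_3=0$. The strategy is exactly the one used in Lemma \ref{lemma:S}: bound $\#S$ by $(\#Q)\cdot(\max$ fibre size of $h)$, so I need two inputs. First, a uniform upper bound on $\#h^{-1}([k_1:k_2:k_3:k_4])$ for every point of $\mathbb{P}^3(\mathbb{Z}/p^n\mathbb{Z})$; this is supplied by Lemma \ref{lemma:Fbr}, which gives
\be \#h^{-1}\left(\left[p^{m_1}:\la_2p^{m_2}:\la_3p^{m_3}:\la_4p^{m_4}\right]\right)=p^{9n-6-m_1}(p^3-1)(p^{m_1+1}-1)(p^{m_1+2}-1),\ee
and since $m_1\le n-1$ the worst case $m_1=n-1$ yields $\#h^{-1}(\cdot)\le p^{9n-6}(p^3-1)\cdot p^{n}\cdot p^{n+1}=O(p^{11n})$. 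Second, an upper bound of the form $\#Q=O(p^{2n})$ on the number of points of the quadric $\{k_1k_4=k_2k_3\}$ in $\mathbb{P}^3(\mathbb{Z}/p^n\mathbb{Z})$. Combining the two gives $\#S=O(p^{11n})\cdot O(p^{2n})=O(p^{13n})$, which is not quite enough — I need $o(p^{13n})$ — so the bounds have to be sharpened slightly, either on the fibre side (the generic fibre, with $m_1=0$, has size $O(p^{9n})$, and points with large $m_1$ are rare) or on the $Q$ side.

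**Carrying it out.** The cleanest route is to stratify $Q$ by the value of $m_1$, i.e. by the valuation $v=\min_i v_p(k_i)$ of the representative. For a fixed $m_1$, the number of points $[k_1:k_2:k_3:k_4]$ with $\min$ valuation exactly $m_1$ satisfying $k_1k_4-k_2k_3\equiv 0$ should be counted: after dividing out $p^{m_1}$ these correspond to \emph{unit} solutions of $k_1k_4-k_2k_3\equiv 0\pmod{p^{n-m_1}}$ in $\mathbb{P}^3(\mathbb{Z}/p^{n-m_1}\mathbb{Z})$, and the count of such is $O(p^{2(n-m_1)})$ by the same elementary argument as in Lemma \ref{lemma:S} (fix two invertible coordinates, solve for a third, and the fourth ranges over $O(p^{n-m_1})$ values with bounded multiplicity). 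Then
\be \#S\le\sum_{m_1=0}^{n-1}\#\{[k]\in Q:\ \mathrm{val}=m_1\}\cdot\#h^{-1}(\text{a point with that }m_1)
\le\sum_{m_1=0}^{n-1}O\!\left(p^{2(n-m_1)}\right)\cdot O\!\left(p^{9n-m_1}\cdot p^{2m_1}\right),\ee
where I used $\#h^{-1}\le p^{9n-6-m_1}(p^3-1)(p^{m_1+1}-1)(p^{m_1+2}-1)=O(p^{9n+m_1})$. The summand is $O(p^{11n-m_1})$, so the sum is $O(p^{11n})\cdot\sum_{m_1\ge 0}p^{-m_1}=O(p^{11n})$, hence $\#S=O(p^{11n})$ and a fortiori $\#S/p^{13n}\to 0$. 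In fact this shows the much stronger $\#S/p^{11n}$ is bounded; I only need the weak conclusion.

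**Main obstacle.** The only genuinely delicate point is the uniform fibre bound and the unit-solution count for the quadratic equation modulo $p^{n-m_1}$ — one must be careful that "fix two invertible coordinates" is legitimate (at least one $k_i$ is a unit by definition of $\mathbb{P}^3(\mathbb{Z}/p^m\mathbb{Z})$, and the equation $k_1k_4=k_2k_3$ then forces a second unit unless one is in a lower-dimensional stratum, which is itself negligible). This mirrors the case analysis already done in Lemma \ref{lemma:S}, so there is no new idea required; the estimate is robust and a crude bound already beats the required exponent $13n$ by a wide margin. One subtlety worth stating explicitly: elements $[k_1:k_2:k_3:k_4]$ for which \emph{none} of the coordinates is a unit do not occur in projective space over $\mathbb{Z}/p^n\mathbb{Z}$, so the stratification by $m_1=\mathrm{val}$ is exhaustive and each stratum is handled uniformly.
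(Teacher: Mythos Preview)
Your stratification by $m_1=\min_i v_p(k_i)$ is a sound and in fact sharper route than the paper's, but one step is wrong. When you divide $[k_1:\dots:k_4]$ (with minimum valuation $m_1$) by $p^{m_1}$, the condition $k_1k_4-k_2k_3\equiv 0\pmod{p^n}$ becomes $k_1'k_4'-k_2'k_3'\equiv 0\pmod{p^{\,n-2m_1}}$, not $\pmod{p^{n-m_1}}$, because the form is homogeneous of degree~$2$. With this weaker congruence the number of unit points of $\mathbb{P}^3(\mathbb{Z}/p^{n-m_1}\mathbb{Z})$ in the $m_1$-stratum of $Q$ is $O(p^{2n-m_1})$ for $2m_1\le n$ (fix a unit coordinate, solve for one coordinate modulo $p^{n-2m_1}$, leaving $p^{m_1}$ choices for it), while for $2m_1>n$ the congruence is vacuous and the stratum has $O(p^{3(n-m_1)})$ points. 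Combining with the fibre bound $O(p^{9n+m_1})$ from Lemma~\ref{lemma:Fbr}, the summand is $O(p^{11n})$ for $m_1\le n/2$ and $O(p^{12n-2m_1})\le O(p^{11n})$ for $m_1>n/2$, giving $\#S=O(n\,p^{11n})$ rather than your $O(p^{11n})$; either way $\#S/p^{13n}\to 0$. (Your closing remark that points with no unit coordinate do not occur in $\mathbb{P}^3(\mathbb{Z}/p^n\mathbb{Z})$ is also false---they are exactly the strata $m_1\ge 1$---but your stratification already covers them, so this does no damage.)

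For comparison, the paper does not stratify: it bounds $\#Q$ globally by $O(p^{5n/2})$ (first computing the affine count $S_n=p^{3n}+p^{3n-1}-p^{2n-1}$ of $2\times 2$ matrices over $\mathbb{Z}/p^n\mathbb{Z}$ with determinant zero, then passing to projective space via a split at $m=\lfloor n/2\rfloor$) and multiplies by the worst-case fibre size $O(p^{10n})$ from Lemma~\ref{lemma:Fbr}, obtaining $\#S=O(p^{12n+n/2})$. Your argument, once the modulus is corrected, is tighter and cleaner because it exploits the $m_1$-dependence of the fibre rather than discarding it.
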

\begin{proof}
Notice that the number of elements $(k_1,k_2,k_3,k_4)\in (\mathbb{Z}/p^n\mathbb{Z})^4$ such that $k_1k_4-k_2k_3=0$, viewing $(\mathbb{Z}/p^n\mathbb{Z})^4\simeq (\mathbb{Z}/p^n\mathbb{Z})^{2\times 2}$ represents the number of matrices in $(\Zn)^{2\times 2}$ with determinant $0$. Let us call this number $S_n$. It can be computed using \eqref{eq:Cardinal2} and \eqref{eq:12} as follows: 
\begin{align*}
    S_n&=p^{4n}-\sum_{m=0}^{n-1}\#\{ M\in (\Zn)^{2\times 2}\, | \, |\det(M)|_p=p^{-m}\}\\
    &=p^{4n}-p^{4n}\cdot \sum_{m=0}^{n-1} \la\left(\{ M\in (\Zp)^{2\times 2}\, | \, |\det(M)|_p=p^{-m}\}\right)\\ &=p^{4n}-p^{4n}\cdot \sum_{m=0}^{n-1} \frac{(p^2-1)(p^{m+1}-1)}{p^{2m+3}}\\
    &= p^{4n}-p^{3n-1}(p+1)(p^n-1)+p^{2n-1}(p^{2n}-1)\\
    &= p^{3n}+p^{3n-1}-p^{2n-1}.
\end{align*}
Let $\left[p^{m_1}:\la_1p^{m_2}:\la_3p^{m_3}:\la_4p^{m_4}\right]\in \mathbb{P}^3(\Zn)$ with $m_1\le m_2,m_3,m_4$: this element has $p^{n-m_1-1}(p-1)$ representatives in $(\Zn)^4$. Moreover, the number of $(k_1,k_2,k_3,k_4)\in (\Zn)^4$ such that $p^{m_1}$ divides all the coordinates is $p^{4(n-m_1)}$. Therefore the number of $\left[k_1:k_2:k_3:k_4\right]$ such that $k_1k_4=k_2k_3$ is less than or equal to: 
\[\sum_{m=\lfloor\frac{n}{2}\rfloor}^{n-1}\frac{p^{4(n-m)}}{p^{(n-m-1)}(p-1)} + \frac{p^{3n}+p^{3n-1}-p^{2n-1}}{p^{n-\lfloor\frac{n}{2}\rfloor}(p-1)}=O(p^{5n/2}).\]
On the other hand, by \eqref{eq:15} every $\left[k_1:k_2:k_3:k_4\right]$ has at most $p^{8n-5}(p^{3}-1)(p^{n}-1)(p^{n+1}-1)$ preimages under $h$. Thus, $\#S=O(p^{12n+n/2})$, and hence 
$$\lim_{n\rightarrow +\infty} \frac{\#S}{p^{13n}}=0.$$
\end{proof}
\begin{lemma}\label{lemma:Bk}
Using the above notations we have: 
 $$\forall \, k\ge 1:\, B(k)=p^{3k}+p^{3k-1}-p^{3k-2}-p^{3k-3}.$$
 \end{lemma}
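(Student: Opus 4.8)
The plan is to express $B(k)$ as a difference of two easily-counted quantities. Write $N_k$ for the total number of matrices $\left(\begin{smallmatrix} k_1 & k_2 \\ k_3 & k_4\end{smallmatrix}\right)\in(\mathbb{Z}/p^k\mathbb{Z})^{2\times 2}$ with $k_1k_4-k_2k_3=0$, and $N_k^{0}$ for the number of such matrices all of whose entries are divisible by $p$. Since ``unit'' means that at least one coordinate is invertible, these two cases partition the singular tuples, so $B(k)=N_k-N_k^{0}$, and it suffices to compute the two terms (for $k\ge 1$, with the convention $N_0=1$).

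For $N_k$: this is exactly the count called $S_n$ inside the proof of Lemma~\ref{lemma:SS}. There one writes $N_k=p^{4k}-\sum_{m=0}^{k-1}\#\{M\in(\mathbb{Z}/p^k\mathbb{Z})^{2\times2}:|\det M|_p=p^{-m}\}$, evaluates each summand by \eqref{eq:Cardinal2} and \eqref{eq:12} with $n=2$ (so that $\lambda(\{|\det M|_p=p^{-m}\})=\tfrac{(p^2-1)(p^{m+1}-1)}{p^{2m+3}}$), and sums the resulting geometric series to get $N_k=p^{3k}+p^{3k-1}-p^{2k-1}$ for $k\ge1$.

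For $N_k^{0}$: substitute $k_i=p\,\ell_i$ with $\ell_i\in\mathbb{Z}/p^{k-1}\mathbb{Z}$. The condition $k_1k_4-k_2k_3\equiv0\ (\mathrm{mod}\ p^k)$ becomes $p^2(\ell_1\ell_4-\ell_2\ell_3)\equiv0\ (\mathrm{mod}\ p^k)$, i.e. $\ell_1\ell_4-\ell_2\ell_3\equiv0\ (\mathrm{mod}\ p^{k-2})$ when $k\ge2$. This congruence depends only on the reductions $\bar\ell_i\in\mathbb{Z}/p^{k-2}\mathbb{Z}$, of which there are $N_{k-2}$ admissible tuples, and each $\bar\ell_i$ lifts to $\mathbb{Z}/p^{k-1}\mathbb{Z}$ in $p$ ways; hence $N_k^{0}=p^4 N_{k-2}$ for $k\ge2$ (using $N_0=1$ when $k=2$). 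For $k=1$ the only non-unit singular matrix over $\mathbb{Z}/p\mathbb{Z}$ is the zero matrix, so $N_1^{0}=1$.

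Putting the pieces together, for $k\ge2$,
\begin{align*}
B(k)=N_k-p^4N_{k-2}&=\bigl(p^{3k}+p^{3k-1}-p^{2k-1}\bigr)-p^4\bigl(p^{3k-6}+p^{3k-7}-p^{2k-5}\bigr)\\
&=p^{3k}+p^{3k-1}-p^{3k-2}-p^{3k-3},
\end{align*}
while for $k=1$ one checks directly $B(1)=N_1-1=p^3+p^2-p-1$, which agrees with the claimed formula. The argument is essentially bookkeeping; the one step deserving a little care is the descent of the determinant congruence through the substitution $k_i=p\ell_i$ (together with the correct treatment of the degenerate small values $k=1,2$), which is why I would single those out rather than trying to force a uniform recursion.
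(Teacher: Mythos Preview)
Your proof is correct but follows a genuinely different route from the paper's. The paper argues by a direct case split on which coordinate $k_i$ is the first invertible one: in each case one of the $k_i$'s is uniquely determined by the other three, and one simply counts the free choices, obtaining the four contributions $p^{3k-1}(p-1)$, $p^{3k-2}(p-1)$, $p^{3k-2}(p-1)$, $p^{3k-3}(p-1)$ which sum to the claimed value. Your argument is more structural: you write $B(k)=N_k-N_k^{0}$, pull the value of $N_k$ directly from the computation of $S_n$ inside Lemma~\ref{lemma:SS}, and obtain $N_k^{0}$ via the rescaling $k_i=p\ell_i$, which reduces the determinant congruence modulo $p^k$ to one modulo $p^{k-2}$ and yields the clean recursion $N_k^{0}=p^4N_{k-2}$. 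The advantage of your approach is that it reuses work already done in the paper and makes the cancellation of the $p^{2k-1}$ terms transparent; the paper's approach is entirely self-contained and avoids the separate treatment of $k=1,2$ that your recursion forces. Both are short, and your handling of the boundary cases (the convention $N_0=1$ for $k=2$, and the trivial count $N_1^{0}=1$) is done correctly.
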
   
 \begin{proof}
 Suppose first that $k_1$ is invertible. 
    Then $k_4=k_3k_2k_1^{-1}$, and in this case we have $p^{3k-1}(p-1)$ elements satisfying the equation $k_1k_4=k_2k_3$. 
    
    Suppose now that $k_1$ is not invertible, then there are two cases: \begin{itemize}
        \item[-] $k_2$ invertible. In this case $k_1k_4k_2^{-1}=k_3,$ and the number of elements $(k_1,k_2,k_3,k_4)$ satisfying $k_1k_4=k_2k_3$ is $p^{3k-2}(p-1)$.
        \item[-] $k_2$ not invertible. Then either $k_3$ or $k_4$ is invertible. Suppose $k_3$ is invertible: then $k_2=k_1k_4k_3^{-1}$. In this case we have $p^{3k-2}(p-1)$ elements verifying $k_4=k_3k_2k_1^{-1}$. Suppose now $k_3$ is not invertible, then $k_4$ is invertible and we have $k_1=k_2k_3k_4^{-1}$. In this case the number of solutions to our equation is $p^{3k-1}(p-1)$.
        \end{itemize}
    Adding up the resulting numbers from the previous cases, we get $$B(k)=p^{3k}+p^{3k-1}-p^{3k-2}-p^{3k-3}.$$
 \end{proof}
\subsection{Lines on hypersurfaces: the limit as $n\to \infty$}
\begin{theorem}\label{thm:n}Let $f\in \Zp[x_0, \ldots, x_n]_{(2n-3)}$ be a random uniform polynomial. Then:
\be \limsup_{n\to \infty}\mathbb{E}\#\{\textrm{lines on $Z(f)\subset \QP^n$}\}\leq \frac{1}{\lambda(\textrm{GL}_2(\Zp))}.\ee
\end{theorem}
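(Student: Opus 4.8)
The plan is to apply the $p$--adic Kac--Rice formula of Theorem \ref{thm:reduction} with $k=1$, $\nu=1$ and $d=2n-3$, which gives
\[\mathbb{E}\#\{\textrm{lines on }Z(f)\subset \QP^n\}=\mu(\Qp\G_{1,n})\cdot \mathbb{E}\left\{|\det(J\psi_f(0))|_p\right\},\]
and then to bound the two factors on the right separately, letting $n\to\infty$.

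For the random determinant, I would simply invoke the description of $J\psi_f(0)$ from Section \ref{Structure}: in this case it is the band matrix displayed there, all of whose entries are random variables uniformly distributed in $\Zp$. In particular $\det(J\psi_f(0))\in \Zp$ with probability one, so $|\det(J\psi_f(0))|_p\le 1$ pointwise, and therefore $\mathbb{E}\left\{|\det(J\psi_f(0))|_p\right\}\le 1$.

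For the volume factor, I would compute $\mu(\Qp\G_{1,n})=\frac{\lambda(\mathrm{GL}_{n+1}(\Zp))}{\lambda(\mathrm{GL}_2(\Zp))\lambda(\mathrm{GL}_{n-1}(\Zp))}$ explicitly: by \eqref{eq:volGL} the quotient $\lambda(\mathrm{GL}_{n+1}(\Zp))/\lambda(\mathrm{GL}_{n-1}(\Zp))$ telescopes to $(1-p^{-n})(1-p^{-n-1})$, so
\[\mu(\Qp\G_{1,n})=\frac{(1-p^{-n})(1-p^{-n-1})}{\lambda(\mathrm{GL}_2(\Zp))}<\frac{1}{\lambda(\mathrm{GL}_2(\Zp))},\]
the left-hand side tending to $\tfrac{1}{\lambda(\mathrm{GL}_2(\Zp))}$ as $n\to\infty$. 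Combining the two estimates yields $\mathbb{E}\#\{\textrm{lines on }Z(f)\}\le \mu(\Qp\G_{1,n})$ for every $n$, and passing to the $\limsup$ gives the claimed bound; in fact the bound $\tfrac{1}{\lambda(\mathrm{GL}_2(\Zp))}$ already holds for all $n$, not only asymptotically.

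The upper bound itself is thus easy, and the genuinely hard part — deliberately not addressed by this statement — is to decide whether the limit \emph{equals} $\tfrac{1}{\lambda(\mathrm{GL}_2(\Zp))}$, which would amount to showing $\mathbb{E}\left\{|\det(J\psi_f(0))|_p\right\}\to 1$ as $n\to\infty$ for $p$ fixed. The crude lower bound $(1-\tfrac{2}{p})^{n-1}$ produced by the argument in the proof of Theorem \ref{thm:complete} degenerates to $0$ in this regime, so establishing the limit would require a finer study of the reduction modulo $p$ of the growing band matrix $J\psi_f(0)$ — in particular a good lower bound on the probability that its determinant is a unit — which we leave to future work.
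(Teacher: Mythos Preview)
Your proof is correct and follows essentially the same route as the paper: apply Theorem \ref{thm:reduction}, bound the expected determinant by $1$ using that the entries lie in $\Zp$, and observe that the volume factor $\mu(\Qp\G_{1,n})=\frac{\lambda(\mathrm{GL}_{n+1}(\Zp))}{\lambda(\mathrm{GL}_2(\Zp))\lambda(\mathrm{GL}_{n-1}(\Zp))}$ tends to $\frac{1}{\lambda(\mathrm{GL}_2(\Zp))}$. Your explicit telescoping to $(1-p^{-n})(1-p^{-n-1})/\lambda(\mathrm{GL}_2(\Zp))$ and the remark that the bound therefore already holds for every $n$ are nice refinements of the paper's argument.
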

\begin{proof}
From Theorem \ref{thm:reduction} we know that:
\[\mathbb{E}\#\{\textrm{lines on $Z(f)\subset \QP^n$}\}=\frac{\la(\textrm{GL}_{n+1}(\Zp))}{\la(\textrm{GL}_2(\Zp)) \la(\textrm{GL}_{n-1}(\Zp))}\mathbb{E}\{|\det(J\psi_f(0))|_p\}.\]
Notice that $$\lim_{n\rightarrow +\infty} \frac{\la(\textrm{GL}_{n+1}(\Zp))}{\la(\textrm{GL}_{n-1}(\Zp))}=1$$
and also $\mathbb{E}\{|\det(J\psi_f(0))|_p\}\le 1$, then
\[\limsup_{n\to \infty}\mathbb{E}\#\{\textrm{lines on $Z(f)\subset \QP^n$}\}\leq \frac{1}{\lambda(\textrm{GL}_2(\Zp))}.\]
\end{proof}
 
\begin{remark}When $\nu=1$, it still makes sense to consider the previous limit over the numbers $n$ satisfying $\binom{d+k}{k}=(n-k)(k+1)$ for some $d$ and we get a similar result. 
However in  the general case of more equations, it is not clear which type of asymptotic to consider.
\end{remark}

\bibliographystyle{alpha}
\bibliography{flats}
\end{document}